\definecolor{shade}{gray}{0.8}
        {
          \raggedright
        \setlength{\rightmargin}{\leftmargin}
        \setlength{\itemsep}{-12pt}
        \setlength{\parsep}{20pt}
        \begin{lrbox}{\@tempboxa}%
        \begin{minipage}{\linewidth-2\fboxsep}
        }%
        {
        \end{minipage}%
        \end{lrbox}%
        \fcolorbox{black}{shade}{\usebox{\@tempboxa}}\newline
        }%
\newtheorem{theorem}{Theorem}
\newtheorem{lemma}{Lemma}
\renewcommand{\d}{{\rm d}}
\renewcommand{\eqref}[1]{\hyperref[#1]{(\ref*{#1})}}
\newcommand{\dd}{\mathrm{d}}
\newcommand*{\norm}[1]{\lVert #1 \rVert}
\newtheorem{remark}{Remark}
\newtheorem{definition}{Definition}
\renewcommand{\ln}{\log}
\newcommand*{\pref}[1]{\hyperref[#1]{(\ref*{#1})}}
\newcommand*{\refpref}[2]{\hyperref[#2]{\ref*{#1}(\ref*{#2})}}
\newcommand{\N}{\mathbb{N}}
\newcommand{\sT}{\mathtt{T}}
\newcommand{\sP}{\mathtt{P}}
\newcommand{\sN}{\mathtt{N}}
\newcommand{\sG}{\mathtt{G}}
\newcommand{\sV}{\mathtt{V}}
\newcommand{\sW}{\mathtt{W}}
\newcommand{\sv}{\mathtt{v}}
\newcommand{\su}{\mathtt{u}}
\newcommand{\sA}{\mathtt{A}}
\newcommand{\sm}{\mathtt{m}}
\newcommand{\sF}{\mathtt{F}}
\newcommand{\bP}{\mathtt{P}}
\newcommand{\sJ}{\mathtt{J}}
\newcommand{\V}{\mathbb{V}}
\definecolor{amethyst}{rgb}{0.6, 0.4, 0.8}
\definecolor{applegreen}{rgb}{0.55, 0.71, 0.0}
\definecolor{aqua}{rgb}{0.0, 1.0, 1.0}
\definecolor{asparagus}{rgb}{0.53, 0.66, 0.42}
\definecolor{amber(sae/ece)}{rgb}{1.0, 0.49, 0.0}
 	\definecolor{armygreen}{rgb}{0.29, 0.33, 0.13}
	\definecolor{shitbrown}{rgb}{0.43, 0.21, 0.1}
	\definecolor{brightpink}{rgb}{1.0, 0.0, 0.5}
	\definecolor{brightube}{rgb}{0.82, 0.62, 0.91}
	 	\definecolor{byzantine}{rgb}{0.74, 0.2, 0.64}
		\definecolor{chartreuse(web)}{rgb}{0.5, 1.0, 0.0}
\title{Stability of (sub)critical non-local spatial  branching processes with and without immigration}
\author{ Emma Horton\thanks{
Department of Statistics
University of Warwick
Coventry
CV4 7AL, UK. E-mail: \texttt{ \{emma.horton\}, \{andreas.kyprianou\} @warwick.ac.uk}
}, \ Andreas E. Kyprianou$^*$, \ Pedro Mart\'in-Ch\'avez\thanks{Department of Mathematics, University of Extremadura, Avenida de Elvas s/n 06006 Badajoz, Spain. E-mail: \texttt{pedromc@unex.es}. ORCID: \href{https://orcid.org/0000-0001-5530-3138}{0000-0001-5530-3138}}, \\ Ellen Powell\thanks{Department of Mathematical Sciences, Durham University Upper Mountjoy Campus, Stockton Rd, Durham DH1 3LE. Email \texttt{ellen.g.powell@durham.ac.uk}}, \ Victor Rivero\thanks{CIMAT A. C., Calle Jalisco s/n, Col. Valenciana, A. P. 402, C.P. 36000, Guanajuato, Gto., Mexico. Email \texttt{rivero@cimat.mx}}
}
\newcommand{\bra}[1]{\ensuremath{\left[ #1\right] }}
\newcommand{\proint}[2]{\ensuremath{\left\langle #1,#2\right\rangle}}
\begin{document}

\maketitle
\begin{abstract}
\noindent {We consider the setting of either a general non-local branching particle process or a general non-local superprocess, in both cases, with and without immigration. Under the assumption that the mean semigroup has a Perron-Frobenious type behaviour for the immigrated mass, as well as the existence of second moments,  we consider necessary and sufficient conditions that ensure limiting distributional stability.  More precisely, our first main contribution pertains to proving the asymptotic Kolmogorov survival probability and Yaglom limit for critical non-local branching particle systems {\it and} superprocesses under a second moment assumption on the offspring distribution. Our results improve on existing literature by removing the requirement of bounded offspring in the particle setting \cite{HHKW} and generalising \cite{RSZ} to allow for non-local branching mechanisms. Our second main contribution pertains to the stability of both critical and sub-critical non-local branching particle systems and superprocesses with immigration. At criticality, we show that the scaled process converges to a Gamma distribution under a necessary and sufficient integral test. At subcriticality we show stability of the process, also subject to an integral test.
In these cases, our results complement classical results for (continuous-time) Galton--Watson processes with immigration and continuous-state branching processes with immigration; see \cite{Heathcote1965, Pinsky, Quine, Seva1957, Yang}, among others. In the setting of superprocesses, the only work we know of at this level of generality is summarised in \cite{ZL11}.
The proofs of our results, both with and without immigration, appeal to similar technical approaches and accordingly, we include the results together in this paper. 
}

\medskip

\noindent {\bf Key words:} Branching Markov process, superprocess, immigration, distributional stability.
\medskip

\noindent {\bf MSC 2020:}  60J80, 60J25.
\end{abstract}

\section{Introduction}
In this article, we revisit foundational results concerning the stability of branching processes with and without immigration. 
In essence, our objective is to show that, qualitatively speaking, several of the classical results for Galton--Watson processes (with and without immigration) are universal truths in the setting of general branching Markov processes and superprocessses. 

In what follows, we focus on critical or subcritical processes. In the setting of Galton--Watson processes, the notion of criticality is dictated by the mean number of offspring. In the general setting we present in this work, the notion of criticality pertains to the value of an assumed lead eigenvalue for the mean semigroup. 

The first main focus of this paper pertains to critical processes without immigration. In this case, we are interested in the extent to which the so-called Kolmogorov and Yaglom limits for discrete-time Galton--Watson processes are still an inherent behaviour at generality. The Kolmogorov limit stipulates that the decay of the survival probability at criticality is inversely proportional  to time (interpreted  as either real-time or generation number). The Yaglom limit asserts that, conditional on survival, the current population normalised by time, converges to an exponential random variable with rate that is written explicitly in terms of the model parameters. In both the Kolmogorov and Yaglom limits, second moments of the offspring distribution are needed in the classical setting. 

Improving on recent work in this domain for general non-local branching Markov processes \cite{HHKW} and  superprocesses \cite{RSZ}, we prove both of these results under a second moment assumption on the offspring distribution. For non-local branching Markov processes, this builds on \cite{HHKW}, where a bound on the  number of offspring was required. In the setting of superprocesses, we accommodate for non-local branching mechanisms, where previous works have only allowed local branching \cite{RSZ}.

The third and fourth main results of this article concern critical and subcritical processes {\it with} immigration. Returning to the Galton--Watson setting, let us consider the case where we have i.i.d. immigration in each generation, with each immigrant spawning an independent copy of the underlying Galton--Watson process. If $f(s) = \mathcal{E}[s^N]$, $s\in[0,1]$ is the probability generating function of the offspring distribution of the typical family size, $N$, for the Galton--Watson dynamics and $g(s) = \Tilde{\mathcal{E}}[s^{\Tilde{N}}]$, $s\in[0,1]$, is the probability generating function of the distribution of the number of immigrants, $\tilde{N}$, in each generation,  then it is known (see \cite{AN, FW, Heathcote1965, Seneta1968, Pakes1971}) that the total population converges in distribution if, and only if, the process is not supercritical, i.e. $\mathcal{E}[N]   \leq 1$, and
\begin{equation}
\int_0^1 \frac{1-g(s)}{f(s)-s}\dd s <\infty.
\label{classicalintegral}
\end{equation}
In the subcritical setting, the integral \eqref{classicalintegral} is equivalent to the requirement that $\tilde{\mathcal{E}}[\log(1+\tilde{N})]<\infty$. In the critical setting, although the integral \eqref{classicalintegral} fails, it is still possible to demonstrate that the process with immigration when scaled by time converges to a gamma distribution {(see \cite{thesisFOSTER,FW,Pakes-1971,Seneta1970})}.

Again, we develop analogous results in the general framework of non-local branching Markov processes and non-local superprocesses with immigration.
In the former case, we believe our results to be the first of their kind at this level of generality. For non-local superprocesses, our results complement those in Chapter 9 of \cite{ZL11}. Indeed, in the setting of independent immigration, at subcriticality, we introduce an integral test which seems not to have been noted previously. At criticality we are able to provide the natural analogue of scaled convergence of the population  to a gamma distribution, which also appears to be new for superprocesses. As in the first two results, we also work under a second moment assumption on the offspring distribution.

It turns out that there is a natural reason to consider the results with and without immigration together. Indeed, a fundamental feature of the analysis in both cases pertains to how the asymptotic behaviour of the non-linear semigroup of the underlying branching process behaves in relation to its linear semigroup. In particular, the insistence of second moments throughout leads to the use of a {\color{black} second order} Taylor approximation in all cases. 

\section{Non-local spatial branching processes}
Let us spend some time describing the general setting in which we wish to work.
Let $E$ be a Lusin space. Throughout, will write $B(E)$ for the Banach space of bounded measurable functions on $E$ with norm $\norm{\cdot}$, $B^{+}(E)$ for the space of non-negative bounded measurable functions on $E$ and $B^{+}_1(E)$ for the subset of functions in $B^{+}(E)$ that are uniformly bounded by unity. We are interested in spatial branching processes that are defined in terms of a Markov process and a branching {\color{black}mechanism}, whether that be a branching particle system or a superprocess. We characterise Markov processes by a semigroup on $E$, denoted by $\sP=(\sP_t, t\geq0)$. Unless otherwise stated, we do not need $\bP$ to have the Feller property, and it is not necessary that $\bP$ is conservative. Indeed, in the case where it is non-conservative, we can append  a cemetery state $\{\dagger\}$ to $ E$, which is to be treated as an absorbing state, and regard $\bP$ as conservative on the extended space $E\cup\{\dagger\}$, which can also be treated as a Lusin space.
However, we must then alter the definition of $B(E)$ (and accordingly $B^+(E)$ and $B^+_1(E)$) to ensure that any function $f\in B(E)$ satisfies $f(\dagger)=0$.

\subsection{Non-local Branching Markov Processes}
Consider now a spatial branching process in which, given their point of creation, particles evolve independently according to a $\sP$-Markov process. In an event, which we refer to as `branching', particles positioned at $x$ die at rate {\color{black}$\beta(x)$, where }$\beta\in B^+(E)$, and instantaneously, new particles are created in $E$ according to a point process. The configurations of these offspring are described by the random counting measure
\[
\mathcal{Z}(A) = \sum_{i = 1}^N \delta_{x_i}( A), 
\]
for Borel subsets $A$ of $E$, {\color{black} for which we also assume that $\sup_{x\in E}\mathcal{E}_x[N]<\infty$.} The law of the aforementioned point process depends on $x$, the point of death of the parent, and we denote it by $\mathcal{P}_x$, $x\in E$, with associated expectation operator given by $\mathcal{E}_x$, $x\in E$.  This information is captured in the so-called branching mechanism
\begin{equation*}
  \sG[g](x) :=  \beta(x)\mathcal{E}_x\left[\prod_{i = 1}^N g(x_i) - g(x)\right], \qquad x\in E,
  \label{linearG}
\end{equation*}
where $ g\in B^+_1(E)$. 
Without loss of generality, we can assume that $\mathcal{P}_x(N =1) = 0$ for all $x\in E$ by viewing a branching event with one offspring as an extra jump in the motion. On the other hand, we do allow for the possibility that $\mathcal{P}_x(N =0)>0$ for some or all $x\in E$. 
\medskip

Henceforth we refer to this spatial branching process as a $(\sP, \sG)$-branching Markov process (or $(\sP, \sG)$-BMP for short). It is well known that if the configuration of particles at time $t$ is denoted by $\{x_1(t), \ldots, x_{N_t}(t)\}$, then, on the event that the process has not become extinct or exploded, the branching Markov process can be described as the co-ordinate process $X= (X_t, t\geq0)$, given by
\[
X_t (\cdot) = \sum_{i =1}^{N_t}\delta_{x_i(t)}(\cdot), \qquad t\geq0,
\]
evolving in the space of {\color{black} counting} measures on $E$ with {\color{black} finite} total mass, which we denote $N(E)$.
In particular, $X$
is Markovian in  $N(E)$. Its probabilities will be denoted $\mathbb{P}: = (\mathbb{P}_\mu, \mu\in N(E))$. 
Sometimes we will write $X^{(\mu)}$ to signify that we are considering $X$ under $\mathbb{P}_\mu$, that is to say, $X^{(\mu)}_0 =\mu$.
For convenience, we will write for any measure $\mu \in N(E)$ and function $f\in B^+(E)$,
\[
\langle f, \mu\rangle = \int_E f(x)\mu(\dd x).
\]
In particular, 
\[
\langle f, X_t\rangle= \sum_{i = 1}^{N_t} f(x_i(t)), \qquad f\in B^+(E).
\]

With this notation in hand, it is worth noting that the independence that is inherent in the definition of the Markov branching property  implies that, if we define, 
\begin{equation*}
{\rm e}^{-\sv_t[f](x)} = \mathbb{E}_{\delta_x}\left[{\rm e}^{-\langle f, X_t\rangle}\right], \qquad t\geq 0, \, f\in B^+(E),\, x\in E,
\label{nonlin}
\end{equation*}
then for $\mu\in N(E)$, 
we have
\begin{equation}
\label{MBP}
\mathbb{E}_{\mu}\left[{\rm e}^{-\langle f, X_t\rangle}\right] = {\rm e}^{-\langle\sv_t[f], \mu\rangle }, \qquad t\geq 0.
\end{equation}
Moreover, for $f\in B^+(E)$ and  $x\in E$, 
\begin{equation}
{\rm e}^{-\sv_t[f](x)} = \sP_t[{\rm e}^{-f}](x) + \int_0^t \sP_s\left[ \sG[{\rm e}^{-\sv_{t-s}[f]}]\right](x)\d s, \qquad t\geq0.
\label{nonlinv}
\end{equation}
 The above equation describes the evolution of the semigroup $\sv_t[\cdot]$ in terms of the action of  transport and branching. That is, either the initial particle has not branched {\color{black}and undergone a Markov transition (including the possibility of being  absorbed)} by time $t$ or at some time $s \le t$, the initial particle has branched, producing offspring according to $\sG$. We refer the reader to~\cite{SNTE-I, SNTE-II} for a proof.

Branching Markov processes enjoy a very long history in the literature, dating back as far as the late 1950s, \cite{Seva1958, Seva1961, Sk1964, INW1, INW2, INW3}, with a broad base of literature that is arguably too voluminous to give a fair summary of here. Most literature focuses on the setting of local branching. This corresponds to the setting that all offspring are positioned at their parent's point of death (i.e. $x_i = x$ in the definition of $\sG$). In that case, the branching mechanism reduces to 
\[
  \sG[s](x) = \beta(x)\left[\sum_{k =0}^\infty p_{k}(x)s^k - s\right], \qquad x\in E,
\]
where $s\in[0,1]$ and $(p_k(x), k\geq 0)$ is the offspring distribution when a parent branches at site $x\in E$.  
The branching mechanism  $\sG$ may otherwise be seen, in general, as a mixture of  local and non-local branching.

We want to introduce a variant of the model that includes immigration, where the new particles can arrive into the system from an external source. These arrival times, at which immigration events occur, are determined by a homogeneous Poisson process with rate $\alpha$. At each arrival time, a random number of particles, $\Tilde{N}$, is added to the system at locations $y_1,\ldots,y_{\Tilde{N}}$ in $E$. {\color{black}The latter} can be summarised by another random counting measure 
\begin{equation}\label{immigr-counting-measure}
    \Tilde{\mathcal{Z}}(\cdot) = \sum_{i=1}^{\Tilde{N}} \delta_{y_i} (\cdot).
\end{equation}
The corresponding law, $\Tilde{\mathcal{P}}$, is independent of the state of the system and its expectation is denoted by $\Tilde{\mathcal{E}}$. Similarly to before, this can be succinctly described by the immigration mechanism
\begin{equation*}
    \mathtt{H}[f] = \alpha \Tilde{\mathcal{E}} \left[1- {\rm e}^{-\langle f, \tilde{\mathcal Z}\rangle}\right], \qquad f\in B^+(E),
\end{equation*}
where we assume $\Tilde{\mathcal{P}}(\Tilde{N}=0)=0$, i.e. we always have at least one immigrant at the arrival times. 

Once immigrants are embedded in the system, they evolve according to the same rules as independent copies of the branching Markov process, initiated from their point of arrival.

\begin{definition}[Non-local branching Markov process with immigration]\label{def-BMPI}
We say that  $Y^{(\mu)}=(Y^{(\mu)}_t, t\geq0)$ is a $(\mathtt{P},\mathtt{G})$-branching Markov process with $\mathtt{H}$-immigration (or a $(\mathtt{P},\mathtt{G},\mathtt{H})$-BMPI for short) with initial mass $\mu\in N(E)$, if 
\begin{equation}
Y^{(\mu)}_t = X^{(\mu)}_t  + \sum_{j=1}^{D_t} X_{t-\tau_j}^{(\Tilde{\mathcal{Z}}_j)}, \qquad t\geq0,
\label{BMPI}
\end{equation}
where $(D_t,t\geq 0)$ is the homogeneous Poisson process with rate $\alpha$, $\tau_j$ is $j$-th arrival time and $\{\Tilde{\mathcal{Z}}_j, j\in\mathbb{N}\}$ are i.i.d. copies of $\Tilde{\mathcal{Z}}$. Moreover, given $(\Tilde{\mathcal{Z}}_j, j = 1, \cdots, D_t)$, the processes $ X^{(\Tilde{\mathcal{Z}}_j)}$ are independent copies of $X^{(\mu)}$ issued from the respective measures $\mu = \Tilde{\mathcal{Z}}_j$.
The probabilities of $Y^{(\mu)}$ are also denoted by $\mathbb{P}_{\mu}$, $\mu\in N(E)$.
\end{definition}

\subsection{Non-local Superprocesses} 
Superprocesses can be thought of as the high-density limit of a sequence of branching Markov processes, resulting in a new family of measure-valued Markov processes; see e.g.  \cite{ZL11, Dawson1993, Wat1968, Dynkin2, dawson2002nonlocal}. Just as branching Markov processes are Markovian in $N(E)$,  the former are Markovian in $M(E)$, the space of finite Borel measures on $E$ equipped with the {\color{black} topology of weak convergence}.  
There is a broad literature  for superprocesses, e.g. \cite{ZL11, Dawson1993, Wat1968, Alison, Janos}, with so-called local branching mechanisms, which has been  broadened to the more general setting of non-local branching mechanisms in \cite{dawson2002nonlocal, ZL11}. Let us now introduce these concepts with a self-contained  definition of what we mean by a non-local superprocess (although the reader will note that we largely conform to the presentation in \cite{ZL11}).

\medskip

A Markov process $X : = (X_t:t\geq 0)$ with state space $M(E)$  and probabilities $\mathbb{P} := (\mathbb{P}_\mu, \mu\in M(E))$ is called a $(\sP,\psi,\phi)$-superprocess  (or $(\sP,\psi,\phi)$-SP for short) if it has  semigroup $(\sV_t, t\geq 0)$ on $M(E)$ satisfying 
\begin{equation}\label{non-local-transition-semigroup}
\mathbb{E}_\mu\big[{\rm e}^{-\langle{f},{X_t}\rangle}\big] ={\rm e}^{-\langle{\sV}_t[f], \mu\rangle}, \qquad \mu\in M(E), f\in B^{+}(E),
\end{equation}
where  $({\sV}_t, t\geq 0)$ is   characterised as the {\color{black} minimal non-negative   solution} of the evolution equation
\begin{equation}\label{non-local-evolution-equation}
{\sV}_t[f](x)=\sP_t[f](x)-\int_{0}^{t}\sP_s\big[\psi(\cdot,{\sV}_{t-s}[f](\cdot))+\phi(\cdot,{\sV}_{t-s}[f])\big](x)\d s.
\end{equation}
Here $\psi$ denotes the local branching mechanism
\begin{equation}\label{local-branching-mechanism}
\psi(x,\lambda)=-b(x) \lambda + c(x)\lambda^2 +\int_{0}^\infty( {\rm e}^{-\lambda y}-1+\lambda y)\nu(x,\d y),\;\;\;\lambda\geq 0,
\end{equation}
where $b\in B(E)$, $c\in B^+(E)$ and $(x\wedge x^2)\nu(x, \d y)$ is a {\color{black} uniformly (for $x\in E$)} bounded kernel from $E$ to $(0,\infty)$,
and $\phi$ is the non-local branching mechanism
\begin{equation*}\label{non-local-branching-mechanism}
\phi(x,f)=\beta(x)(f(x)-\eta(x,f)),
\end{equation*}
where $\beta\in B^+(E)$ and $\eta$ has representation
\begin{equation*}\label{non-local-zeta-representation}
\eta(x,f)=\gamma(x,f)+\int_{M(E)^{\circ}}(1-{\rm e}^{-\langle{f},{\nu}\rangle})\Gamma(x,\d \nu),
\end{equation*}
such that $\gamma(x,f)$ is a {\color{black}uniformly} bounded function on $E\times B^+(E)$ and $\langle{1},{\nu}\rangle\Gamma(x,\d \nu)$ is a  {\color{black} uniformly (for $x\in E$)} bounded kernel from $E$ to $M(E)^{\circ}:=M(E)\backslash \{0\}$ with
\begin{equation*}
\gamma(x,f)+\int_{M(E)^{\circ}}\langle{1},{\nu}\rangle\Gamma(x,\d \nu)\leq 1.
\end{equation*}
We refer the reader to \cite{dawson2002nonlocal, PY} for more details regarding the above formulae. 
Lemma 3.1 in \cite{dawson2002nonlocal} tells us that the functional $\eta(x,f)$ has the following equivalent representation
\begin{equation}\label{zeta-representation}
\eta(x,f)=\int_{M_0(E)}\bigg[\delta_\eta(x,\pi)\langle{f},{\pi}\rangle+\int_{0}^{\infty}(1-{\rm e}^{-u\langle{f},{\pi}\rangle})n_\eta(x,\pi,\d u)\bigg]P_\eta(x,\d\pi),
\end{equation}
where $M_0(E)$ denotes the set of probability measures on $E$, $P_\eta(x,\d\pi)$ is a probability kernel from $E$ to $M_0(E)$, $\delta_\eta\geq 0$ is a bounded function on $E\times M_0(E)$, and $un_\eta(x,\pi,\d u)$ is a bounded kernel from $E\times M_0(E)$ to $(0,\infty)$ with
\begin{equation*}
\delta_\eta(x,\pi)+\int_{0}^{\infty}un_\eta(x,\pi,\d u)\leq 1.
\end{equation*}

The reader will note that we have deliberately used some of the same notation for both branching Markov processes and  superprocesses. In the sequel there should be no confusion and the motivation for this choice of repeated notation is that our main results are indifferent to which of the two processes we are talking about.
\medskip

Let us now define what we mean by a $(\sP,\psi,\phi)$-superprocess with immigration. In order to do so, we need to introduce two objects, the first of which is the excursion measure for the $(\sP,\psi,\phi)$-superprocess. 
It is known, see \cite{DK04} or Chapter 8 of \cite{ZL11}, that a measure  $\mathbb{Q}_x$ exists on the space {\color{black}$\mathbb{D} =\mathbb{D}([0,\infty)\times \mathcal{M}(E))$} which satisfies
\[
\mathbb{Q}_x\left(1-{\rm e}^{-\langle f, X_t\rangle}\right)=\sV_t[f](x),
\]
for $x\in E,\;t\geq 0$ and $f\in B^+(E)$. The second object is the immigration mechanism, which we define, for $f\in B^+(E)$, via
\begin{equation}\label{chi}
\chi [f]= \langle f, \upsilon\rangle +\int_{M(E)^{\circ}}(1-{\rm e}^{-\langle{f},{\nu}\rangle})\Upsilon(\d \nu),
\end{equation}
where $\upsilon\in M(E)$ and $(1\wedge\langle 1, \nu\rangle)\Upsilon(\d \nu)$ is a finite measure on $M(E)^{\circ}$. 
As above, Lemma 3.1 of \cite{dawson2002nonlocal} enforces the necessity of the decomposition
\begin{equation*}\label{ups-representation}
\chi [f]=\int_{M_0(E)}\bigg[\delta_\chi(\pi)\langle{f},{\pi}\rangle+\int_{0}^{\infty}(1-{\rm e}^{-u\langle{f},{\pi}\rangle})n_\chi(\pi,\d u)\bigg]P_\chi(\d\pi),
\end{equation*}
where $\delta_\chi\geq 0$ is a bounded function on  $M_0(E)$, $un_\chi(\pi,\d u)$ is a bounded kernel from $M_0(E)$ to $(0,\infty)$ and $P_\chi(\d\pi)$ is a probability   on $M_0(E)$.

\begin{definition}[Non-local superprocess with immigration]
We say that  $Y^{(\mu)}=(Y^{(\mu)}_t, t\geq0)$ is a $(\sP,\psi,\phi)$-superprocess with $\chi$-immigration (or a $(\sP,\psi,\phi,\chi)$-SPI for short) with initial mass $\mu\in M(E)$, if 
\begin{equation}
Y^{(\mu)}_t = X^{(\mu)}_t  + \int_0^t \int_{\mathbb{D}} X_{t-s}{\sN}(\dd s, \dd X), \qquad t\geq0,
\label{SPI}
\end{equation}
where   ${\sN}(\dd s, \dd X)$ is Poisson random measure on $[0,\infty)\times \mathbb{D}$ with  intensity

\[
\int_{M_0(E)}P_\chi(\dd\pi)
\left(\delta_\chi(\pi)\int_{E}\pi(\dd y)\mathbb{Q}_{y}(\dd X) + {\int_0^\infty}n_\chi(  \pi, \dd u)\mathbb{P}_{u\pi}(\dd X)\right)\dd s.
\]
We also write $\mathbb{P} = (\mathbb{P}_{\mu}, \mu\in M(E))$ for the probabilities of $Y^{(\mu)}$.
\end{definition}

We note that similar constructions for SPI processes can be found in  \cite{ZL2001, ZL2021}.

\section{Assumptions and main results}
Before stating our results, we first introduce some assumptions that will be crucial in analysing the models defined above. Unless a specific difference is indicated, the assumptions apply both to the setting that $X$ is either a non-local branching Markov process or a non-local superprocess.

\medskip

\noindent{\bf {(H1)}:}
We assume second moments
\begin{equation*}
  \sup_{x\in E} \mathcal{E}_x \left[N^2\right]<\infty \qquad \text{and} \qquad \sup_{x\in E} \left(\int_0^\infty y^2 \nu(x,\d y) + 
  \int_{M(E)^\circ} \langle 1, \nu\rangle^2  \Gamma(x,\d \nu)\right)<\infty.
\label{kmomsup}
\end{equation*}
Assumption {(H1)} allows us to define, for $f\in B^+(E)$
\begin{equation*}
    \mathbb{V}[f](x) = \beta(x)\mathcal{E}_x\left[\sum_{i, j= 1;\, i\neq j}^N f(x_i)f(x_j)\right], \qquad x\in E,
\end{equation*}
in the branching Markov process setting or with
\begin{align}\label{VVdef}
\mathbb{V}[f](x)  &= \psi''(x,0+)f(x)^2+ \beta(x)\int_{M(E)^\circ} \langle f, \nu\rangle^2 \Gamma(x,\d\nu),\\ 
&= \left(2c(x) +  \int_{0}^\infty y^2 \nu (x, \d y)\right)f(x)^2 + \beta(x)\int_{M(E)^\circ} \langle f, \nu\rangle^2 \Gamma(x,\d\nu),
\end{align}
in the superprocess setting. 

\medskip

\noindent{\bf {(H2)}:} There exist a constant $\lambda \leq 0$, a function $\varphi \in B^+(E)$ and finite measure {$\tilde\varphi\in M(E)$} such that, for $f\in B^+(E)$,
\[
\langle \sT_t[\varphi] , \mu\rangle = {\rm e}^{\lambda t}\langle{\varphi},{\mu}\rangle 
\text{ and } 
\langle{\sT_t[f] },{\tilde\varphi} \rangle= {\rm e}^{\lambda t}\proint{f}{\tilde\varphi},
\]
for all $\mu\in N(E)$ (resp. $M(E)$) if $(X, \mathbb{P})$ is a branching Markov process (resp. a superprocess),  where
    \[
        \langle\mathtt{T}_t[f], \mu\rangle = \int_{E}\mu(\dd x)\mathbb{E}_{\delta_x} \left[\langle f, X_t\rangle\right]= \mathbb{E}_{\mu} \left[\langle f, X_t\rangle\right], \qquad t\geq 0.
    \]
Further let us define
 \begin{equation}
 \Delta_t = \sup_{x\in E,\, f\in B^+_1(E)}|\varphi(x)^{-1}{\rm e}^{-\lambda t}\sT_t\bra{f}(x)-\proint{f}{\tilde{\varphi}}| , \qquad t\geq 0.
\label{Deltat}
 \end{equation}
 We suppose that $\Delta := \sup_{t\geq 0}\Delta_t <\infty$ and
\begin{equation*}
 \Delta_t=O({\rm e}^{-\varepsilon t})\text{ as }t\to\infty\text{ for some $\varepsilon>0$}. 
\label{T1ass}
\end{equation*}
{Without loss of generality, we conveniently impose the normalisation $\langle\varphi,\tilde{\varphi}\rangle=1$.}
 
\begin{remark}\rm
    The non-local spatial branching process is known as critical (resp. subcritical) when $\lambda=0$ (resp. $\lambda<0$). Without restriction on the sign of $\lambda$, assumption {(H2)} has been recently {\color{black}named} (see \cite{Horton2023}) the Asmussen-Hering class of branching processes, {\color{black}acknowledging  foundational results for this class in \cite{AH1, AH2, AH3}.}
\end{remark}

 \noindent{\bf {(H3)}:}  For each $x\in E$ 
\[
\mathbb{P}_{\delta_x}(\zeta < \infty) = 1,
\]
where $\zeta = \inf \{t>0 : \langle 1, X_t\rangle = 0\}.$

\medskip

\noindent{\bf {(H4)}:}\label{H2} There exist constants $K> 0$ and $M>0$ such that for all $f \in B^+(E)$,  
    \begin{equation*}
      \langle \mathbb{V}_M[f], \tilde\varphi \rangle \ge K \langle  {f}, \tilde\varphi\rangle^2,
      \label{weirdmixing}
    \end{equation*}
where $\mathbb{V}_M$ is defined by
    \[
        \mathbb{V}_M[f](x) = \beta(x)\mathcal{E}_x\left[\sum_{i, j= 1;\, i\neq j}^N f(x_i)f(x_j)\mathbf{1}_{\{N\leq M\}}\right], \qquad x\in E,
    \]
for  branching Markov processes and by
    \begin{align*}
        \mathbb{V}_M[f](x)  &= \left(2c(x) +  \int_{0}^\infty y^2 \mathbf{1}_{\{y\leq M\}} \nu (x, \d y)\right)f(x)^2\notag\\
        &\hspace{2cm} + \beta(x)\int_{M(E)^\circ} \langle f, \nu\rangle^2 \mathbf{1}_{\{\langle 1, \nu\rangle\leq M\}}\Gamma(x,\d\nu),\qquad x\in E,
    \end{align*}
for superprocesses. 

Notice that in both cases, $\mathbb{V}[f] = \lim_{M\to\infty} \mathbb{V}_M[f]$ by monotone convergence.

\medskip

We are now ready to state our main results. The reader will note that the results are stated for both branching Markov processes and superprocesses simultaneously. Moreover, as alluded to in the introduction, the proofs of these results, whether with or without immigration, are similar in spirit and methodology. {\color{black} Accordingly, as the reader will see, we therefore opt to give all proofs in the BMP setting.}

The first two results pertain to the critical system, i.e. when $\lambda = 0$ in {(H2)}, and when there is no immigration. In particular, we show that the Kolmogorov survival probability asymptotic holds, as well as the Yaglom limit. In essence these results  support the notion of universality of the exponential distribution for the asymptotic law of $\langle f, X_t\rangle/t$ conditional on survival as $t\to\infty$. 

\begin{theorem}[Kolmogorov survival probability at criticality]\label{thm:survival}
Suppose that $(X, \mathbb{P})$ is a $(\sP, \sG)$-BMP (resp. a $(\mathtt{P},\psi,\phi)$-SP) satisfying {(H1)}--{(H4)} with $\lambda=0$.  Then, for all $\mu\in N(E)$ (resp. $\mu\in M(E)$),
\begin{equation*}
 \lim_{t \to\infty} t\mathbb{P}_{\mu} (\zeta>t) =  \frac{2\langle \varphi, \mu\rangle }{\langle \mathbb{V}[\varphi],\tilde\varphi\rangle}.
 \label{eq:Kolmogorov}
 \end{equation*}
\end{theorem}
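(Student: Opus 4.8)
## Proof proposal for Theorem 1 (Kolmogorov survival probability)

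The plan is to work with the nonlinear semigroup $\sv_t$ applied to a large multiple of the constant function, or more precisely to track $u_t(x) := \sv_t[\theta](x)$ for suitable test inputs $\theta$ and extract the survival probability as a limit. Since $\mathbb{P}_\mu(\zeta > t) = 1 - \mathbb{E}_\mu[\mathbf{1}_{\{X_t = 0\}}]$ and, by \eqref{MBP}, $\mathbb{E}_\mu[{\rm e}^{-\langle \theta, X_t\rangle}] = {\rm e}^{-\langle \sv_t[\theta], \mu\rangle}$, letting $\theta \to \infty$ (i.e. taking $\theta = n\mathbf{1}$ and $n\to\infty$) gives $\mathbb{P}_\mu(\zeta > t) = 1 - {\rm e}^{-\langle w_t, \mu\rangle}$ where $w_t(x) = \lim_{n\to\infty}\sv_t[n\mathbf{1}](x) = -\log \mathbb{P}_{\delta_x}(\zeta > t)$. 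First I would establish, via (H3) and monotonicity, that $w_t(x)\downarrow 0$ uniformly as $t\to\infty$, so that asymptotically $\langle w_t,\mu\rangle$ is small and $\mathbb{P}_\mu(\zeta>t)\sim \langle w_t,\mu\rangle$; hence it suffices to prove $t\langle w_t,\mu\rangle \to 2\langle\varphi,\mu\rangle/\langle\mathbb{V}[\varphi],\tilde\varphi\rangle$.

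The core of the argument is a second-order analysis of the evolution equation \eqref{nonlinv} for $w_t$. Writing $\sG[{\rm e}^{-w}] = -\sA[w] + \text{(quadratic remainder)}$, where $\sA$ is the linear operator whose semigroup is $\sT_t$ (so $\sP_t + \int_0^t \sP_s \sA[\cdot]\,\d s$ generates $\sT_t$), the equation for $w_t$ becomes, heuristically,
\begin{equation*}
w_t = \sT_t[w_0] - \int_0^t \sT_s\big[ R[w_{t-s}]\big]\,\d s,
\end{equation*}
with $w_0 = \infty$ handled by a limiting procedure, and $R[w] \approx \tfrac12 \mathbb{V}[w]$ to leading order by a Taylor expansion justified under (H1). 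Pairing against $\tilde\varphi$ and using (H2) (with $\lambda = 0$, so $\langle \sT_s[f],\tilde\varphi\rangle = \langle f,\tilde\varphi\rangle$), one gets a scalar renewal-type relation
\begin{equation*}
\langle w_t, \tilde\varphi\rangle = \langle w_0,\tilde\varphi\rangle - \tfrac12\int_0^t \langle \mathbb{V}[w_{t-s}],\tilde\varphi\rangle\,\d s + (\text{errors}),
\end{equation*}
and then one argues that $w_t$ is asymptotically proportional to $\varphi$: write $w_t = a_t(\varphi + o(1))$ with $a_t := \langle w_t,\tilde\varphi\rangle \to 0$. Substituting and using $\langle\mathbb{V}[w_{t-s}],\tilde\varphi\rangle \approx a_{t-s}^2 \langle\mathbb{V}[\varphi],\tilde\varphi\rangle$ yields $a_t' \approx -\tfrac12 \langle\mathbb{V}[\varphi],\tilde\varphi\rangle\, a_t^2$, whose solution behaves like $a_t \sim 2/(t\langle\mathbb{V}[\varphi],\tilde\varphi\rangle)$. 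Finally, promoting $\langle w_t,\tilde\varphi\rangle \sim 2/(t\langle\mathbb{V}[\varphi],\tilde\varphi\rangle)$ to $w_t(x) \sim 2\varphi(x)/(t\langle\mathbb{V}[\varphi],\tilde\varphi\rangle)$ uniformly in $x$ (using the spectral gap $\Delta_t = O({\rm e}^{-\varepsilon t})$ from (H2)) gives $t\langle w_t,\mu\rangle \to 2\langle\varphi,\mu\rangle/\langle\mathbb{V}[\varphi],\tilde\varphi\rangle$, which is the claim.

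I expect the main obstacle to be making the second-order Taylor approximation of $\sG[{\rm e}^{-w}]$ rigorous and uniform when the input $w_0$ is effectively infinite — one cannot simply linearise around $0$ at time $t=0$. The standard fix is a two-stage bootstrap: first prove a crude bound $w_t(x) \le C/t$ (and a matching lower bound $w_t(x)\ge c/t$) by comparison arguments exploiting (H4), which forces genuine quadratic decay; then, once $w_t$ is known to be uniformly small for $t\ge t_0$, restart the evolution equation from time $t_0$ where the input $w_{t_0}$ is bounded, and carry out the Taylor expansion cleanly on $[t_0,\infty)$. Here (H4) is exactly what is needed to prevent the nonlinearity from degenerating (it gives the lower bound $\langle\mathbb{V}_M[f],\tilde\varphi\rangle \ge K\langle f,\tilde\varphi\rangle^2$ with the truncation $\mathbb{V}_M$ controlling the second-moment contributions), while (H1) controls the error terms and (H2)'s spectral gap upgrades scalar asymptotics to uniform function-valued asymptotics. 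Controlling the interchange of the $n\to\infty$ limit (removing the cutoff $\theta = n\mathbf{1}$) with the $t\to\infty$ asymptotics is a further technical point, handled by monotone convergence and the uniform bounds.
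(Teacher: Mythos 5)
Your overall architecture is the same as the paper's: relate the nonlinear object to the linear semigroup through an evolution equation of the form \eqref{eq:nonlin-lin}, project against $\tilde\varphi$ and use $\lambda=0$ to get a scalar equation for $a_t$, Taylor-expand the nonlinearity so that its leading term is $\tfrac12\mathbb{V}$, establish coarse two-sided bounds of order $1/t$, run the ODE comparison $a_t'\approx-\tfrac12\langle\mathbb{V}[\varphi],\tilde\varphi\rangle a_t^2$, and use the spectral gap in (H2) to upgrade the scalar asymptotic to a uniform statement in $x$. One cosmetic difference: the paper tracks $\su_t(x)=\mathbb{P}_{\delta_x}(\zeta>t)=\su_t[\mathbf{0}](x)$ directly, which satisfies \eqref{eq:nonlin-lin} with bounded data $\sT_t[1]$, so the issue you flag about ``$w_0=\infty$'' never arises in the particle setting and no cutoff $n\mathbf{1}$ or restart at $t_0$ is needed (that device is essentially what the paper does in the superprocess case).

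There is, however, one genuine gap: you attribute the matching \emph{lower} bound $w_t\ge c\varphi/t$ (equivalently $a_t\ge c/t$) to ``comparison arguments exploiting (H4)'', but (H4) acts in the opposite direction. Since (H4) bounds the quadratic term from below, it yields $a_t'\le -Ca_t^2$ and hence the \emph{upper} bound $a_t\le C/t$ (Lemma \ref{lem: unif-small}); it cannot produce a lower bound. The lower bound requires a separate ingredient: in the paper (Lemma \ref{lem: a-lowerbd}) it comes from the change of measure \eqref{E:COM} by the critical martingale $\langle\varphi,X_t\rangle/\langle\varphi,\mu\rangle$, Jensen's inequality, and the moment estimate $\mathbb{E}^{\varphi}_{\delta_x}[\langle\varphi,X_t\rangle]\le Ct$ from \cite{GHK} (a second-moment consequence of (H1)--(H2)); equivalently one can view it as a Paley--Zygmund type bound. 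This step is not decorative: the lower bound $a_s\ge C/s$ is what controls the factors $a_s^{-2}$ in the error integrals and converts $\sup_x|\su_t(x)/\varphi(x)-a_t|=O(t^{-2})$ into the multiplicative statement $\su_t\sim a_t\varphi$, so the refinement stage does not close without it. Note also that trying to extract the lower bound from the ODE in the reverse direction, via $a_t'\ge-\tfrac12\langle\mathbb{V}[\su_t],\tilde\varphi\rangle$ and $\|\mathbb{V}[\su_t]\|\lesssim\|\su_t\|^2$, needs $\|\su_t\|\lesssim a_t$ up to exponentially small errors, which is itself only useful once a polynomial lower bound on $a_t$ is known; the probabilistic argument is precisely what breaks this circularity. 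A minor further point: uniform smallness of $\su_t$ uses (H2) (through $\|\su_{t+s}\|\le a_s\|\varphi\|+O({\rm e}^{-\varepsilon t})$) and not (H3) alone.
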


\medskip

\begin{theorem}[Yaglom limit at criticality]\label{thm:Yaglom}
Suppose that $(X, \mathbb{P})$ is a $(\sP, \sG)$-BMP (resp. a $(\mathtt{P},\psi,\phi)$-SP) satisfying {(H1)}--{(H4)} with $\lambda=0$. 
Then, for all $\mu\in N(E)$ (resp. $\mu\in M(E)$),
\begin{equation*}
\lim_{t \to \infty}\mathbb E_{\mu}\left[ {\rm exp}\left(-\theta\frac{\langle f, X_t\rangle}{t} \right) \Big|\, \langle 1, X_t\rangle >0 \right] = \frac{1}{1 +  \theta\tfrac12 \langle  f, \tilde\varphi\rangle \langle  \mathbb V[\varphi], \tilde\varphi \rangle},
\label{eq:Yaglom}
\end{equation*}
where $\theta\geq 0$ and $f\in B^+(E)$.
\end{theorem}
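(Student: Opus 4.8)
\textbf{Proof plan for Theorem~\ref{thm:Yaglom}.}
The plan is to derive the Laplace transform of $\langle f, X_t\rangle/t$ conditional on survival directly from the nonlinear semigroup identity \eqref{MBP}. Write $\theta_t = \theta/t$ and $f_t = \theta_t f \in B^+(E)$. The conditional Laplace transform we want is
\begin{equation*}
\mathbb{E}_\mu\!\left[{\rm e}^{-\theta\langle f, X_t\rangle/t}\,\big|\,\langle 1, X_t\rangle>0\right]
= 1 - \frac{\mathbb{E}_\mu\!\left[1-{\rm e}^{-\langle f_t, X_t\rangle}\right]}{\mathbb{P}_\mu(\zeta>t)}
= 1 - \frac{1-{\rm e}^{-\langle \sv_t[f_t],\mu\rangle}}{\mathbb{P}_\mu(\zeta>t)}.
\end{equation*}
Since $\langle \sv_t[f_t],\mu\rangle \to 0$ as $t\to\infty$ (the total mass vanishes on survival, which decays like $1/t$), the numerator is asymptotically $\langle \sv_t[f_t],\mu\rangle$, and by Theorem~\ref{thm:survival} the denominator is $\sim 2\langle\varphi,\mu\rangle/(t\langle\mathbb{V}[\varphi],\tilde\varphi\rangle)$. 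So everything reduces to a sharp asymptotic for $t\langle \sv_t[\theta_t f],\mu\rangle$ as $t\to\infty$, and the claimed limit is equivalent to
\begin{equation*}
\lim_{t\to\infty} t\,\langle \sv_t[\theta_t f],\mu\rangle
= \frac{2\langle\varphi,\mu\rangle}{\langle\mathbb{V}[\varphi],\tilde\varphi\rangle}\cdot
\frac{\theta\tfrac12\langle f,\tilde\varphi\rangle\langle\mathbb{V}[\varphi],\tilde\varphi\rangle}{1+\theta\tfrac12\langle f,\tilde\varphi\rangle\langle\mathbb{V}[\varphi],\tilde\varphi\rangle}.
\end{equation*}

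The core of the argument is therefore an analysis of the nonlinear evolution equation \eqref{nonlinv}, which (rewriting in the additive form for $\sv_t$ rather than ${\rm e}^{-\sv_t}$) reads schematically $\sv_t[g] = \sT_t[g] - \int_0^t \sT_s[\text{(quadratic remainder in }\sv_{t-s}[g])]\,{\rm d}s$, where the linear part $\sT_t$ is controlled by (H2): $\langle\sT_t[g],\mu\rangle = {\rm e}^{\lambda t}\langle g,\mu\rangle\cdot(1+o(1))$ with $\lambda=0$, and the error is governed by $\Delta_t = O({\rm e}^{-\varepsilon t})$. First I would record the second-order Taylor expansion of the branching mechanism: for small $g$, $\sG[{\rm e}^{-g}] = -\beta\mathcal{E}_\cdot[\sum g(x_i)] + \tfrac12\mathbb{V}[g] + \text{(cubic)}$, so that $\sv_t[g]$ satisfies, to leading order, $\langle\sv_t[g],\mu\rangle = \langle\sT_t[g],\mu\rangle - \tfrac12\int_0^t \langle\sT_s[\mathbb{V}[\sv_{t-s}[g]]],\mu\rangle\,{\rm d}s + (\text{higher order})$. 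Applying (H2) to the inner products and using $\langle\sv_s[g],\tilde\varphi\rangle$-type quantities, the equation for $u_t := \langle\sv_t[\theta_t f],\tilde\varphi\rangle$ (or rather for $\varphi$-weighted versions) becomes asymptotically the ODE $\dot u = -\tfrac12\langle\mathbb{V}[\varphi],\tilde\varphi\rangle u^2$ with initial size $u_0 \approx \theta_t\langle f,\tilde\varphi\rangle = (\theta/t)\langle f,\tilde\varphi\rangle$; solving gives $u_t \approx \frac{\theta\langle f,\tilde\varphi\rangle/t}{1+\tfrac12\langle\mathbb{V}[\varphi],\tilde\varphi\rangle\theta\langle f,\tilde\varphi\rangle}$, which is exactly what is needed after multiplying by $t$. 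This is the same computation that underlies Theorem~\ref{thm:survival} (take $f$ large / $\theta\to\infty$ there), so I would organise the proof to reuse those estimates: in particular the a priori bounds $\langle\sv_t[\theta_t f],\mu\rangle \le C/t$ and the uniform smallness of $\sv_t[\theta_t f]$ that let one replace the full branching mechanism by its quadratic part with negligible error, and that let one replace $\sv_{t-s}[\theta_t f]$ inside the integral by the appropriate rescaled eigenfunction profile $\frac{c}{t-s}\varphi$ plus a controlled error via $\Delta_s$.

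The main obstacle, as in the Kolmogorov/Yaglom literature, is making the $1/t$-rescaling rigorous in the integral term: one must show that $\sv_{t-s}[\theta_t f]$, suitably normalised, converges to a multiple of $\varphi$ uniformly enough in $s\in[0,t]$ that $\int_0^t\langle\sT_s[\mathbb{V}[\sv_{t-s}[\theta_t f]]],\mu\rangle\,{\rm d}s$ has the right limit — the difficulty being the boundary regions $s$ near $0$ and $s$ near $t$, where either the $\sT_s$-approximation from (H2) is not yet accurate (small $s$) or $\sv_{t-s}$ has not yet settled onto the $\varphi$-profile (small $t-s$). The standard remedy is a bootstrap: first prove the crude two-sided bound $c_1/t \le \langle\sv_t[\theta_t f],\varphi\rangle \cdot(\dots) \le c_2/t$ using (H4) for the lower bound (this is exactly where the nondegeneracy $\langle\mathbb{V}_M[f],\tilde\varphi\rangle\ge K\langle f,\tilde\varphi\rangle^2$ is used) and (H1) for the upper bound, then feed this back into the evolution equation to upgrade to the sharp asymptotic, handling the remainder terms with the cubic Taylor bound and $\Delta=\sup_t\Delta_t<\infty$. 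Throughout, (H3) guarantees $\mathbb{P}_\mu(\zeta>t)\to 0$ so that the conditioning is well-defined for all large $t$ and the leading-order replacement of the numerator by $\langle\sv_t[\theta_t f],\mu\rangle$ is legitimate. Since the excerpt instructs that all proofs are given in the BMP setting, I would carry out the argument with $\sG$, $N$ and $\mathbb{V}[f](x)=\beta(x)\mathcal{E}_x[\sum_{i\neq j}f(x_i)f(x_j)]$, the superprocess case being identical after substituting the corresponding $\psi,\phi$ expansions.
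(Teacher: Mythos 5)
Your overall mechanism is essentially the paper's: rewrite the conditional Laplace transform as $1-\mathbb{E}_\mu[1-{\rm e}^{-\theta\langle f,X_t\rangle/t}]/\mathbb{P}_\mu(\zeta>t)$, take the denominator from Theorem \ref{thm:survival}, and get the numerator from the Riccati-type identity for $s\mapsto 1/a_s[g]$ driven by the second-order Taylor expansion of the branching mechanism, reusing the Kolmogorov estimates (Lemmas \ref{lem: a-lowerbd}, \ref{L:Gprops}, \ref{lem: iter}) with time-dependent initial data of size $1/t$; your claimed equivalent asymptotic for $t\langle\sv_t[\theta f/t],\mu\rangle$ is correct. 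The genuine gap is in running this directly for arbitrary $f\in B^+(E)$. Every error term in the $1/a_s$ analysis is divided by $a_s[{\rm e}^{-\theta f/t}]^2$, and the only available lower bound, Lemma \ref{lem: a-lowerbd}, controls $a_s[g]$ by $\bigl(\mathbb{E}^{\varphi}_{\delta_x}[\langle\varphi,X_s\rangle]+\sup_{y}\varphi(y)/\log(1/g(y))\bigr)^{-1}$; with $g={\rm e}^{-\theta f/t}$ this is of order $1/t$ uniformly in $s\le t$ only when $\sup_y \varphi(y)/f(y)<\infty$. If $f$ vanishes somewhere, or is not bounded below relative to $\varphi$, the bound degenerates and your bootstrap does not close. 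The paper sidesteps this entirely: it first writes $f=\langle f,\tilde\varphi\rangle\varphi+\tilde f$, kills the $\tilde f$-component conditional on survival by Chebyshev together with the second-moment asymptotics of \cite{GHK} and Slutsky's theorem, and then runs the $1/a_s[g_t]$ analysis only for $g_t={\rm e}^{-\tilde\theta\varphi/t}$, for which $\sup_y\varphi(y)/\log(1/g_t(y))=t/\tilde\theta$ with no positivity assumption on $f$. Alternatively one could restrict to $\inf_E f>0$ and remove the restriction by an approximation/continuity step, as the paper does in Step 5 of the proof of Theorem \ref{thm-gamma-dist}; but some such device must be supplied, and your plan contains none.

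A smaller inaccuracy: you have the hypotheses' roles reversed in the "crude two-sided bound". The nondegeneracy (H4) is what yields the \emph{upper} bound $a_t\le C/t$ in the Kolmogorov argument (via $a_t'\le -Ca_t^2$), while the \emph{lower} bound of order $1/t$ comes from Jensen's inequality under the $\varphi$-martingale change of measure \eqref{E:COM} combined with the linear growth of $\mathbb{E}^{\varphi}_{\delta_x}[\langle\varphi,X_t\rangle]$ from \cite{GHK}. Moreover, in the Yaglom analysis itself the initial data is already of size $1/t$, so the upper bound $\su_s[g_t]\le C\varphi/t$ is immediate from Jensen and (H2); (H4) enters only through the use of Theorem \ref{thm:survival} for the denominator. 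Once the treatment of general $f$ is added and this attribution corrected, your plan coincides with the paper's proof.
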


\medskip


We now state the two results that provide conditions for the stability of the system {\it with} immigration at both criticality and subcriticality. The reader should note that Theorems \ref{thm-gamma-dist} and \ref{subcrit} do not assume hypotheses {(H3)} and {(H4)}. {\color{black}  In the spirit of the  setting without immigration, Theorem \ref{thm-gamma-dist}   supports the notion of universality of the gamma distribution for the asymptotic law of $\langle f, Y_t\rangle/t$ conditional on survival as $t\to\infty$.}

\begin{theorem}[Stability at criticality]\label{thm-gamma-dist}
Suppose that $(Y, \mathbb{P})$ is a $(\mathtt{P}, \mathtt{G},\mathtt{H})$-BMPI, resp. a $(\mathtt{P},\psi,\phi,\chi)$-SPI, satisfying  {{(H1)}} and {{(H2)}} with $\lambda=0$. 
 Then, for every $f\in B^+(E)$, the random variable $\langle f, Y_t\rangle /t$ converges weakly as $t\to\infty$ if and only if $\mathtt{I}[\varphi]<\infty$, where
 \[
 \mathtt{I}[\varphi] = \alpha \tilde{\mathcal{E}} [\langle \varphi, \Tilde{\mathcal{Z}}\rangle] ,\qquad\text{resp. } {\mathtt I} [\varphi] = \langle \varphi, \upsilon \rangle + \int_{M(E)^\circ} \langle \varphi, \nu \rangle \Upsilon (\mathrm{d}\nu),
 \]
 for the BMPI, resp.  SPI setting. In that case, for all $\mu\in N(E)$ (resp. $\mu\in M(E)$) and $\theta\geq 0$,
\begin{equation*}
\lim_{t \to \infty}\mathbb{E}_{\mu} \left[ \exp\left(-\theta \frac{\langle f, Y_t\rangle}{t} \right)\right] = \left(1 +  \theta\frac{1}{2}\langle f, \tilde\varphi\rangle \langle  \mathbb V[\varphi], \tilde\varphi \rangle\right)^{-{2{\mathtt I}[\varphi]}/{\langle  \mathbb V[\varphi], \tilde\varphi \rangle}}.
\label{eq:Gamma}
\end{equation*}
\end{theorem}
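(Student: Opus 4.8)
The plan is to work entirely in the BMP setting and reduce everything to the behaviour of the nonlinear semigroup $\sv_t$ together with the immigration mechanism $\mathtt H$. First I would write down the Laplace functional of $\langle f, Y_t\rangle$. Using the decomposition \eqref{BMPI} and the independence of the immigration Poisson process $(D_t)$, the branching mechanism \eqref{MBP}, and an exponential-formula (Campbell) computation for the i.i.d.\ immigration events at the Poisson times, one obtains
\begin{equation*}
\mathbb{E}_{\mu}\Bigl[{\rm e}^{-\theta\langle f, Y_t\rangle/t}\Bigr] = \exp\left(-\langle \sv_t[\theta f/t], \mu\rangle - \int_0^t \mathtt{H}\bigl[\sv_s[\theta f/t]\bigr]\,\d s\right).
\end{equation*}
So the whole problem is asymptotics, as $t\to\infty$, of (i) $\langle \sv_t[\theta f/t],\mu\rangle$ and (ii) $\int_0^t \mathtt H[\sv_s[\theta f/t]]\,\d s$.

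For (i), I would reuse the analysis already needed for Theorems \ref{thm:survival} and \ref{thm:Yaglom}: under (H1)--(H2) with $\lambda=0$, a second-order Taylor expansion of the nonlinear semigroup around $0$ (using $\mathbb V$ for the quadratic term and the mean semigroup $\sT_t$ for the linear term) combined with the Perron--Frobenius behaviour $\langle\sT_t[\varphi],\mu\rangle = \langle\varphi,\mu\rangle$ and the uniform ergodicity $\Delta_t = O(e^{-\varepsilon t})$ gives $t\,\sv_t[\theta g/t](x) \to \varphi(x)\,\theta\langle g,\tilde\varphi\rangle\big/\bigl(1 + \tfrac12\theta\langle g,\tilde\varphi\rangle\langle\mathbb V[\varphi],\tilde\varphi\rangle\bigr)$ uniformly in $x$, for $g\in B^+(E)$. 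Hence $\langle \sv_t[\theta f/t],\mu\rangle \to 0$: the initial mass washes out, exactly as one expects. The key estimate to quote/establish is a uniform bound of the form $\sup_x t\,\sv_t[\theta f/t](x) \le C$ for all large $t$, which controls the error terms in the Taylor expansion via (H1).

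For (ii), the substance of the theorem, I would change variables and analyse $\int_0^t \mathtt H[\sv_s[\theta f/t]]\,\d s$. For $s$ of order $t$, i.e.\ $s = ut$ with $u\in(0,1]$, the pointwise limit from step (i) gives $s\,\sv_s[\theta f/t](x) = (ut)\sv_{ut}[(\theta/u) (f/(ut))](x) \to \varphi(x) \cdot \frac{(\theta/u)\langle f,\tilde\varphi\rangle}{1+\tfrac12(\theta/u)\langle f,\tilde\varphi\rangle\langle\mathbb V[\varphi],\tilde\varphi\rangle}$, so $\sv_{ut}[\theta f/t](x)\sim \tfrac1t h_u(x)$ with $h_u$ an explicit multiple of $\varphi$. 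Since $\mathtt H[g] = \alpha\tilde{\mathcal E}[1-{\rm e}^{-\langle g,\tilde{\mathcal Z}\rangle}]$ and $\langle g,\tilde{\mathcal Z}\rangle$ is small of order $1/t$, linearising gives $\mathtt H[\sv_{ut}[\theta f/t]] \sim \tfrac1t\, \alpha\tilde{\mathcal E}[\langle h_u,\tilde{\mathcal Z}\rangle] = \tfrac1t\,\mathtt I[\varphi]\cdot(\text{explicit function of }u)$ (using $h_u\propto\varphi$ and the definition $\mathtt I[\varphi] = \alpha\tilde{\mathcal E}[\langle\varphi,\tilde{\mathcal Z}\rangle]$). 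Writing $\int_0^t \mathtt H[\sv_s[\theta f/t]]\,\d s = t\int_0^1 \mathtt H[\sv_{ut}[\theta f/t]]\,\d u$ and passing to the limit, the factors of $t$ cancel and one is left with $\mathtt I[\varphi]\int_0^1 \frac{\theta\langle f,\tilde\varphi\rangle/u}{1+\tfrac12(\theta\langle f,\tilde\varphi\rangle/u)\langle\mathbb V[\varphi],\tilde\varphi\rangle}\,\d u$; with $a = \theta\langle f,\tilde\varphi\rangle$, $b=\tfrac12\langle\mathbb V[\varphi],\tilde\varphi\rangle$ this integral evaluates (substitute $v = a b/u$) to $\tfrac1b\log(1+ab)$, giving the stated exponent $-\frac{2\mathtt I[\varphi]}{\langle\mathbb V[\varphi],\tilde\varphi\rangle}\log\bigl(1+\tfrac12\theta\langle f,\tilde\varphi\rangle\langle\mathbb V[\varphi],\tilde\varphi\rangle\bigr)$, i.e.\ the Gamma Laplace transform. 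For the "only if" direction, note that if $\mathtt I[\varphi]=\infty$ then the same linearisation forces $\int_0^t\mathtt H[\sv_s[\theta f/t]]\,\d s\to\infty$ (for any $f$ with $\langle f,\tilde\varphi\rangle>0$, by Fatou on the $u$-integral using that $h_u$ is a strictly positive multiple of $\varphi$), so the Laplace transform tends to $0$ and $\langle f,Y_t\rangle/t$ cannot converge weakly to a proper law; one also needs the trivial observation that $\lambda\le 0$ is necessary, which is built into (H2).

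\textbf{Main obstacle.} The delicate point is making the interchange of limit and integral in step (ii) rigorous and uniform: one must dominate $s\mapsto s\,\sv_s[\theta f/t](x)$ uniformly in both $s\in(0,t]$ and $t$ large, including the regime of small $s$ (where $\sv_s[\theta f/t]$ is close to $\sP_s[\theta f/t]\le \theta\|f\|/t$, so the contribution of $s=o(t)$ to the $u$-integral is negligible) and the regime $u$ near $0$ (where the integrand is integrable precisely because $\frac{a/u}{1+ab/u} = \frac{a}{u+ab}\le \frac1b$ is bounded). The cleanest route is: (a) prove the uniform bound $t\,\sv_t[\theta g/t](x)\le C_g\varphi(x)$ for all large $t$ via a Grönwall/comparison argument on \eqref{nonlinv} using (H1); (b) prove the pointwise convergence $t\,\sv_t[\theta g/t]\to$ limit uniformly on $E$ using (H2); (c) combine with $0\le 1-{\rm e}^{-x}\le x$ and dominated convergence on $\int_0^1\cdots\d u$. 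Steps (a)--(b) are exactly the technical core shared with Theorems \ref{thm:survival}--\ref{thm:Yaglom}, so most of the work can be cited from there; the genuinely new ingredient is the scaling bookkeeping in (ii) and the elementary integral evaluation.
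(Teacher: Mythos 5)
Your overall route is the same as the paper's: start from the identity \eqref{evol-eq-BMPI}, i.e. $\mathbb{E}_{\mu}[\mathrm{e}^{-\theta\langle f,Y_t\rangle/t}]=\exp\bigl(-\langle \sv_t[\theta f/t],\mu\rangle-\int_0^t \mathtt{H}[\sv_s[\theta f/t]]\,\d s\bigr)$, discard the first term, linearise $\mathtt{H}$ using {(H1)}, identify the limit of the time integral from the critical (Yaglom-type) asymptotics of the nonlinear semigroup, and get necessity of $\mathtt{I}[\varphi]<\infty$ from a matching lower bound. The paper does exactly this, except that instead of your substitution $s=ut$ it integrates the explicit asymptotic $a_s[\mathrm{e}^{-\theta f/t}]\approx\bigl(\langle 1-\mathrm{e}^{-\theta f/t},\tilde\varphi\rangle^{-1}+\tfrac12\langle\mathbb{V}[\varphi],\tilde\varphi\rangle s+F_1+F_2\bigr)^{-1}$ directly over $s\in[0,t]$. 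The problem is that the ``scaling bookkeeping'' which you yourself flag as the genuinely new ingredient is wrong as written. Since $\theta f/t=(u\theta)\,f/(ut)$, the correct reparametrisation is $\sv_{ut}[\theta f/t]=\sv_T[\theta' f/T]$ with $T=ut$ and $\theta'=u\theta$, \emph{not} $\theta'=\theta/u$. Writing $a=\theta\langle f,\tilde\varphi\rangle$ and $b=\tfrac12\langle\mathbb{V}[\varphi],\tilde\varphi\rangle$, the correct version gives
\[
\sv_{ut}[\theta f/t](x)\sim\frac{\varphi(x)}{t}\cdot\frac{a}{1+abu},
\qquad
t\int_0^1 \mathtt{H}\bigl[\sv_{ut}[\theta f/t]\bigr]\,\d u\;\longrightarrow\;\mathtt{I}[\varphi]\int_0^1\frac{a\,\d u}{1+abu}=\frac{\mathtt{I}[\varphi]}{b}\log(1+ab),
\]
which is the exponent in the statement (and agrees with \eqref{lim-int-a-BPI}). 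Your substitution produces the integrand $\frac{a/u}{1+ab/u}=\frac{a}{u+ab}$, and is moreover internally inconsistent: if $s\,\sv_s\to L_u$ then $\sv_{ut}\sim L_u/(ut)$, so your $h_u$ should be $L_u/u$, which with $\theta'=\theta/u$ even blows up like $1/u$ at $0$. Finally, the evaluation you assert, $\int_0^1\frac{a}{u+ab}\,\d u=\frac1b\log(1+ab)$, is false: that integral equals $a\log\bigl(1+\tfrac{1}{ab}\bigr)$. So as written the computation does not produce the Gamma Laplace transform; the fix is elementary, but it is precisely the step on which the theorem's constant rests.

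A second, smaller gap: the uniform limit $t\,\sv_t[\theta f/t]\to\varphi\,a/(1+ab)$ for \emph{general} $f\in B^+(E)$ is not directly citable from the Yaglom argument, and your quantitative step (a)--(b) relies on the lower bound of Lemma \ref{lem: a-lowerbd}, which for $g=\mathrm{e}^{-\theta f/t}$ requires $\sup_y \varphi(y)t/(\theta f(y))<\infty$, i.e. $\inf_{x\in E}f(x)>0$. The paper first proves the limit of $\int_0^t a_s[\mathrm{e}^{-\theta f/t}]\,\d s$ under this restriction and then removes it by a separate continuity argument for the functional $\mathtt{F}$ (its Step 5); your plan needs the same device (or the truncation $k\wedge\theta f$ used in the paper's necessity step). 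Your ``only if'' argument via monotone convergence/Fatou is fine in spirit and matches the paper's Step 1, again modulo this lower-bound issue.
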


%
%

\begin{remark}
    Under {(H1)} and {(H2)}, we deduce from the above theorem that there is no stationary measure $Y_\infty$ on $E$ such that $Y_t \to Y_\infty$ weakly as $t\to \infty$. This is due to the fact that the process $(Y, \mathbb{P})$ always explodes at criticality.
\end{remark}

\begin{theorem}[Stability at subcriticality]\label{subcrit} Suppose that $(Y, \mathbb{P})$ is a  $(\sP, \sG, \mathtt{H})$-BMPI, resp. $(\sP,\psi,\phi,\chi)$-SPI,  satisfying   {(H1)} and {(H2)} with $\lambda<0$. Then, {for all $\mu\in N(E)$, resp. $\mu\in M(E)$}, there exists a measure $Y_\infty$ on $E$ given by 
 \[
    \mathbb{E}_{\mu}\left[\mathrm{e}^{-\langle f,Y_\infty\rangle}\right] = {\rm e}^{-\int_0^\infty \mathtt{H}[\mathtt{v}_{s}[f]] \mathrm{d}s}, \text{ resp. } \mathbb{E}_{\mu}\left[\mathrm{e}^{-\langle f,Y_\infty\rangle}\right] = {\rm e}^{-\int_0^\infty \chi(\mathtt{V}_{s}[f]) \mathrm{d}s},
    \]
 such that $Y_t \to Y_\infty$ weakly as $t\to \infty$ if and only if
\begin{equation}
  \int_0^{z_0} \frac{\mathtt{H}[z\varphi]}{z}\mathrm{d}z <\infty, \qquad\text{resp. }   \int_0^{z_0} \frac{\chi[z\varphi]}{z}\mathrm{d}z <\infty,  \qquad\text{for some } z_0>0,
  \label{subinttest}
\end{equation}
if and only if
\begin{equation}\label{log-immig-BMPI}
     \Tilde{\mathcal{E}}\left[\log \left(1+\langle\varphi, \Tilde{\mathcal{Z}}\rangle\right)\right]<\infty, \qquad \text{resp. }     \int_{M(E)^{\circ}}\log \left(1+ \langle{\varphi},{\nu}\rangle\right)\Upsilon(\d \nu)<\infty.
\end{equation}
\end{theorem}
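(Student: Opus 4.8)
The plan is to obtain an exact formula for the Laplace functional of $Y_t$, reduce the statement to an integral test for the one-parameter family $(\mathtt v_t[\theta\varphi])_{t\ge0}$, and then pin that family down, up to constant multiples of $\mathrm e^{\lambda t}\varphi$, using the subcriticality in {(H2)} together with a second-order Taylor expansion controlled by {(H1)}. As in the rest of the paper I argue for the $(\mathtt P,\mathtt G,\mathtt H)$-BMPI; the $(\mathtt P,\psi,\phi,\chi)$-SPI case is identical after replacing $\mathtt v$ by $\mathtt V$ and $\mathtt H$ by $\chi$, using the excursion identity $\mathbb Q_y(1-\mathrm e^{-\langle f,X_r\rangle})=\mathtt V_r[f](y)$ and the (two-part) intensity of the Poisson random measure $\mathtt N$ in \eqref{SPI}. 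First, conditioning on the immigration times and marks in \eqref{BMPI}, applying the branching property \eqref{MBP} to each immigrated family, and using the exponential formula for the rate-$\alpha$ Poisson immigration process with i.i.d.\ marks $\tilde{\mathcal Z}$, one gets, for $\mu\in N(E)$, $f\in B^+(E)$, $t\ge0$,
\begin{equation*}
\mathbb E_\mu\!\left[\mathrm e^{-\langle f,Y_t\rangle}\right]=\exp\!\Big(-\langle\mathtt v_t[f],\mu\rangle-\int_0^t\mathtt H[\mathtt v_r[f]]\,\mathrm dr\Big),
\end{equation*}
because $\alpha\int_0^t\big(1-\tilde{\mathcal E}[\mathrm e^{-\langle\mathtt v_{t-s}[f],\tilde{\mathcal Z}\rangle}]\big)\,\mathrm ds=\int_0^t\mathtt H[\mathtt v_r[f]]\,\mathrm dr$. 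Since $\lambda<0$, Jensen's inequality gives $0\le\mathtt v_t[f]\le\mathtt T_t[f]$, so \eqref{Deltat} yields $\norm{\mathtt v_t[f]}\le\norm\varphi\,\mathrm e^{\lambda t}\big(\langle f,\tilde\varphi\rangle+\norm f\Delta\big)\to0$ and in particular $\langle\mathtt v_t[f],\mu\rangle\to0$. Hence $Y_t$ converges, in the sense of Laplace functionals over $B^+(E)$, if and only if $\int_0^\infty\mathtt H[\mathtt v_r[\theta f]]\,\mathrm dr<\infty$ for every $\theta>0$, $f\in B^+(E)$; when this holds the limit equals $\exp(-\int_0^\infty\mathtt H[\mathtt v_r[f]]\,\mathrm dr)$, is independent of $\mu$, is continuous at $f=0$ by dominated convergence, and so by the continuity theorem for random measures corresponds to a proper random measure $Y_\infty$.

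It remains to characterise finiteness of $\int_0^\infty\mathtt H[\mathtt v_r[\theta f]]\,\mathrm dr$, for which I would derive two-sided bounds on $\mathtt v_t[\theta\varphi]$. The upper bound is immediate: for every $\theta>0$ and $f\in B^+(E)$, $\mathtt v_t[\theta f](x)\le\mathtt T_t[\theta f](x)\le\theta\,C(f)\,\mathrm e^{\lambda t}\varphi(x)$ with $C(f)=\langle f,\tilde\varphi\rangle+\norm f\Delta$, by \eqref{Deltat} and the (strict) positivity of $\varphi$. For a matching lower bound, take $f=\varphi$; writing $1-\mathrm e^{-\mathtt v_t[\theta\varphi](x)}=\mathbb E_{\delta_x}[1-\mathrm e^{-\theta\langle\varphi,X_t\rangle}]$ and using $y-\tfrac12y^2\le1-\mathrm e^{-y}\le y$,
\begin{equation*}
\mathtt v_t[\theta\varphi](x)\ \ge\ 1-\mathrm e^{-\mathtt v_t[\theta\varphi](x)}\ \ge\ \theta\,\mathtt T_t[\varphi](x)-\tfrac{\theta^2}{2}\,\mathbb E_{\delta_x}\!\big[\langle\varphi,X_t\rangle^2\big]=\theta\,\mathrm e^{\lambda t}\varphi(x)-\tfrac{\theta^2}{2}\,\mathbb E_{\delta_x}\!\big[\langle\varphi,X_t\rangle^2\big].
\end{equation*}
Under {(H1)} the second-moment identity (as used in the proofs of Theorems~\ref{thm:survival}--\ref{thm:Yaglom}) expresses $\mathbb E_{\delta_x}[\langle\varphi,X_t\rangle^2]$ through $\mathtt T_t[\varphi^2]$ and $\int_0^t\mathtt T_s[\mathbb V[\mathtt T_{t-s}[\varphi]]]\,\mathrm ds$; since $\lambda<0$ and $\langle\mathbb V[\varphi],\tilde\varphi\rangle<\infty$, both are $O(\mathrm e^{\lambda t})\varphi(x)$, say at most $C_2\mathrm e^{\lambda t}\varphi(x)$. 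Choosing $\theta_0$ with $\theta_0C_2\le1$ and $t_0$ large (to absorb the $\Delta_t$-error), we obtain constants $0<c_1\le c_2$ with $c_1\theta\,\mathrm e^{\lambda t}\varphi(x)\le\mathtt v_t[\theta\varphi](x)\le c_2\theta\,\mathrm e^{\lambda t}\varphi(x)$ for all $0<\theta\le\theta_0$, $t\ge t_0$, $x\in E$.

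With these bounds the conclusion follows by a change of variables: for $a>0$, substituting $z=a\mathrm e^{\lambda t}$ (and using $\lambda<0$),
\begin{equation*}
\int_{t_0}^\infty\mathtt H[a\,\mathrm e^{\lambda t}\varphi]\,\mathrm dt=\frac{1}{-\lambda}\int_0^{a\mathrm e^{\lambda t_0}}\frac{\mathtt H[z\varphi]}{z}\,\mathrm dz,
\end{equation*}
which is finite iff $\int_0^{z_0}z^{-1}\mathtt H[z\varphi]\,\mathrm dz<\infty$ for some (hence every) $z_0>0$, the integrand being locally bounded away from $0$ and $\infty$; also $\int_0^{t_0}\mathtt H[\mathtt v_r[\theta f]]\,\mathrm dr\le\alpha t_0<\infty$. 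Hence, by monotonicity of $\mathtt H$ and the bounds above: if \eqref{subinttest} holds then $\int_0^\infty\mathtt H[\mathtt v_r[\theta f]]\,\mathrm dr\le\alpha t_0+\int_{t_0}^\infty\mathtt H[\theta C(f)\mathrm e^{\lambda r}\varphi]\,\mathrm dr<\infty$ for every $\theta,f$, giving $Y_t\to Y_\infty$; conversely, if $Y_t\to Y_\infty$ then, taking $f=\varphi$ and $0<\theta\le\theta_0$, $\int_{t_0}^\infty\mathtt H[c_1\theta\mathrm e^{\lambda r}\varphi]\,\mathrm dr\le\int_0^\infty\mathtt H[\mathtt v_r[\theta\varphi]]\,\mathrm dr<\infty$, which is \eqref{subinttest}. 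Finally, since $\mathtt H[z\varphi]=\alpha\tilde{\mathcal E}[1-\mathrm e^{-z\langle\varphi,\tilde{\mathcal Z}\rangle}]$, Tonelli gives $\int_0^{z_0}z^{-1}\mathtt H[z\varphi]\,\mathrm dz=\alpha\,\tilde{\mathcal E}\big[\int_0^{z_0}z^{-1}(1-\mathrm e^{-z\langle\varphi,\tilde{\mathcal Z}\rangle})\,\mathrm dz\big]$, and the elementary fact that $\int_0^{z_0}z^{-1}(1-\mathrm e^{-za})\,\mathrm dz$ is bounded above and below by ($z_0$-dependent) constant multiples of $\log(1+a)$, uniformly in $a\ge0$, yields the equivalence with $\tilde{\mathcal E}[\log(1+\langle\varphi,\tilde{\mathcal Z}\rangle)]<\infty$. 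The SPI equivalence in \eqref{log-immig-BMPI} follows the same way, using $\int_0^{z_0}z^{-1}\chi[z\varphi]\,\mathrm dz=z_0\langle\varphi,\upsilon\rangle+\int_{M(E)^{\circ}}\big(\int_0^{z_0}z^{-1}(1-\mathrm e^{-z\langle\varphi,\nu\rangle})\,\mathrm dz\big)\Upsilon(\mathrm d\nu)$ together with the finiteness of $(1\wedge\langle1,\nu\rangle)\Upsilon$.

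The main obstacle is the lower bound in the second paragraph: one must verify that, after the Perron-Frobenius rescaling by $\mathrm e^{-\lambda t}\varphi^{-1}$, the quadratic Taylor correction is genuinely of smaller order than the linear term, so that $\mathtt v_t[\theta\varphi]$ stays comparable to $\mathrm e^{\lambda t}\varphi$ uniformly in $x\in E$ and $t\ge t_0$. This uses {(H1)} through the finiteness of $\langle\mathbb V[\varphi],\tilde\varphi\rangle$, the decay $\Delta_t=O(\mathrm e^{-\varepsilon t})$ from {(H2)}, and the observation that the linear term scales like $\theta$ while the correction scales like $\theta^2$, so that restricting to small $\theta$ (harmless, since $\theta f$ ranges over all the test functions one needs) makes the estimate work. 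In the superprocess case this step additionally requires tracking both contributions to the intensity of $\mathtt N$ in \eqref{SPI} (the excursion part through $\mathbb Q_y$ and the copy part through $\mathbb P_{u\pi}$), but the excursion identity collapses the immigration factor to $\exp(-\int_0^t\chi[\mathtt V_r[f]]\,\mathrm dr)$, after which the argument proceeds verbatim.
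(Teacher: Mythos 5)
Your proposal is correct and its skeleton coincides with the paper's: the same Laplace-functional identity \eqref{evol-eq-BMPI} (which you obtain by Campbell's exponential formula for the marked Poisson immigration rather than by the paper's first-immigration renewal equation -- an equivalent derivation), the same reduction of weak convergence to finiteness of $\int_0^\infty \mathtt{H}[\mathtt{v}_s[f]]\,\mathrm{d}s$, the same Jensen-plus-(H2) upper bound $\mathtt{v}_t[f]\le C(f)\mathrm{e}^{\lambda t}\varphi$, a matching lower bound followed by the change of variables $z\propto \mathrm{e}^{\kappa\lambda t}$, and Tonelli for the equivalence of \eqref{subinttest} with \eqref{log-immig-BMPI}. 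The one step where you genuinely diverge is the lower bound. The paper exploits monotonicity in the argument of $\mathtt{v}_t$ and evaluates it at the time-dependent function $\mathrm{e}^{\lambda t}f\le f$, so that the quadratic Taylor correction is $O(\mathrm{e}^{3\lambda t})$ and is dominated by the linear term of order $\mathrm{e}^{2\lambda t}$ uniformly for \emph{every} $f\in B^+(E)$, giving $\mathtt{v}_t[f]\ge\tfrac13\langle f,\tilde\varphi\rangle \mathrm{e}^{2\lambda t}\varphi$ for large $t$ (using the subcritical moment bounds of \cite{GHKcorr}, cf.\ \eqref{ineq-moment-1-subcrit}--\eqref{ineq-moment-2-subcrit}). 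You instead fix $f=\varphi$, use $1-\mathrm{e}^{-y}\ge y-\tfrac12 y^2$ directly, bound $\mathtt{T}^{(2)}_t[\varphi]\le C_2\mathrm{e}^{\lambda t}\varphi$ via the many-to-two formula (this is exactly the content of \eqref{ineq-moment-2-subcrit}, and your rederivation from $\mathtt{T}_t[\varphi^2]+\int_0^t\mathtt{T}_s[\mathbb{V}[\mathtt{T}_{t-s}[\varphi]]]\,\mathrm{d}s$ is sound given (H1), (H2) and $\lambda<0$), and then restrict to $\theta\le\theta_0$ so the quadratic term is at most half the linear one. This buys a sharper bound $\mathtt{v}_t[\theta\varphi]\asymp\theta\mathrm{e}^{\lambda t}\varphi$ but only for the single test function $\theta\varphi$ with $\theta$ small -- which is all the necessity direction needs, since weak convergence forces finiteness of the integral for that choice; after the substitution $z=c_1\theta\mathrm{e}^{\lambda t}$ the difference in exponent ($\mathrm{e}^{\lambda t}$ versus the paper's $\mathrm{e}^{2\lambda t}$) only affects constants, so both routes land on \eqref{subinttest}. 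Your final Tonelli/log-comparison step and the SPI adaptation (splitting $\chi$ into its $\upsilon$ and $\Upsilon$ parts and using finiteness of $(1\wedge\langle1,\nu\rangle)\Upsilon$) agree with the paper, which defers those details to \cite{thesis}; note only that your second-moment bound also implicitly uses $\|\mathbb{V}[\varphi]\|<\infty$, an immediate consequence of (H1), which is worth stating explicitly.
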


\begin{remark}\rm
    Notice that if $\inf_{x\in E} \varphi (x) > 0$, then the eigenfunction $\varphi$ can be substituted into conditions 
    \eqref{subinttest} and \eqref{log-immig-BMPI} by the constant function $1$.  This will be the case, for instance, in the continuous-time multi-type Galton--Watson model with immigration, where the eigenfunction is the Perron--Frobenius eigenvector of the offspring mean matrix. 
\end{remark}



\section{Discussion}
In this section, we spend some time discussing the consistency of our results with the existing literature. Moreover, we also take the opportunity to discuss assumptions (H1)--(H4) in the setting of some specific spatial processes.

\subsection{Consistency with known results}
Theorems \ref{thm:survival} and \ref{thm:Yaglom} 
are the analogues of the Kolmogorov and Yalgom limit theorems which are so classical that they barely need any introduction. Needless to say, one may find them included in the standard branching process texts Athreya and Ney \cite{AN} and Asmussen and Hering \cite{AH3}.
Both Theorems \ref{thm:survival} and \ref{thm:Yaglom}  were recently proved in the setting of non-local branching Markov processes in \cite{HHKW} albeit under the significantly more restrictive assumption that $\langle 1, \mathcal Z\rangle$ is bounded above almost surely {\color{black} by a constant}. In the setting of superprocesses, {\color{black} the limits in Theorems \ref{thm:survival} and \ref{thm:Yaglom}} are studied \cite{RSZ} for local branching mechanisms but, as far as we are aware, these results are new in the setting of non-local superprocesses.

With regards to Theorem \ref{thm-gamma-dist}, as alluded to above, the analogue of \eqref{subinttest} in the form of the classical integral test \eqref{classicalintegral} is well know for {subcritical} Galton--Watson processes as well as corresponding to log-moments of immigrating mass at each generation. 
{At criticality, the convergence towards a gamma distribution after normalization has also been widely studied; we refer the reader to \cite{AN, FW,  Heathcote1965, Heathcote1966, Pakes-1971, Seneta1968, Seneta1970} for discrete time results and  \cite{Pakes1975, Seva1957, Yang} for continuous time.
For models with continuous mass, the natural analogues of Galton--Watson processes are continuous-state branching process. For this setting, the picture was first described by \cite{Pinsky} with further detail given in \cite{Mijatovic}. See also Chapter 3 of \cite{ZL11}, where an integral similar to that of \eqref{subinttest} can also be found  for the setting of CSBPs.

Finally, regarding Theorem \ref{subcrit}, \cite{Quine, Kaplan} provide results which mirror those of Theorem \ref{subcrit} for subcritical multi-type Galton--Watson processes with immigration. It is worth noting that multi-type branching processes may be considered as one of the simplest examples of a spatial branching process, where the spatial component evolves in a discrete or finite set. In this setting, the mean matrix of types across a single generation codes the notion of criticality through the value of its leading eigenvalue in relation to unity. (The assumption {(H2)} is a direct generalisation of this concept.) We were unable to find any continuous-time analogues in the setting of multi-type Galton--Watson processes, hence we presume that Theorem \ref{subcrit} is a new result in this setting given that they are special cases of BMPIs as we have defined them.

Otherwise, for general BMPIs, we are unaware of any work on immigration which covers the level of generality addressed in Theorems \ref{thm-gamma-dist} and  \ref{subcrit}. For the setting of  SPIs, the most comprehensive work to date that we could find is nicely summarised in  Section 9.6 of \cite{ZL11}; see also  \cite{ZL2001, ZL2021} for related material. Nonetheless, we note that e.g. the integral test and the scaled limit to a gamma distribution we provide appear to be new. 

On a final note, we mention that the setting for general BMPIs and SPIs has some implicit context through the well understood study of martingale change of measures in a variety of settings, see e.g. \cite{Evans, Etheridge, EK} among many others. As a rule of thumb, it is known that inherent additive martingales, which  typically arise from the leading right-eigenfunction described in assumption {(H2)}, when used as a change of measure on the ambient probability space,  invoke a so-called spine decomposition, which is akin to a BMPI/SPI. Although distributional stability of the spatial population is not necessarily of concern in this context (whereas martingale convergence typically is), the notion of controlled growth through logarithmic moment conditions is certainly an important part of the dialogue. For a general perspective of martingale changes of measure and immigration in the context of BMPIs, see the discussion in \cite{Horton2023}.

\subsection{Two examples}
We  consider two concrete examples which resonate with existing literature.
\medskip

\noindent{\bf Branching Brownian motion  on a compact domain.}
We consider a regular branching Brownian motion in which particles branch independently at a constant rate $\beta>0$ with i.i.d. numbers of offspring distributed like $N$. Branching is local in the sense that offspring are positioned at the point in space where their parent dies. This process is contained in a regular  bounded domain $D$ such that when an individual first touches the boundary of the domain it is killed and sent to a cemetery state. This model was considered by \cite{Ellen}, for which it was shown that {(H2)} holds providing $\partial D$ is Lipschitz. It is also known that for subcritial and critical systems, as defined by {(H2)}, the requirement {(H3)} holds. In fact, it is necessarily the case that $\tilde\varphi = \varphi$. Moreover, as soon as $\mathcal{E}[N^2]<\infty$, we also have that {(H1)} and {(H4)} hold. Indeed, for the latter, it is easy to see that 
\[
        \mathbb{V}_M[g](x) = \beta g(x)^2 \mathcal{E}\left[N(N-1)\mathbf{1}_{\{N\leq M\}}\right], \qquad x\in E,
\]
so that 
\[
\langle \mathbb{V}_M[g], \tilde\varphi\rangle = \beta  \mathcal{E}\left[N(N-1)\mathbf{1}_{\{N\leq M\}}\right]\langle g^2, \tilde\varphi \rangle \geq K \langle g, \tilde\varphi\rangle^2
\]
by Jensen's inequality, which implies \eqref{weirdmixing} holds. 

\medskip

\noindent{\bf Multi-type continuous-state branching processes (MCSBP).} These processes are the natural analogues of multi-type Galton--Watson processes in the context of continous mass. One may also think of them as super Markov chains. In addition, allowing for immigration, MCSBPs were introduced in \cite{BLP} and can be represented via their semigroup properties or as solutions to SDEs. In essence, they correspond to the setting that $E = \{1,\ldots, n\}$, for some $n\in\mathbb{N}$. In this setting, {(H2)} is a natural ergodic assumption similar to those discussed in Section 4 of \cite{KP}, where a simple irreducibility assumption ensures that {(H2)} will hold. In essence, {(H2)} is the  classical Perron--Frobenius behaviour for the matrix of the mean semigroup.  The assumption {(H3)} does not automatically hold as, in the spirit of  CSBP processes, extinction can occur by a slow trickle of mass down to zero. The assumption {(H1)} is also natural,  ensuring finite second moments for the MCSBP mass. Finally, for the assumption {(H4)}, {
\begin{align*}
        \mathbb{V}_M[h](i)  &= \left(2c(i) +  \int_{(0,\infty)} y^2 \mathbf{1}_{\{y\leq M\}} \nu (i, \d y)\right)h(i)^2\notag\\
        &\hspace{2cm} + \beta(i)\int_{M(\{1,\ldots,n\})^\circ} \langle h, \nu\rangle^2 \mathbf{1}_{\{\langle 1, \nu\rangle\leq M\}}\Gamma({i},\d\nu),\qquad i\in \{1,\ldots,n\},
    \end{align*}
so again by Jensen's inequality it is easy to see that
\begin{equation*}
    \langle \mathbb{V}_M[h], \Tilde{\varphi}\rangle \geq \min_{i\in \{1,\ldots,n\} } \left(2c(i) +  \int_{(0,\infty)} y^2 \mathbf{1}_{\{y\leq M\}} \nu (i, \d y)\right) \langle h^2, \Tilde{\varphi}\rangle \geq K \langle h, \Tilde{\varphi}\rangle^2, 
\end{equation*}
i.e. {(H4)} is automatically satisfied.
}

\section{Evolution equations}
In this section, we consider several semigroup evolution equations that will be useful for proving our main results. We note that, in formulating them, we don't need to assume as much as {(H1)}--{(H4)}. {\color{black}We recall that the assumptions on the branching mechanisms $\mathtt{G}$, $\psi$ and $\phi$ ensure  that
	\[
	  \sup_{x \in E}\mathcal E_x[N] < \infty,  \qquad \text{resp. }    \sup_{x \in E}\left( \int_0^\infty |y| \nu(x, {\rm d}y) + \int_{M(E)^\circ} \langle 1, \nu\rangle \Gamma(x, {\rm d}
	  \nu)  \right) < \infty,
	\]
for branching particle processes, resp. superprocesses, which is needed for some of the results we  cite below. }


\subsection{Non-local branching Markov processes}
In the setting of the $(\sP, \sG)$-branching Markov process, the evolution equation for the expectation semigroup $(\sT_t, t \ge 0)$ is given by
\begin{equation*}\label{eq:BMP-linear}
\sT_t[f](x) = \sP_t[f](x) + \int_0^t \sP_s[\beta (\sm[\sT_{t-s}[f]] - \sT_{t-s}[f])](x) \d s,
\end{equation*}
for $t \ge 0$, $x \in E$ and $f \in B^+(E)$, where we have used the notation
\begin{equation*}\label{eq:mean} 
\sm[f](x) =  \mathcal E_x[\langle f, \mathcal Z\rangle].
\end{equation*}
See, for example, Lemma 8.1 of \cite{Horton2023}. 

\medskip

Our next evolution equation will relate the non-linear semigroup  to the linear semigroup, which will enable us to use {(H2)}, for example, to study the limiting behaviour of $\sv_t$. For this, we will introduce the following modification to the non-linear semigroup,
\[
  \su_t[g](x) = \mathbb E_{\delta_x}\left[1-\prod_{i = 1}^{N_t}g(x_i(t)) \right] = 1-\sv_t[-\ln g](x), \qquad g\in B^+_1(E).
\]
Recalling that we have assumed first moments of the offspring distribution, one can show that
\begin{equation}\label{eq:nonlin-lin}
\su_t[g](x) = \sT[1-g](x) - \int_0^t \sT_s[\sA[\su_{t-s}[g]]](x) \d s, \quad t \ge 0,
\end{equation}
where, for $g \in B^+_1(E)$ and $x \in E$, 
\begin{equation}
\sA[g](x) = \beta(x)\mathcal E_x\left[\prod_{i = 1}^N (1-g(x_i)) - 1 + \sum_{i = 1}^N g(x_i)\right].
\label{A}
\end{equation}
We refer the reader to Theorem 8.2 of \cite{Horton2023} for a more general version of \eqref{eq:nonlin-lin}, along with a proof.

\medskip

We now consider the process {\it with} immigration. Let us consider the transition semigroup  $(\mathtt{w}_t, t\geq0)$ for the $(\sP,\mathtt{G},\mathtt{H})$-BMPI, given by
\begin{equation}\label{eq: semigroup BPI}
    {\rm e}^{-\mathtt{w}_t[f](x)} = \mathbb{E}_{\delta_x}\left[{\rm e}^{-\langle f, Y_t\rangle}\right], 
\qquad t\geq0, \, f\in B^+ (E), \, x\in E.
\end{equation}
Denoting $\Tilde{Y}_t = \sum_{j=1}^{D_t} X_{t-\tau_j}^{(\Tilde{\mathcal{Z}}_j)}$, from Definition \ref{def-BMPI} it is clear that $Y^{(\delta_x)}_t = X^{(\delta_x)}_t  + \Tilde{Y}_t$ and 
$$
{\rm e}^{-\mathtt{w}_t[f](x)} = {\rm e}^{-\mathtt{v}_t[f](x)}{\rm e}^{-\Tilde{\mathtt{w}}_t[f]},
$$ 
where $(\Tilde{\mathtt{w}}_t,t\geq 0)$ is the transition semigroup associated to $\Tilde{Y}$. From the branching property \eqref{MBP} and the immigration counting measure \eqref{immigr-counting-measure}, it is clear that the Laplace functional of $X_t^{(\Tilde{\mathcal{Z}})}$ is given by
\begin{equation*}
    \Tilde{\mathcal{E}} \left[{\rm e}^{-\langle \mathtt{v}_t[f], \tilde{Z}\rangle}\right], \qquad f\in B^+(E).
\end{equation*}
Therefore, conditioning on the time of the first immigration event, it is possible to obtain
\begin{align*}
    \mathrm{e}^{-\Tilde{\mathtt{w}}_t [f]} &= \mathrm{e}^{-\alpha t} + \int_0^t  \alpha\mathrm{e}^{-\alpha s} \Tilde{\mathcal{E}} \left[\mathrm{e}^{-\langle\mathtt{v}_{t-s}[f], \tilde{Z}\rangle}\right] \mathrm{e}^{-\Tilde{\mathtt{w}}_{t-s} [f]}   \mathrm{d}s \\
    &= \mathrm{e}^{-\alpha t} + \int_0^t  \mathrm{e}^{-\alpha s} \left[\alpha-\mathtt{H}[\mathtt{v}_{t-s}[f]]\right] \mathrm{e}^{-\Tilde{\mathtt{w}}_{t-s} [f]}   \mathrm{d}s .
\end{align*}
It then follows from Theorem 2.1 in \cite{Horton2023} or Lemma 1.2 in Chapter 4 of \cite{D02}, for example, that
$$
\mathrm{e}^{-\Tilde{\mathtt{w}}_t [f]} = 1 - \int_0^t   \mathtt{H}[\mathtt{v}_{t-s}[f]]\mathrm{e}^{-\Tilde{\mathtt{w}}_{t-s} [f]}    \mathrm{d}s ,
$$
from which it is easily deduced that
\begin{equation}\label{evol-eq-BMPI}
    \mathrm{e}^{-\mathtt{w}_{t}[f](x)} = \exp\left(-\mathtt{v}_{t}[f](x) - \int_0^t \mathtt{H}[\mathtt{v}_{s}[f]] \mathrm{d}s\right).
\end{equation}
{\color{black}We note that similar calculations for the semigroup of processes with immigration are common  in other literature, e.g. in Chapters 3 and 9 of    \cite{ZL11}.}

\subsection{Non-local superprocesses}
In the setting of the  $(\sP,\psi,\phi)$-superprocess,
the evolution equation for the expectation semigroup $(\sT_t, t\geq0)$ is well known and satisfies 
\begin{equation}
\sT_t\bra{f}(x)=\sP_t[f](x)
+\int_{0}^{t}\sP_s\bra{\beta (\sm[\sT_{t-s}[f]]-\sT_{t-s}[f])+b\sT_{t-s}[f]} (x)\d s,
\label{superm21}
\end{equation} 
for $t\geq 0$, $x\in E$ and $f\in B^+(E)$, where, with a meaningful abuse of our branching Markov process notation, we now define
\begin{align}\label{eq-m}
\sm[f](x)
&=\gamma(x,f) + \int_{M(E)^\circ} \langle f, \nu\rangle\Gamma(x,\d\nu).
\end{align}
See for example  equation (3.24) of \cite{dawson2002nonlocal}.

\medskip

Similarly to the branching Markov process setting, let us re-write {\color{black} an extended version of the non-linear semigroup evolution $({\sV}_t, t\geq0)$, defined in \eqref{non-local-evolution-equation}, i.e. the natural analogue of \eqref{nonlinv},} in terms of the linear semigroup $(\sT_t, t\geq0)$. 
{\color{black}To this end, define
\[
{\rm e}^{-{\sV}_t\bra{f}(x)}=\mathbb{E}_{\delta_x}\big[{\rm e}^{-\langle{f},{X_t}\rangle}\big].
\]
From \cite{GHK}, we have the following evolution equation, 
\begin{equation}
{\sV}_t[f](x)= \sT_{t}[f](x) - \int_{0}^{t}\sT_{s}\left[\sJ[{\sV}_{t-s}[f]]\right](x)\d s, \quad f \in B^+(E), x \in E, t\geq 0,
\label{nonlinvJ}
\end{equation}
where, for $h\in B^+(E)$ and $x\in E$, we now define
\begin{align}
\sJ[h](x) &= \psi(x, h(x)) + \phi(x, h) + \beta(x)(\sm[h](x)-h(x)) + b(x)h(x)\notag\\
&=c(x)h(x)^2 +\int_{(0,\infty)}( {\rm e}^{-h(x) y}-1+h(x) y)\nu(x,\d y)\notag\\
&\hspace{1cm} +\beta(x)\int_{M(E)^{\circ}}({\rm e}^{-\langle{h},{\nu}\rangle}-1+ \langle h, \nu\rangle)\Gamma(x,\d \nu).
\label{J}
\end{align}
}

\medskip

As before, we now consider the non-local superprocess with immigration. In particular, we are interested in the transition semigroup pair $((\sW_t,\sV_t), t\geq0)$ for the $(\sP,\psi,\phi, \chi)$-SPI, where 
\begin{equation}\label{eq-semigr-SPI}
    {\rm e}^{-\sW_t[f](x)} = \mathbb{E}_{\delta_x}\left[{\rm e}^{-\langle f, Y_t\rangle}\right], 
\qquad t\geq0.
\end{equation}
From the definition \eqref{SPI}, with the help of  Campbell's formula, we have
\begin{align}
{\rm e}^{-\sW_t[f](x)} &= {\rm e}^{-\sV_t[f](x)} \exp\left(-\int_0^t \int_{M_0(E)}P_\chi(\dd\pi)\delta_\chi( \pi)\int_{E}\pi(\dd y)
\mathbb{Q}_y(1-{\rm e}^{-\langle f, X_{ t-s}\rangle}){\dd s}\right)\notag\\
&\hspace{2.5cm}\times \exp\left(-\int_0^t \int_{M_0(E)}P_\chi( \dd\pi){\int_0^\infty}n_\chi(  \pi, \dd u)
\mathbb{E}_{u\pi}(1-{\rm e}^{-\langle f, X_{ t-s}\rangle}){\dd s}\right) \notag\\
&={\rm e}^{-\sV_t[f](x)}  \exp\left(- \int_0^t \left[ {\langle \sV_{t-s}[f] ,\upsilon\rangle}
+ \int_{M^\circ(E)} (1- {\rm e}^{-\langle \sV_{t-s}{[f]}, \nu\rangle})\Upsilon(  \dd\nu)\right]\dd s\right) \notag\\
&=  \exp\left(-\sV_t[f](x)- \int_0^t \chi[\sV_{t-s}[f]]
\dd s\right).
\label{evol-eq-SPI}
\end{align}
We note that similar calculations can be found in Chapter 9 of \cite{ZL11}.
\section{Proof of Theorem \ref{thm:survival}}


\subsection{Non-local branching Markov processes}\label{KolBMP}
Let us define $\su_t(x) := \su_t[{\bf 0}](x)$, where ${\bf 0}$ is the constant $0$ function. Then $\su_t(x) = \mathbb P_{\delta_x}(\zeta > t)$ and hence $\mathbb P_{\delta_x}(\zeta > t)$ is a solution to \eqref{eq:nonlin-lin} with $f = {\bf 0}$. Our aim will be to use this evolution equation to obtain the asymptotic behaviour for $\su_t(x)$. In order to do so, it will be convenient to introduce the following quantities,
\[
  a_t[g] := \langle \su_t[g], \tilde\varphi \rangle, \quad \text{and} \quad a_t := a_t[{\bf 0}] = \langle  \su_t ,\tilde\varphi\rangle.
\] 
Integrating \eqref{eq:nonlin-lin} with respect to $\tilde\varphi$ and using the {(H2)}, we obtain
\begin{equation}\label{eq:at}
  a_t[g] = \langle 1-g  ,\tilde\varphi\rangle - \int_0^t \langle  \sA[\su_{t-s}[g]], \tilde\varphi\rangle\d s.
\end{equation}
The strategy of the proof is to first find coarse upper and lower bounds of the order $1/t$ for $a_t$ and $\su_t$, and then refine our estimates to obtain the precise constants. The method of proof follows closely that of \cite{HHKW} but with more precise estimates in our calculations. For the convenience of the reader, we will include the details. 

\medskip

We thus proceed via a series of lemmas, the first of which provides useful lower bounds on $\su_t[g]$ and $a_t[g]$ for general $g$. For the following lemma, we introduce the following change of measure
\begin{equation}
\frac{\d \mathbb P_\mu^\varphi}{\d\mathbb P_\mu}\bigg|_{\mathcal F_t} := \frac{\langle \varphi, X_t\rangle}{\langle \varphi, \mu\rangle}, \quad t \ge 0, \mu \in N(E),
\label{E:COM}
\end{equation}
where it follows from {(H2)} that $\langle \varphi, X_t\rangle / \langle \varphi, \mu\rangle$ is a martingale. 

\begin{lemma}\label{lem: a-lowerbd}
There exists $C \in (0, \infty)$ such that 
\[
\su_t[g](x)\ge \frac{C\varphi(x)}{\mathbb{E}^{\varphi}_{\delta_x}[\langle \varphi, X_t \rangle] +\sup_{y\in E}\frac{\varphi(y)}{\log(1/g(y))}}  \quad \text{ and }  \quad 
a_t[g]\ge \frac{C}{\mathbb{E}^{\varphi}_{\delta_x}[\langle \varphi, X_t \rangle] +\sup_{y\in E}\frac{\varphi(y)}{\log(1/g(y))}}
\]
for all $t\ge 0$ and $g\in B_1^+(E)$ such that $\sup_{y\in E}{\varphi(y)}/{\log(1/g(y))}<\infty$.

In particular, 
\[
\su_t(x)\ge \frac{C\varphi(x)}{t} \quad \text{ and } \quad a_t \ge \frac{C}{t}, \qquad t\ge 1.
\]
\end{lemma}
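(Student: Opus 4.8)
The plan is to exploit the evolution equation \eqref{eq:nonlin-lin} together with the change of measure \eqref{E:COM}, and to bound the nonlinear correction term $\sA[\cdot]$ from above by a quadratic in $\su_{t-s}[g]$. First I would record that under {(H1)} the function $\sA$ satisfies a bound of the form $0 \le \sA[h](x) \le c_0\, \beta(x)\mathcal{E}_x\!\big[\big(\sum_i h(x_i)\big)^2\big]$ for $h \in B^+_1(E)$ — this follows from the second-order Taylor estimate $0 \le \prod_i(1-h(x_i)) - 1 + \sum_i h(x_i) \le \tfrac12(\sum_i h(x_i))^2$ — so that, after integrating against $\tilde\varphi$ and using {(H2)} (which controls $\langle \sT_s[\cdot],\tilde\varphi\rangle$), one gets a pointwise bound $\sA[h](x) \le \mathrm{const}\cdot \|h\|\,\sm_2[h](x)$ where $\sm_2$ is the second-moment operator; crucially $\su_t[g] \le 1$ always, and moreover $\su_t[g](x) \le \sT_t[1-g](x)$ from \eqref{eq:nonlin-lin} (dropping the negative integral term), so $\su_t[g]$ inherits a uniform bound in terms of $\sup_y \varphi(y)/\log(1/g(y))$ via {(H2)}.

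The main step is the following Grönwall-type / fixed-point argument. Write $b_t := a_t[g] = \langle \su_t[g],\tilde\varphi\rangle$ and, more usefully, track the full profile: from \eqref{eq:nonlin-lin} and the martingale property of $\langle\varphi,X_t\rangle/\langle\varphi,\cdot\rangle$ under $\mathbb{P}^\varphi$, apply the expectation semigroup to the equation started from $\delta_x$. The key identity is that $\sT_t[h](x) = \varphi(x)\,\mathbb{E}^\varphi_{\delta_x}[\,h(X_t\text{-spine position})\,]$-type relations let one rewrite $\su_t[g](x)/\varphi(x)$ in a form amenable to iteration. Set $F_t := \sup_{x} \su_t[g](x)/\varphi(x)$. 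Feeding the quadratic upper bound on $\sA$ into \eqref{eq:nonlin-lin}, using $\langle \sm_2[\su_{t-s}[g]],\cdot\rangle \lesssim F_{t-s}^2$ and the {(H2)} bound $\Delta<\infty$ plus $\mathbb{E}^\varphi_{\delta_x}[\langle\varphi,X_t\rangle]$ growth, one obtains an inequality of the schematic shape
\[
\frac{1}{\su_t[g](x)} \le \frac{1}{\text{(leading linear term)}} + \mathrm{const}\cdot\Big(\int_0^t \cdots\Big),
\]
which after the substitution of the linear asymptotics collapses to
\[
\su_t[g](x) \ge \frac{C\varphi(x)}{\mathbb{E}^{\varphi}_{\delta_x}[\langle\varphi,X_t\rangle] + \sup_{y\in E}\varphi(y)/\log(1/g(y))}.
\]
The cleanest route is probably to first prove the bound for $a_t[g]$ directly: let $\rho_t := \langle 1-g,\tilde\varphi\rangle^{-1} \vee (\text{something})$… actually, differentiate (or difference) \eqref{eq:at} to get $\tfrac{\d}{\d t}a_t[g] \ge -C' a_t[g]^2$ once one shows $\langle \sA[\su_{t-s}[g]],\tilde\varphi\rangle \le C' a_{t-s}[g]^2$ (this last inequality uses {(H2)} to compare $\su_{t-s}[g]$ with $\varphi$ up to the factor $a_{t-s}[g]$, modulo the $\Delta_s$ error terms which are integrable by the exponential decay hypothesis), and then integrate the differential inequality $\d(1/a_t)/\d t \le C'$ to get $1/a_t[g] \le 1/a_0[g] + C't$, i.e. the claimed lower bound with $a_0[g] = \langle 1-g,\tilde\varphi\rangle \asymp (\sup_y \varphi(y)/\log(1/g(y)))^{-1}$. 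The pointwise statement for $\su_t[g](x)$ then follows by using \eqref{eq:nonlin-lin} to propagate the lower bound on $a_{t/2}[g]$ through one more half-step of the semigroup, again invoking {(H2)}.

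The specialisation to $g = \mathbf{0}$ is immediate: then $\log(1/g(y)) = +\infty$, so the supremum term vanishes, and since $\mathbb{E}^\varphi_{\delta_x}[\langle\varphi,X_t\rangle] = \langle\varphi,\delta_x\rangle\,e^{-\lambda t}\cdot(\text{bounded})$… here at criticality $\lambda = 0$ is \emph{not} assumed in this lemma, but in any case $\mathbb{E}^\varphi_{\delta_x}[\langle\varphi,X_t\rangle] \le \mathrm{const}\cdot t$ is false in general — rather one uses that at this stage we only need $t \ge 1$ and the relevant regime, so $\mathbb{E}^\varphi_{\delta_x}[\langle\varphi,X_t\rangle] \le C'' t$ follows from {(H2)} bounding the spine's expected $\varphi$-mass linearly; hence $\su_t(x) \ge C\varphi(x)/t$ and $a_t \ge C/t$ for $t \ge 1$. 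I expect the \textbf{main obstacle} to be the bookkeeping in the Grönwall/differential-inequality step: one must show the quadratic bound $\langle\sA[\su_s[g]],\tilde\varphi\rangle \le C' a_s[g]^2$ holds \emph{uniformly} — this requires using {(H2)} to say $\su_s[g] \approx a_s[g]\varphi$ in an $L^\infty$-sense up to the error $\Delta_s$, which is where the hypotheses $\Delta<\infty$ and $\Delta_s = O(e^{-\varepsilon s})$ enter, and one needs to handle the initial layer $s$ near $0$ separately (where $\su_s[g]$ may not yet be close to a multiple of $\varphi$) by a crude bound.
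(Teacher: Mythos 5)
Your proposal diverges substantially from the paper's argument, and as written it contains a genuine gap. The paper proves the lemma in a few lines, with no evolution equation and no Gr\"onwall step: writing $\su_t[g](x)=\varphi(x)\,\mathbb{E}^{\varphi}_{\delta_x}\bigl[(1-\prod_{i}g(x_i(t)))/\langle\varphi,X_t\rangle\bigr]$ under the change of measure \eqref{E:COM} and applying Jensen's inequality to the reciprocal, then using the elementary bound $1-\prod_i g(x_i(t))\ge\tfrac12\min(\langle\log(1/g),X_t\rangle,1)$ (which comes from $1-{\rm e}^{-x}\ge\min(x/2,1/2)$), one gets the pointwise estimate $\langle\varphi,X_t\rangle/(1-\prod_i g(x_i(t)))\le 2\bigl(\langle\varphi,X_t\rangle+\sup_{y}\varphi(y)/\log(1/g(y))\bigr)$, and both claimed bounds follow at once, uniformly in $t\ge0$; the ``in particular'' statement is then just the citation of Theorem 1 of \cite{GHK}, which gives $\mathbb{E}^{\varphi}_{\delta_x}[\langle\varphi,X_t\rangle]\le Ct$ for $t\ge1$ at criticality --- this is a second-moment estimate requiring {(H1)}, not a consequence of {(H2)} as you suggest.

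The main gap in your route is the step $\langle\sA[\su_s[g]],\tilde\varphi\rangle\le C'a_s[g]^2$. From Lemma \ref{Gmod} and {(H1)} you only get $\langle\sA[\su_s[g]],\tilde\varphi\rangle\le C\|\su_s[g]\|^2$, and to replace $\|\su_s[g]\|$ by $a_s[g]$ you need the uniform comparison $\su_s[g]\lesssim a_s[g]\varphi$ at the \emph{same} time $s$. What {(H2)} actually provides (via $\su_{s+r}[g]\le\sT_r[\su_s[g]]$, or via Lemma \ref{lem: iter}) is control of the sup-norm at a later time by the $\tilde\varphi$-average at an \emph{earlier} time, plus an error of the form $\int_0^s {\rm e}^{-\varepsilon(s-r)}\|\su_r[g]\|^2\,\d r$; turning either of these into a bound by $a_s[g]$ itself requires exactly the lower bound on $a_s[g]$ you are trying to prove, so the argument as sketched is circular (in the paper this two-sided comparison of $\su_t[g]/\varphi$ with $a_t[g]$ is only established later, in the proof of Theorem \ref{thm:survival}, \emph{after} the present lemma and Lemma \ref{lem: unif-small} are in hand). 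Note also that the comparability supplied by {(H4)} goes the other way: it yields $\langle\sA[\su_t],\tilde\varphi\rangle\gtrsim a_t^2$, which is what drives the \emph{upper} bound on $a_t$ in Lemma \ref{lem: unif-small}, not the lower bound here. Two smaller problems: your identification $a_0[g]\asymp\bigl(\sup_y\varphi(y)/\log(1/g(y))\bigr)^{-1}$ holds only in one direction (fortunately the direction you need), and your plan to recover the pointwise bound on $\su_t[g](x)$ by propagating the $a_{t/2}[g]$ bound ``through one more half-step'' does not produce the claim for all $t\ge0$ (small and moderate $t$ included), whereas the lemma is stated, and later used, with that uniformity. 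The paper's Jensen argument under the $\varphi$-change of measure avoids all of these difficulties.
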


\begin{proof}
Recall the change of measure \eqref{E:COM}. 
 By Jensen's inequality, we have
\begin{equation}
\mathbb{E}_{\delta_x}\left[1-\prod_{i=1}^{N_t}g(x_i(t))\right]
=\varphi(x)\mathbb{E}^{\varphi}_{\delta_{x}}\left[\frac{ 1-\prod_{i=1}^{N_t}g(x_i(t))}{\langle \varphi, X_{t}\rangle}\right]\ge \frac{\varphi(x)}{\mathbb E^{\varphi}_{\delta_{x}}\big[\frac{\langle\varphi, X_{t}\rangle}{1-\prod_{i=1}^{N_t}g(x_i(t))}\big]}
\end{equation}
where we note that $1-{\rm e}^{-x}\ge \min(x/2,1/2)$ for $x\ge 0$, so that
\[
1-\prod_{i=1}^{N_t}g(x_i(t)) \ge \tfrac12 \min(\langle \log(1/g),X_t\rangle, 1),
\]
(where this also holds for $g=\mathbf{0}$ with the convention that $\log(1/0)=\infty$).
Thus 
\[
\mathbb{E}_{\delta_x}\left[1-\prod_{i=1}^{N_t}g(x_i(t))\right]
\ge \frac{2\varphi(x)}{\mathbb{E}_{\delta_x}^{\varphi}\big[\max \{\langle \varphi, X_t\rangle,\frac{\langle \varphi, X_t \rangle}{\langle \log(1/g),X_t \rangle}\big]}
\ge \frac{2\varphi(x)}{\mathbb{E}^{\varphi}_{\delta_x}[\langle \varphi, X_t \rangle] + \sup_{y\in E}\frac{\varphi(y)}{\log(1/g(y))}}.
\]

The lower bound for $a_t[g]$ then follows from an integration with $\tilde\varphi$, recalling that we have normalised the left and right eigenfunctions so that $\langle \varphi , \tilde{\varphi}\rangle =1$.

The specific claim when $g=\mathbf{0}$ follows from \cite[Theorem 1]{GHK}, since this implies that $\mathbb{E}^{\varphi}_{\delta_x}[\langle \varphi, X_t \rangle] \le Ct$ for some constant $C\in (0,\infty)$ and for all $t\ge 1$. 
\end{proof}

\medskip

The next lemma shows that the leading order term in $\sA$, defined in \eqref{A}, is governed by the operator  $\mathbb V$.

\begin{lemma}[Properties of $\sA$ and $\mathbb{V}$]\label{L:Gprops}{Under the assumption {(H1)} we have the following.}
\begin{enumerate}[label=(\alph*), ref=\ref{L:Gprops}\alph*]\setlength{\itemsep}{0em}
\item We have $\sA[g](x)\geq 0 \text{ for all } g\in B^+_1,\, x\in E.$ 
\label{Gneg}
\item There exists a constant $C\in (0,\infty)$ such that $
\|\mathbb{V}[h_1]-\mathbb{V}[h_2]\| \le C\|h_1-h_2\|$ for all functions $h_1, h_2\in \{f\in B^+(E): \|f\|\leq k\}$ for some $k>0$.
\label{Vcont}
\item $\sA[g](x)\le \tfrac{1}{2} \mathbb{V}[g](x)$ for all $g\in B_1^+(E)$, $x\in E$.
\label{Gmod}
\item  $\|\sA[g]-\tfrac12 \mathbb{V}[g]\| =  o(\|g\|^2)$ as $\|g\|\to 0$, $g\in B_1^+(E)$. 
\label{GVsmall}
\end{enumerate}
\end{lemma}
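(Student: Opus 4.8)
The goal is to establish the four properties of $\sA$ and $\mathbb{V}$ listed in Lemma~\ref{L:Gprops}, where
\[
\sA[g](x) = \beta(x)\mathcal E_x\left[\prod_{i = 1}^N (1-g(x_i)) - 1 + \sum_{i = 1}^N g(x_i)\right], \qquad \mathbb{V}[g](x) = \beta(x)\mathcal{E}_x\left[\sum_{i\neq j}g(x_i)g(x_j)\right].
\]
My plan is to treat each part with elementary inequalities for products of numbers in $[0,1]$.

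\textbf{Part (a).} For reals $a_1,\dots,a_n \in [0,1]$ one has $\prod_{i=1}^n(1-a_i) \ge 1 - \sum_{i=1}^n a_i$; this is immediate by induction on $n$ (or by inclusion–exclusion, since the left side is the probability that none of $n$ independent events occurs). Applying this pointwise with $a_i = g(x_i) \in [0,1]$ inside the expectation, and using $\beta \ge 0$, gives $\sA[g](x) \ge 0$.

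\textbf{Parts (c) and (d).} The key is the two-sided bound: for $a_1,\dots,a_n\in[0,1]$,
\[
0 \;\le\; \sum_{i=1}^n a_i - 1 + \prod_{i=1}^n(1-a_i) \;\le\; \tfrac12 \sum_{i\neq j} a_i a_j,
\]
again provable by induction on $n$ (the inductive step for the upper bound uses $\prod_{i=1}^{n+1}(1-a_i) \le \prod_{i=1}^n(1-a_i) - a_{n+1}\prod_{i=1}^n(1-a_i) \le \prod_{i=1}^n(1-a_i) - a_{n+1}(1-\sum_{i\le n}a_i)$ and then the induction hypothesis plus the elementary $a_{n+1}\sum_{i\le n}a_i \le \tfrac12\big(\sum_{i\neq j\le n+1}a_ia_j - \sum_{i\neq j \le n}a_i a_j\big)$). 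Taking $a_i = g(x_i)$ and $\beta(x)\mathcal E_x[\,\cdot\,]$ yields part (c) at once, provided one checks the needed integrability — but $\sum_{i\ne j}g(x_i)g(x_j) \le \langle 1, \mathcal Z\rangle^2 \|g\|^2 = N^2\|g\|^2$, and $\sup_x\mathcal E_x[N^2]<\infty$ by {(H1)}, so $\mathbb V[g]$ is well-defined and finite. For part (d), write
\[
\tfrac12\mathbb V[g](x) - \sA[g](x) = \beta(x)\mathcal E_x\!\left[\tfrac12\sum_{i\ne j}g(x_i)g(x_j) - \Big(\sum_i g(x_i) - 1 + \prod_i(1-g(x_i))\Big)\right],
\]
which is non-negative by the upper bound above. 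To get the $o(\|g\|^2)$ rate, bound the integrand: on the event $\{N \le M\}$ the quantity in brackets is $O(\|g\|^3 N^3) = O(M\|g\|^3 N^2)$ by a Taylor expansion of $\prod(1-a_i)$ to third order (the difference $\tfrac12\sum_{i\ne j}a_ia_j - (\sum a_i - 1 + \prod(1-a_i))$ is a sum of products of three or more of the $a_i$'s, hence $\le \binom{N}{3}\|g\|^3 \le N^3\|g\|^3$); on the complementary event $\{N > M\}$ it is dominated by $\tfrac12\|g\|^2 N^2\mathbf 1_{\{N>M\}}$. Integrating,
\[
\|\tfrac12\mathbb V[g] - \sA[g]\| \;\le\; \|g\|^2\Big( M\|g\|\,\|\beta\|\sup_x\mathcal E_x[N^3\wedge (M N^2)] \;+\; \tfrac12\|\beta\|\sup_x\mathcal E_x[N^2\mathbf 1_{\{N>M\}}]\Big);
\]
wait — this requires third moments, which {(H1)} does not provide. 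Instead I keep $\mathbf 1_{\{N\le M\}}$ with the crude bound $\binom N3\|g\|^3\mathbf 1_{\{N\le M\}} \le \tfrac{M}{6}N^2\|g\|^3$, so the bracket becomes $\tfrac{M}{6}\|g\|\,\|\beta\|\sup_x\mathcal E_x[N^2] + \tfrac12\|\beta\|\sup_x\mathcal E_x[N^2\mathbf 1_{\{N>M\}}]$. Given $\epsilon>0$, first fix $M$ large so that $\tfrac12\|\beta\|\sup_x\mathcal E_x[N^2\mathbf 1_{\{N>M\}}] < \epsilon/2$ (possible by {(H1)} and dominated convergence, using the uniform second-moment bound as the dominating constant), then take $\|g\|$ small enough that $\tfrac{M}{6}\|g\|\,\|\beta\|\sup_x\mathcal E_x[N^2] < \epsilon/2$; this gives $\|\tfrac12\mathbb V[g]-\sA[g]\| \le \epsilon\|g\|^2$ for all small $\|g\|$, i.e.\ the claimed $o(\|g\|^2)$.

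\textbf{Part (b).} This is Lipschitz continuity of $\mathbb V$ on balls. Write, for $h_1,h_2$ with $\|h_i\|\le k$,
\[
\mathbb V[h_1](x) - \mathbb V[h_2](x) = \beta(x)\mathcal E_x\!\left[\sum_{i\ne j}\big(h_1(x_i)h_1(x_j) - h_2(x_i)h_2(x_j)\big)\right],
\]
and use $h_1(x_i)h_1(x_j) - h_2(x_i)h_2(x_j) = h_1(x_i)\big(h_1(x_j)-h_2(x_j)\big) + \big(h_1(x_i)-h_2(x_i)\big)h_2(x_j)$, so each summand is bounded in absolute value by $2k\|h_1-h_2\|$. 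Hence
\[
|\mathbb V[h_1](x) - \mathbb V[h_2](x)| \le 2k\|h_1-h_2\|\,\beta(x)\mathcal E_x[N(N-1)] \le 2k\|\beta\|\Big(\sup_y\mathcal E_y[N^2]\Big)\|h_1-h_2\|,
\]
which gives the result with $C = 2k\|\beta\|\sup_y\mathcal E_y[N^2]$.

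\textbf{Main obstacle.} The only genuinely delicate point is part (d): the naive third-order Taylor remainder estimate needs a uniform \emph{third} moment of $N$, which is not assumed. The remedy, as above, is the truncation-plus-uniform-integrability argument, splitting at $\{N\le M\}$ versus $\{N>M\}$ and choosing $M$ then $\|g\|$; this is exactly where {(H1)} (finite, uniformly bounded second moments) is used in an essential way, via dominated convergence to make the tail contribution $\sup_x\mathcal E_x[N^2\mathbf 1_{\{N>M\}}]$ small. Everything else is a routine application of the elementary product inequalities for numbers in $[0,1]$.
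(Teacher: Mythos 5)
Your proposal is correct and lands on the same estimates as the paper, but the route through parts (c) and (d) is genuinely different. For (a) and (b) you do exactly what the paper does (the elementary product inequality, and the splitting $h_1h_1-h_2h_2=h_1(h_1-h_2)+(h_1-h_2)h_2$ together with $\sup_x\mathcal{E}_x[N^2]<\infty$). For (c)--(d) the paper instead writes an \emph{exact} Taylor integral-remainder identity,
\[
\sA[h](x)=\beta(x)\mathcal{E}_x\Big[\sum_{i\ne j}h(x_i)h(x_j)\int_0^1(1-r)\prod_{k\ne i,j}(1-rh(x_k))\,\mathrm{d}r\Big],
\]
from which (c) is immediate and (d) follows by bounding $|1-\prod_{k\ne i,j}(1-rh(x_k))|$ and truncating at the $\|h\|$-dependent level $N\approx\|h\|^{-1/2}$; you instead prove the two-sided inequality $0\le\sum_i a_i-1+\prod_i(1-a_i)\le\tfrac12\sum_{i\ne j}a_ia_j$ by induction and control the third-order error by Bonferroni-type bounds, truncating at a fixed level $M$ and then sending $\|g\|\to0$. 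Both are sound; the paper's identity has the added benefit of being reused later (it gives $\sA[h]\ge 2^{-M}\mathbb{V}_M[h]$ in the proof of Lemma \ref{lem: unif-small}), whereas your argument is more elementary but yields only the inequalities. Two small points of precision: your phrase that the difference $\tfrac12\sum_{i\ne j}a_ia_j-(\sum_i a_i-1+\prod_i(1-a_i))$ ``is a sum of products of three or more of the $a_i$'s'' is not literally true (it is an alternating sum), though the claimed bound by $\binom{N}{3}\|g\|^3$ is correct and is exactly the third Bonferroni inequality, so just cite that (or prove it by the same induction). Second, your appeal to dominated convergence only gives $\mathcal{E}_x[N^2\mathbf{1}_{\{N>M\}}]\to0$ for each fixed $x$; the step actually needed is $\sup_{x\in E}\mathcal{E}_x[N^2\mathbf{1}_{\{N>M\}}]\to0$, i.e.\ uniform integrability of $N^2$ over $x$, which a uniform second-moment bound alone does not formally imply --- but the paper's own proof of (d) relies on precisely the same uniform tail smallness (at level $N\ge\|h\|^{-1/2}$) with the same terse appeal to (H1), so on this point your argument is at exactly the paper's level of rigour rather than below it.
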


\begin{proof}
(a) is a consequence of the deterministic inequality \[0\le \prod_{i=1}^N (1-z_i)-1+\sum_{i=1}^N z_i  \] for all $N\in \mathbb{N}$ and $z_1,\dots, z_N\in [0,1]$.

For (b), we write
\begin{align*}
    \|\mathbb{V}[h_1]-\mathbb{V}[h_2]\| &\leq \|\beta \| \sup_{x\in E}  \mathcal{E}_x\left[\sum_{i,j=1;\, i\ne j}^N \big|[h_1(x_i)-h_2(x_i)]h_1(x_j)+h_2(x_i)[h_1(x_j)-h_2(x_j)]\big|\right]
\end{align*}
and the claim follows thanks to {(H1)}.

Finally, we deduce (c) and (d). Using Taylor's theorem, we have
		\begin{equation}\label{E:Taylor}
			\sA[h](x)=\beta(x)\mathcal{E}_x\bigg[\sum_{\substack{i,j=1, \\ i \ne j}}^N h(x_i)h(x_j) \int_0^1 (1-r) \prod_{\substack{k=1, \\ k\ne i,j}}^N (1-rh(x_k)) \mathrm{d}r \bigg].
		\end{equation}
	This immediately implies (c), and we also deduce that 
	\begin{align*}
			\|\sA[h]-\tfrac12\mathbb{V}[h]\| 
			& \le \sup_{x\in E} \beta(x) \mathcal{E}_x\left[\sum\nolimits_{i\ne j} h(x_i)h(x_j) \int_0^1(1-r)\bigg|1-\prod\nolimits_{k\ne i,j}(1- r h(x_k)\bigg| \, \mathrm{d}r \right]\nonumber \\
			& \le \|\beta\| \|h\|^2 \sup_x \mathcal{E}_x[N(N-1)(N\|h\| \mathbf{1}_{\{N\le \|h\|^{-1/2}\}}+2\mathbf{1}_{\{N\ge \|h\|^{-1/2}\}})].
		\end{align*}
	This is $o(\|h\|^2)$ thanks to {(H1)}.
\end{proof}

With this in hand, we proceed to show that $\su_t[g]/\varphi$ and $a_t[g]$ are small when $\su_t[g]$ is small, which will be crucial for the proof of Theorem \ref{thm:survival}. 

\begin{lemma} There exists $C\in (0,\infty)$ such that for all $t>0$ and all $g\in B_1^+(E)$,
	\label{lem: iter}
	\[
	\sup_{x \in E} \left|\frac{\su_t[g](x)}{\varphi(x)} - a_t[g] \right|  \le 
	C \left({\rm e}^{-\varepsilon t}\|1-g\| + \int_0^t {\rm e}^{-\varepsilon(t-s)}\|\su_s[g]\|^2 \d s \right).
	\]
\end{lemma}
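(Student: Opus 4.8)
The plan is to start from the evolution equation \eqref{eq:nonlin-lin} for $\su_t[g]$, which reads
\[
\su_t[g](x) = \sT_t[1-g](x) - \int_0^t \sT_s[\sA[\su_{t-s}[g]]](x)\,\d s,
\]
and to compare each of the two terms on the right-hand side with the corresponding quantity obtained after replacing the linear semigroup $\sT_s$ by its leading-order Perron--Frobenius contribution $\varphi(\cdot)\langle\,\cdot\,,\tilde\varphi\rangle$. Concretely, dividing through by $\varphi(x)$ and subtracting $a_t[g] = \langle\su_t[g],\tilde\varphi\rangle$ in the form given by \eqref{eq:at}, the difference $\su_t[g](x)/\varphi(x) - a_t[g]$ splits as
\[
\Big(\tfrac{\sT_t[1-g](x)}{\varphi(x)} - \langle 1-g,\tilde\varphi\rangle\Big) \;-\; \int_0^t \Big(\tfrac{\sT_s[\sA[\su_{t-s}[g]]](x)}{\varphi(x)} - \langle \sA[\su_{t-s}[g]],\tilde\varphi\rangle\Big)\d s.
\]
Each bracket is of the form $\varphi(x)^{-1}\sT_s[h](x) - \langle h,\tilde\varphi\rangle$ with $h\ge 0$, and by the definition \eqref{Deltat} of $\Delta_s$ (noting $\lambda = 0$ at criticality) this is bounded in absolute value by $\Delta_s\|h\|$. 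Since (H2) gives $\Delta_s = O(\mathrm{e}^{-\varepsilon s})$, we get $|\varphi(x)^{-1}\sT_t[1-g](x) - \langle 1-g,\tilde\varphi\rangle| \le \Delta_t\|1-g\| \le C\mathrm{e}^{-\varepsilon t}\|1-g\|$ for the first term.

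For the integral term, the same estimate gives a bound of $\int_0^t \Delta_s \|\sA[\su_{t-s}[g]]\|\,\d s$; but one has to be careful about which variable carries the decaying factor. Writing $h = \su_{t-s}[g]$, the bracket under the integral is controlled by $\Delta_s\|\sA[\su_{t-s}[g]]\|$, and by Lemma~\refpref{L:Gprops}{Gmod} we have $\sA[h] \le \tfrac12\mathbb{V}[h]$ pointwise, while Lemma~\refpref{L:Gprops}{Vcont} (Lipschitz continuity of $\mathbb{V}$ on bounded sets, together with $\mathbb{V}[\mathbf 0]=0$) yields $\|\mathbb{V}[h]\| \le C\|h\|^2$; hence $\|\sA[\su_{t-s}[g]]\| \le C\|\su_{t-s}[g]\|^2$. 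Thus the integral term is at most $C\int_0^t \mathrm{e}^{-\varepsilon s}\|\su_{t-s}[g]\|^2\,\d s = C\int_0^t \mathrm{e}^{-\varepsilon(t-r)}\|\su_r[g]\|^2\,\d r$ after the substitution $r = t-s$. Combining the two bounds gives exactly the claimed inequality.

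The step that needs the most care — and which I would treat as the main (mild) obstacle — is making sure the pointwise-in-$x$ estimate from \eqref{Deltat} is being applied to the right nonnegative function and that the supremum over $x\in E$ on the left-hand side is handled uniformly. Specifically, \eqref{Deltat} is stated with the supremum over $f\in B_1^+(E)$, so one should verify that $1-g \in B_1^+(E)$ (immediate, since $g\in B_1^+(E)$) and, for the integrand, that $\sA[\su_{t-s}[g]]/\|\sA[\su_{t-s}[g]]\|$ lies in $B_1^+(E)$ — i.e. apply $\Delta_s$ to the normalised function and then multiply back by the norm, using homogeneity of the bracket in $h$. One also uses $\su_t[g] \in B_1^+(E)$ (clear from its probabilistic representation $\su_t[g](x) = \mathbb{E}_{\delta_x}[1 - \prod_i g(x_i(t))] \in [0,1]$) so that the Lipschitz bound from Lemma~\refpref{L:Gprops}{Vcont} applies on a fixed ball. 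None of this is deep, but it is where the constants and the uniformity are pinned down; everything else is a direct assembly of the two displayed bounds.
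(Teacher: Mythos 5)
Your decomposition is exactly the paper's: split $\su_t[g](x)/\varphi(x)-a_t[g]$ using \eqref{eq:nonlin-lin} and \eqref{eq:at}, bound each bracket of the form $\varphi(x)^{-1}\sT_s[h](x)-\langle h,\tilde\varphi\rangle$ by $\Delta_s\|h\|$ via \eqref{Deltat} and (H2), substitute $r=t-s$ in the integral, and control $\sA$ through Lemma \refpref{L:Gprops}{Gmod}. All of that, including the normalisation remark about $B_1^+(E)$, is fine and matches the paper's proof.

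The one step that is wrongly justified is the final quadratic bound. Lemma \refpref{L:Gprops}{Vcont} asserts $\|\mathbb{V}[h_1]-\mathbb{V}[h_2]\|\le C\|h_1-h_2\|$ with a constant $C$ fixed once the ball $\{\|f\|\le k\}$ is fixed, so taking $h_2=\mathbf{0}$ yields only the linear estimate $\|\mathbb{V}[h]\|\le C\|h\|$, not $\|\mathbb{V}[h]\|\le C\|h\|^2$ as you claim. The linear bound is not sufficient: since $\|\su_s[g]\|\le 1$, an integrand $\|\su_s[g]\|$ is strictly weaker than $\|\su_s[g]\|^2$, and the square is precisely what the lemma is for (later it is combined with $\|\su_s\|=O(1/s)$ to produce the $O(t^{-2})$ error in the proof of Theorem \ref{thm:survival}). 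The repair is immediate and is what the paper does: bound $\mathbb{V}$ directly from its definition, namely $\mathbb{V}[h](x)=\beta(x)\mathcal{E}_x\bigl[\sum_{i\neq j}h(x_i)h(x_j)\bigr]\le \|\beta\|\sup_{y\in E}\mathcal{E}_y[N(N-1)]\,\|h\|^2$, which is finite by (H1); together with $\|\sA[h]\|\le\tfrac12\|\mathbb{V}[h]\|$ this gives $\|\sA[\su_s[g]]\|\le C\|\su_s[g]\|^2$, and the claimed inequality follows. So: same route as the paper, correct once the appeal to Lipschitz continuity is replaced by this direct use of (H1).
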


\begin{proof} We have
	\begin{align*}
		\left|\frac{\su_t[g](x)}{\varphi(x)}-a_t[g]\right| & \le \left|\frac{\sT_t[1-g](x)}{\varphi(x)}-\langle 1-g,\tilde\varphi\rangle \right| +\int_0^t \left| \frac{\sT_{t-s}[\sA[\su_s[g]]](x)}{\varphi(x)} - \langle  \sA[\su_s[g]] ,\tilde\varphi\rangle \right| \, \mathrm{d} s \\ 
		& \le C\left( {\rm e}^{-\varepsilon t}\|1-g\| + \int_0^t {\rm e}^{-\varepsilon (t-s)}\|\sA[\su_s[g]]\| \, \mathrm{d}s\right),
	\end{align*}
	where the second line follows from {(H2)}. The lemma then follows since $\|\sA[h]\| \le \tfrac{1}{2}  \|\mathbb{V}[h]\|$ (Lemma \ref{Gmod}) and $\|\mathbb{V}[\su_s[g]]\| \le \sup\nolimits_{x\in E}\mathcal{E}_x[N(N-1)]\|\beta\| \|\su_s[g]\|^2$, which is  thanks to Assumption {(H1)}.
\end{proof}

\begin{lemma}\label{lem: unif-small} 
Under the assumptions of Theorem \ref{thm:survival}, there exists $t_0\in (0,\infty)$ and a constant $C >0 $ such that for all $t\ge t_{0}$ and all $g\in B^+_1(E)$,
\begin{equation}
   a_t \le \frac{C}{t}\,\text{ and } \,\sup_{x \in E}\su_{t}(x) \le \frac{C}{t}.
   \label{eq: a-upperbd BMPI}
\end{equation}
\end{lemma}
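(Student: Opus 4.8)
\textbf{Proof proposal for Lemma \ref{lem: unif-small}.}

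The plan is to establish the upper bound $a_t \le C/t$ first, and then transfer it to a pointwise bound on $\su_t(x)/\varphi(x)$ via Lemma \ref{lem: iter}. The starting point is the integrated evolution equation \eqref{eq:at} with $g = \mathbf{0}$, which reads $a_t = 1 - \int_0^t \langle \sA[\su_{t-s}],\tilde\varphi\rangle\,\d s$, equivalently $\tfrac{\d}{\d t} a_t = -\langle \sA[\su_t],\tilde\varphi\rangle \le 0$, so $a_t$ is non-increasing. The key is to get a differential inequality of Riccati type: I want to bound $\langle \sA[\su_t],\tilde\varphi\rangle$ below by a constant multiple of $a_t^2$. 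By Lemma \ref{GVsmall} (or more simply by writing $\sA \ge \sA - \tfrac12\mathbb V_M + \tfrac12 \mathbb V_M$ and using Lemma \ref{Gmod}-type control, together with the representation \eqref{E:Taylor}), once $\su_t$ is uniformly small — which happens for $t$ large since $\su_t(x) \le \su_t[g](x) \le \sT_t[\mathbf 1](x) \le C\varphi(x){\rm e}^{0\cdot t}$ is bounded and $a_t \downarrow a_\infty$, with $a_\infty = 0$ by {(H3)} (extinction is almost sure) together with dominated convergence — we have $\sA[\su_t](x) \ge \tfrac14 \mathbb V_M[\su_t](x)$ for all large $t$ and some fixed truncation level $M$. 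Then assumption {(H4)} gives
\[
\langle \sA[\su_t],\tilde\varphi\rangle \ge \tfrac14\langle \mathbb V_M[\su_t],\tilde\varphi\rangle \ge \tfrac14 K \langle \su_t,\tilde\varphi\rangle^2 = \tfrac{K}{4} a_t^2,
\]
so for $t \ge t_1$, $\tfrac{\d}{\d t} a_t \le -\tfrac{K}{4} a_t^2$, which integrates to $a_t \le (a_{t_1}^{-1} + \tfrac{K}{4}(t - t_1))^{-1} \le C/t$ for $t \ge t_0$ with $t_0 = 2t_1$, say.

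The second half is a bootstrap using Lemma \ref{lem: iter} with $g = \mathbf 0$:
\[
\sup_{x}\Big|\frac{\su_t(x)}{\varphi(x)} - a_t\Big| \le C\Big({\rm e}^{-\varepsilon t} + \int_0^t {\rm e}^{-\varepsilon(t-s)}\|\su_s\|^2\,\d s\Big).
\]
Since $\su_s(x) \le \|\varphi\| a_s + \varphi(x)\cdot C({\rm e}^{-\varepsilon s}+\cdots)$ crudely, and more usefully $\|\su_s\| \le C$ for all $s$ and $\|\su_s\| \to 0$, a first pass gives $\|\su_s\|^2 \le C/s$ for $s$ large (using that $\su_s(x)/\varphi(x)$ is within $o(1)$ of $a_s \le C/s$), hence $\int_0^t {\rm e}^{-\varepsilon(t-s)}\|\su_s\|^2\d s \le C/t$ by splitting the integral at $t/2$ and using the exponential kernel's concentration near $s = t$. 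Combined with $a_t \le C/t$, this yields $\sup_x \su_t(x)/\varphi(x) \le C/t$, i.e. $\sup_x \su_t(x) \le C\|\varphi\|/t$, which is the claimed bound. (The statement mentions ``for all $g \in B_1^+(E)$'' but the displayed conclusion \eqref{eq: a-upperbd BMPI} only involves $a_t$ and $\su_t = \su_t[\mathbf 0]$, so the uniformity over $g$ is vacuous here, or is recorded for use with the already-established monotonicity $\su_t[g] \ge \su_t[\mathbf 0]$ in one direction; I would simply prove the $g=\mathbf 0$ statement.)

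The main obstacle is the justification that $\su_t$ becomes uniformly small as $t \to \infty$, so that the quadratic lower bound on $\sA$ via the truncated operator $\mathbb V_M$ and {(H4)} kicks in. This requires knowing $a_\infty = 0$, which follows from {(H3)} since $\su_t(x) = \mathbb P_{\delta_x}(\zeta > t) \downarrow \mathbb P_{\delta_x}(\zeta = \infty) = 0$, combined with dominated convergence ($\su_t \le \mathbf 1$, $\tilde\varphi$ finite) to get $a_t = \langle \su_t,\tilde\varphi\rangle \to 0$; then Lemma \ref{lem: iter} upgrades this to $\|\su_t\| \to 0$. Once uniform smallness is in hand, choosing the truncation level $M$ and the threshold $t_1$ so that $\sA[\su_t] \ge \tfrac14\mathbb V_M[\su_t]$ pointwise is a routine consequence of \eqref{E:Taylor} together with dominated convergence in the expectation defining $\sA$, and the rest is the elementary Riccati integration above.
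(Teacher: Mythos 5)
Your overall strategy --- differentiate $a_t$, use the Taylor representation \eqref{E:Taylor} together with the truncated operator $\mathbb{V}_M$ and (H4) to get the Riccati inequality $a_t'\le -Ca_t^2$ once $\|\su_t\|$ is uniformly small, then integrate --- is exactly the paper's, and that part is sound (your constant $\tfrac14$ versus the paper's $2^{-M}$ is immaterial). The genuine gap is the justification of the uniform smallness $\|\su_t\|\to 0$, which is precisely what licenses the Riccati step. You correctly get $a_t\to 0$ from (H3) and dominated convergence, but then assert that Lemma \ref{lem: iter} ``upgrades'' this to $\|\su_t\|\to 0$. It cannot: with $g=\mathbf{0}$ the error term in Lemma \ref{lem: iter} is $C\bigl(\mathrm{e}^{-\varepsilon t}+\int_0^t \mathrm{e}^{-\varepsilon(t-s)}\|\su_s\|^2\,\mathrm{d}s\bigr)$, which is not small unless you already know $\|\su_s\|$ is small. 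Splitting the integral at $t/2$ and setting $L=\limsup_{t\to\infty}\|\su_t\|$, the best self-consistent bound is $L\le \tfrac{C\|\varphi\|}{\varepsilon}L^2$, which only tells you $L=0$ \emph{or} $L\ge \varepsilon/(C\|\varphi\|)$ --- inconclusive. (Your observation $\su_t\le \sT_t[\mathbf 1]\le C\varphi$ gives boundedness, not smallness; also the inequality $\su_t\le \su_t[g]$ is backwards, since $g\mapsto\su_t[g]$ is decreasing, though this is harmless.) The missing idea is the one the paper uses: by the Markov branching property $\su_{t+s}=\su_t[1-\su_s]$, and positivity of $\sA$ (Lemma \ref{Gneg}) gives $\su_{t+s}\le \sT_t[\su_s]$ as in \eqref{eq: ut-bd}; then (H2) yields $\|\su_{t+s}\|\le a_s\|\varphi\|+O(\mathrm{e}^{-\varepsilon t})$ as in \eqref{E:ut-a-bd}, and letting $t\to\infty$ then $s\to\infty$ converts $a_s\to0$ into $\|\su_t\|\to 0$ with no quadratic error term to fight.

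The same device also repairs and simplifies your second half. Your bootstrap through Lemma \ref{lem: iter} (``a first pass gives $\|\su_s\|^2\le C/s$'') both presupposes the unproven uniform smallness and is incomplete as written: iterating $\|\su_t\|\le\|\varphi\|a_t+C\sup_{s\ge t/2}\|\su_s\|^2+O(\mathrm{e}^{-\varepsilon t/2})$ requires an explicit induction and at first pass only produces $O(\log t/t)$ unless handled with more care. The paper bypasses this entirely: once $a_t\le C/t$ is established, apply \eqref{E:ut-a-bd} with $s=t$ to get $\|\su_{2t}\|\le a_t\|\varphi\|+O(\mathrm{e}^{-\varepsilon t})\le C'/t$, which is the claimed pointwise bound.
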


\begin{proof}
We first observe from {(H3)} that 
\[
  a_t \to 0 \, \text{ and }  \,\sup_{x \in E}\su_{t}(x) \to 0
\]
as $t \to \infty$. The convergence of $a_t$ to $0$ follows from {(H3)} 
and dominated convergence. To prove uniform convergence of $\su_{t}$ to zero, let us note that $\su_{t+s}(x)=\su_{t}[1-\su_{s}](x)$ by the Markov branching property. We therefore have (also using Lemma \ref{Gneg}) that
\begin{equation}\label{eq: ut-bd}
0\le \su_{t+s}(x)=\sT_{t}[\su_{s}](x)-\int_{0}^{t}\sT_{l}\big[\sA[\su_{t+s-\ell}]\big](x)\d \ell\le \sT_{t}[\su_{s}](x)
\end{equation}
and so 
\begin{equation}\label{E:ut-a-bd}
	 \|\su_{t+s}\| \le \| \sT_t[\su_s] \| \le a_s\|\varphi\|+O({\rm e}^{-\varepsilon t}) 
	 \end{equation}
by {(H2)}. Taking $t$ and then $s$ to infinity gives that $\|\su_t\| \to 0$ as $t\to \infty$.

Now we prove the required upper bound on $a_t$ and $\|\su_t\|$. First note that \eqref{eq:at} implies that 
\[
a_t=a_0+\int_0^t \langle  \sA[u_s], \tilde\varphi \rangle \mathrm{d}s
\]
where the integrand is bounded due to Lemma \ref{Gneg}. Therefore $a$ is differentiable with $a'_t=-\langle \sA[\su_t], \tilde\varphi \rangle$ for $t\ge 0$. 

Next, by Taylor's Theorem \eqref{E:Taylor}, we deduce that if $\|h\|\le 1/2$, then $ \sA[h](x)\geq 2^{-M} \mathbb{V}_M[h](x)$ for any $M\in \mathbb{N}$.
We therefore obtain that for $t\ge t_0$, with $t_0$ chosen so that $\sup\nolimits_{t\ge t_0}\|\su_{t}\| \le 1/2$, 
\[
a'(t)=-\langle  \sA[\su_t], \tilde\varphi \rangle \le -2^{-M}\langle  \mathbb{V}_M[\su_t] ,\tilde\varphi\rangle \le -C \langle \su_t, \tilde{\varphi}\rangle^2 = -C a_t^2 
\]
for some $C\in (0,\infty)$, where in the second inequality we have used {(H4)}
with the values of $M$ and $C$ there.

This yields
\[
\frac{\mathrm{d}}{\mathrm{d}t}\left(\frac{1}{a_t}\right) \ge C \text{ for } t\ge t_0.
\]
Integrating from $t_0$ to $t$ we obtain the desired upper bound for $a_t$. The upper bound for $\su_t$ follows from the same argument as that given in \eqref{E:ut-a-bd}.
\end{proof}


We are now ready to prove Theorem \ref{thm:survival}, which now entails showing that the long-term behaviour of $\su_t/\varphi$ and $a_t$ are the same.

\begin{proof}[Proof of Theorem \ref{thm:survival}]
Applying Lemma \ref{lem: iter}, we have 
\begin{equation*}
    \sup_{x \in E}\left|\frac{\su_{t}(x)}{\varphi(x)}-a_t\right|  \le 
	C \left({\rm e}^{-\varepsilon t} + \int_0^{t/2} {\rm e}^{-\varepsilon(t-s)}\|\su_s\|^2 \d s + \int_{t/2}^t {\rm e}^{-\varepsilon(t-s)}\|\su_s\|^2 \d s \right).
\end{equation*}
Bounding $\|\su_s\|$ by 1, we see that $\int_0^{t/2} {\rm e}^{-\varepsilon(t-s)}\|\su_s\|^2 \d s = O({\rm e}^{-\varepsilon t/2})$, and using the bound given in Lemma \ref{lem: unif-small}, we obtain $\int_{t/2}^t {\rm e}^{-\varepsilon(t-s)}\|\su_s\|^2 \d s = O(t^{-2})$. Therefore,
\[
\sup_{x \in E}\left|\frac{\su_{t}(x)}{\varphi(x)}-a_t\right|=O(t^{-2}), \quad t\to\infty. 
\]
On the other hand, from Lemma \ref{lem: a-lowerbd}, we have that $a_t^{-1}=O(t)$. It follows that 
\begin{equation}
\label{eq: u_t-a_t}
\sup_{x \in E}\left|\frac{\su_{t}(x)}{\varphi(x)a_t}-1\right|=O(t^{-1}), \quad t\to\infty. 
\end{equation}
Using  Lemma \ref{Vcont},
we deduce that
\begin{align}\notag
\sup_{x \in E}a_t^{-2}\left|{\V}[\su_{t}](x)-{\V}[a_t\varphi](x)]\right|&=\sup_{x \in E}\left|{\V}[{\su_{t}}/{a_t}](x)-{\V}[\varphi](x)\right| \\ \label{eq: bd-V}
&\le C \sup_{x \in E}\left|\frac{\su_{t}(x)}{a_t}-\varphi(x)\right| =O(t^{-1}). 
\end{align}
Therefore, appealing to basic calculus from \eqref{eq:at}, for all $t\ge t_{0}$,
\[
\left|\frac{1}{ta_t}-\frac{1}{ta_{t_{0}}}-\langle \tfrac12 \mathbb{V}[\varphi], \tilde\varphi \rangle \right|  = \frac 1t \left|\int_{t_{0}}^{t} \frac{\langle  \sA[\su_{s}],\tilde\varphi\rangle}{a_s^2} \d s - \int_0^t \langle \tfrac12 \mathbb{V}[\varphi], \tilde\varphi \rangle \d s \right|.\]
Noting that $\mathbb{V}[\varphi a_s] = a_s^2\mathbb{V}[\varphi]$,  we can bound the right-hand side above by 
\[\frac1t\left| \int_0^{t_0} \langle \tfrac12 \mathbb{V}[\varphi], \tilde\varphi \rangle \right| 
+ \frac1t \left| \int_{t_0}^t \frac{\tfrac12 \langle \mathbb{V}[a_s\varphi]- \mathbb{V}[\su_s],\tilde\varphi\rangle}{a_s^2}  \d s\right| 
+ \frac1t \left| \int_{t_0}^t \frac{\langle \tfrac12\mathbb{V}[\su_s]- \sA[\su_s],\tilde\varphi \rangle }{a_s^2}  \d s\right|.\]

The first term in the expression above clearly converges to $0$ as $t\to \infty$, while the second term converges to $0$ using  \eqref{eq: bd-V}. The final term converges to $0$ again using the lower bound $a_s\ge C/s$ from Lemma \ref{lem: a-lowerbd} and Lemma \ref{GVsmall}.

Since $1/(ta_{t_0})\to 0$ as $t\to \infty$, this implies that 
\[ \frac{1}{ta_t} \to \langle \tfrac12 \mathbb{V}[\varphi],\tilde\varphi \rangle, \quad \text{ as } t\to \infty, \] 
in other words,
\[
a_t\sim \frac{2}{ \langle  \mathbb{V}[\varphi] ,\tilde\varphi\rangle t},\quad \text{ as }t\to\infty.
\]
 The desired asymptotic for $\su_{t}$ then follows from \eqref{eq: u_t-a_t}. 
\end{proof}

\subsection{Non-local Superprocesses}


We first note that, for $\theta\in\mathbb R$ and $\mu\in M(E)$,
\begin{equation}
{\rm e}^{-\langle\sV_t, \mu\rangle}: = \lim_{\theta\to\infty} {\rm e}^{-\langle\sV_t[\theta],\mu\rangle}= \lim_{\theta\to\infty}\mathbb{E}_\mu\big[{\rm e}^{-\theta\langle{1},{X_t}\rangle}\big] = \mathbb{P}_{\mu}(\zeta\leq t),
\label{thetalimit}
\end{equation}
and hence
\begin{equation}
 \mathbb{P}_{\mu}(\zeta> t) = 1-{\rm e}^{-\langle\sV_t, \mu\rangle}, \qquad \mu\in M( E),\, t\geq0.
 \label{Vtprob}
\end{equation}
Under assumption {(H3)}, 
we notice that $\langle\sV_t, \mu\rangle\to0$ as $t\to\infty$ and consequently \eqref{Vtprob} implies that 
\begin{equation}
\lim_{t\to\infty}\frac{1}{\langle\sV_t, \mu\rangle} \mathbb{P}_{\mu}(\zeta> t) = 1.
\label{vtnotut}
\end{equation}
Thus, in order to understand the decay of the survival probability, it suffices to study the decay of $\sV_t$. 
For this reason, we will conveniently work with 
\begin{equation}
a_{t}[f] = \langle \sV_t [f], \tilde\varphi\rangle = \langle f, \tilde\varphi\rangle  - \int_{0}^{t} \langle \sJ[{\sV}_{t-s}[f]] , \tilde\varphi\rangle\d s, \quad f \in B^+(E),
\label{atint}
\end{equation}
where the second equality follows from integrating \eqref{nonlinvJ} with respect to $\tilde\varphi$ and {(H2)}. It follows that for any $t,t_0>0$,
\begin{equation}
a_{t+t_0}[f] = a_{t_0}[f]  - \int_{0}^{t} \langle \sJ[{\sV}_{s}[\sV_{t_0}[f]]] , \tilde\varphi\rangle\d s.
\label{atint2}
\end{equation}
Thus, with $a_t : = \lim_{\theta \to \infty}a_t[\theta] = \langle \sV_t, \tilde{\varphi}\rangle$, it follows that 
\begin{equation}
a_{t+t_0} = a_{t_0}   - \int_{0}^{t} \langle \sJ[{\sV}_{s}[\sV_{t_0}]] , \tilde\varphi\rangle\d s.
\label{atint3}
\end{equation}

The strategy is thus to prove that $a_t$ and $\sV_t$ are asymptotically of order $1/t$, as in the proof methodology of \cite{HHKW}. 

The proof {\color{black} of Theorem \ref{thm:survival}}  for the superprocess setting is almost verbatim the same as in the previous section. In the interests of brevity, we leave the remainder of the proof of Theorem \ref{thm:survival} for non-local superprocesses as an exercise, offering as assistance below the analogue of Lemma \ref{L:Gprops}, which is a key ingredient, and referring for a full proof {\color{black} of Theorem \ref{thm:survival}}  to \cite{thesis}.

\begin{lemma}[Properties of $\sJ$ and $\mathbb{V}$]\label{L:Jprops} Suppose that assumption {(H1)} holds. 
\begin{enumerate}[label=(\alph*), ref=\ref{L:Jprops}\alph*]\setlength{\itemsep}{0em}
\item We have $\mathtt{J}[h](x) \geq 0$ for all $h\in B^+(E)$, $x\in E.$ 
\label{Jpos}
\item There exists a constant $C\in (0,\infty)$ such that $
\|\mathbb{V}[h_1]-\mathbb{V}[h_2]\| \le C\|h_1-h_2\|$ for all functions $h_1, h_2\in \{f\in B^+(E): \|f\|\leq k\}$ for some $k>0$.
\label{VcontSPI}
\item $\mathtt{J}[h](x)\le \tfrac{1}{2} \mathbb{V}[h](x)$ for all $h\in B^+(E)$, $x\in E.$ 
\label{Jmod}
\item  $\|\mathtt{J}[h]-\tfrac12 \mathbb{V}[h]\| =  o(\|h\|^2)$ as $\|h\|\to 0$, $h\in B^+(E)$.  
\label{JVsmall}
\end{enumerate}
\end{lemma}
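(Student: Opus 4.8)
The plan is to prove Lemma \ref{L:Jprops} by direct inspection of the explicit formula \eqref{J} for $\mathtt J$ and the formula \eqref{VVdef} for $\mathbb V$, mirroring the structure of the proof of Lemma \ref{L:Gprops} but replacing the combinatorial manipulation of products by elementary estimates on the scalar functions $\lambda\mapsto {\rm e}^{-\lambda}-1+\lambda$ and their integrated analogue over jump measures. Recall that for $h\in B^+(E)$,
\[
\mathtt J[h](x)=c(x)h(x)^2+\int_{(0,\infty)}({\rm e}^{-h(x)y}-1+h(x)y)\nu(x,\d y)+\beta(x)\int_{M(E)^\circ}({\rm e}^{-\langle h,\nu\rangle}-1+\langle h,\nu\rangle)\Gamma(x,\d\nu),
\]
while $\mathbb V[h](x)=(2c(x)+\int_0^\infty y^2\nu(x,\d y))h(x)^2+\beta(x)\int_{M(E)^\circ}\langle h,\nu\rangle^2\Gamma(x,\d\nu)$.

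For part (a), the key elementary fact is that ${\rm e}^{-u}-1+u\ge0$ for all $u\ge0$; applying this pointwise with $u=h(x)y$ inside the $\nu$-integral and with $u=\langle h,\nu\rangle$ inside the $\Gamma$-integral, and noting $c(x)h(x)^2\ge0$, gives $\mathtt J[h](x)\ge0$ immediately. For part (c), the complementary bound $0\le{\rm e}^{-u}-1+u\le u^2/2$ for $u\ge0$ yields ${\rm e}^{-h(x)y}-1+h(x)y\le\tfrac12 h(x)^2y^2$ and ${\rm e}^{-\langle h,\nu\rangle}-1+\langle h,\nu\rangle\le\tfrac12\langle h,\nu\rangle^2$; summing the three contributions reproduces exactly $\tfrac12\mathbb V[h](x)$. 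Part (b) is literally the same statement as Lemma \ref{Vcont} and has essentially the same proof: write $\langle h_1,\nu\rangle^2-\langle h_2,\nu\rangle^2=\langle h_1-h_2,\nu\rangle\langle h_1+h_2,\nu\rangle$ and $h_1(x)^2-h_2(x)^2=(h_1(x)-h_2(x))(h_1(x)+h_2(x))$, bound $\langle h_1+h_2,\nu\rangle\le 2k\langle1,\nu\rangle$, and use that $(x\wedge x^2)\nu(x,\d y)$, $c$, and $\langle1,\nu\rangle\Gamma(x,\d\nu)$ are uniformly bounded kernels together with {(H1)} (which gives uniform boundedness of $\int y^2\nu(x,\d y)$ and $\int\langle1,\nu\rangle^2\Gamma(x,\d\nu)$). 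Since $\|h_i\|\le k$, the $\nu$-integral is controlled by $\int y^2\nu(x,\d y)$ on $\{y\le1\}$ and by $\int y\,\nu(x,\d y)$ on $\{y>1\}$ (both uniformly bounded), so everything is $\le C\|h_1-h_2\|$.

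Part (d) is the only part requiring a genuine truncation argument, exactly as in Lemma \ref{GVsmall}. The quantity to control is $\|\mathtt J[h]-\tfrac12\mathbb V[h]\|$, i.e. the supremum over $x$ of
\[
\int_{(0,\infty)}\Big(\tfrac12 h(x)^2y^2-({\rm e}^{-h(x)y}-1+h(x)y)\Big)\nu(x,\d y)+\beta(x)\int_{M(E)^\circ}\Big(\tfrac12\langle h,\nu\rangle^2-({\rm e}^{-\langle h,\nu\rangle}-1+\langle h,\nu\rangle)\Big)\Gamma(x,\d\nu),
\]
each integrand being nonnegative by the bound used in (c). Using the refined estimate $0\le \tfrac12 u^2-({\rm e}^{-u}-1+u)\le \min(u^3/6,\,u^2)$ for $u\ge0$, one splits the $\nu$-integral at $y=\|h\|^{-1/2}$: on $\{h(x)y\le\|h\|^{1/2}\}$ the integrand is $\le \tfrac16 h(x)^3 y^3\le\tfrac16\|h\|^{5/2}y^2$, controlled by $\|h\|^{1/2}\cdot\sup_x\int y^2\nu(x,\d y)$, while on $\{y>\|h\|^{-1/2}\}$ it is $\le h(x)^2y^2\le\|h\|^2 y^2$ and $\int_{\{y>\|h\|^{-1/2}\}}y^2\nu(x,\d y)\to0$ uniformly in $x$ as $\|h\|\to0$ by {(H1)} (uniform integrability of $y^2$ against $\nu$). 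The $\Gamma$-term is handled identically with $\langle h,\nu\rangle$ in place of $h(x)y$ and $\langle1,\nu\rangle$ playing the role of $y$, using $\langle h,\nu\rangle\le\|h\|\langle1,\nu\rangle$ and the uniform integrability of $\langle1,\nu\rangle^2$ against $\Gamma(x,\d\nu)$ from {(H1)}. Combining, $\|\mathtt J[h]-\tfrac12\mathbb V[h]\|=o(\|h\|^2)$. The main (mild) obstacle is organising the truncation in (d) so that the uniformity in $x\in E$ is transparent; this is exactly where {(H1)} is used in its full strength, supplying uniform integrability rather than mere finiteness of the second moments.
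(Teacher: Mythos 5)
Your proof is correct and takes essentially the same route as the paper: (a) and (c) from $0\le \mathrm{e}^{-z}-1+z\le z^2/2$, (b) by factoring differences of squares and invoking (H1), and (d) by the same truncation at level $\|h\|^{-1/2}$ using the cubic/quadratic bounds on $\tfrac12 z^2-(\mathrm{e}^{-z}-1+z)$ (the paper's $m_\star$), including the same appeal to (H1) for the uniform control of the second-moment tails. One negligible slip: in (b) the remark that the tail $\{y>1\}$ is ``controlled by $\int y\,\nu(x,\mathrm{d}y)$'' is not literally right, but it is also not needed, since (H1) gives $\sup_{x\in E}\int_0^\infty y^2\,\nu(x,\mathrm{d}y)<\infty$ directly.
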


\begin{proof}
(a) and (c) follows trivially from the deterministic inequalities \[0 \leq \mathrm{e}^{-z}-1+z \leq \frac{z^2}{2}, \qquad z\in[0,\infty)]. \] 

For (b), we write 
\begin{align*}
    \|\mathbb{V}[h_1]-\mathbb{V}[h_2]\| &\leq  \sup_{x\in E} \left(2c(x) + \int_0^\infty y^2 \nu (x,\dd y) \right)\|h_1+h_2\|\|h_1-h_2\| \\
    &\quad + \sup_{x\in E} \beta(x)\int_{M(E)^\circ} \langle \|h_1+h_2\|, \nu\rangle \langle \|h_1-h_2\|, \nu\rangle | \Gamma(x,\d\nu)
\end{align*}
and the claim follows from {(H1)}.

For (d), the map ${m_{\star}}: z\in [0,\infty) \to 1-z+\frac{1}{2}z^2-\mathrm{e}^{-z}$ is a non-negative increasing function bounded above by $z^2$ and $z^3$, which allows us to write
\begin{align*}
    |\mathtt{J}[h](x)- \tfrac{1}{2} \mathbb{V}[h](x)| &\leq \int_0^\infty {m_{\star}}(\|h\| y) \nu (x,\dd y) + \beta(x)\int_{M(E)^\circ} {m_{\star}}(\langle \|h\|, \nu\rangle ) \Gamma(x,\d\nu) \\
    &\leq \|h\|^2 \int_0^\infty y^2 (\|h\|^{1/2} \mathbf{1}_{\{y\leq \|h\|^{-1/2}\}} + \mathbf{1}_{\{y\geq \|h\|^{-1/2}\}} ) \nu (x,\dd y) \\
    &\quad + \|h\|^2 \beta(x)\int_0^\infty \langle 1,\nu\rangle^2 (\|h\|^{1/2} \mathbf{1}_{\{\langle 1,\nu\rangle\leq \|h\|^{-1/2}\}} + \mathbf{1}_{\{\langle 1,\nu\rangle\geq \|h\|^{-1/2}\}} ) \Gamma (x,\dd \nu)
\end{align*}
	 and this is $o(\|h\|^2)$ thanks to {(H1)}.
\end{proof}

\section{Proof of Theorem \ref{thm:Yaglom}}
We give the proof only for the setting of non-local branching Markov processes, again noting that {\color{black}the proof in the setting} of non-local superprocesses is almost verbatim, taking account of the fact that the role of $\su_t$ is played by $\sV_t$. The reader is again referred to \cite{thesis} for a full proof.


We are guided by  the argument in the proof of Theorem 1.3 in \cite{HHKW}. Given $f\in B^+(E)$, let us consider $\Tilde{f} = f - \langle f, \tilde\varphi \rangle \varphi$. The first assertion is that $\langle \Tilde{f}, X_t\rangle/t$ converges weakly under $\mathbb{P}_\mu (\cdot \mid \zeta >t)$ to 0 as $t\to\infty$. Indeed, by Markov's inequality,
\begin{equation*}
    \mathbb{P}_\mu \left(\left.\frac{|\langle \Tilde{f}, X_t\rangle |}{t} >\epsilon \right| \zeta >t\right) \leq \frac{\langle \mathtt{T}_t^{(2)}[\Tilde{f}],\mu\rangle}{\epsilon^2 t^2 \mathbb{P}_\mu \left(\langle 1, X_t\rangle >0\right)},
\end{equation*}
where $ \mathtt{T}_t^{(2)}[\tilde{f}](x) = \mathbb{E}_{\delta_x}[\langle \tilde{f}, X_t\rangle^2]$, for $t\geq0$, $x\in E$.
Moreover, the asymptotic behaviour of $t \mathbb{P}_\mu \left(\langle 1, X_t\rangle >0\right)$ as $t\to\infty$ is given in Theorem \ref{thm:survival}. Now, from Theorem 1 of \cite{GHK},
\begin{equation}
\label{Delta2}
    \frac{1}{t} \langle \mathtt{T}_t^{(2)}[\Tilde{f}],\mu\rangle \leq (\langle\Tilde{f},\Tilde{\varphi}\rangle^2 \langle\mathbb{V}[\varphi],\Tilde{\varphi}\rangle + \|\Tilde{f}\|^2 \Delta_t^{(2)})\langle \varphi , \mu \rangle,
\end{equation}
where 
$\Delta_t^{(2)} = \sup_{x\in E, \, f\in B^+_1(E)} |s^{-1}\varphi(x)^{-1}\sT_t^{(2)}\bra{f}(x)-\proint{f}{\tilde{\varphi}}^2{\proint{\mathbb{V}[\varphi]}{\tilde\varphi}}|$, which tends to zero as $t\to \infty$ from the aforementioned theorem.
Hence, as  $\langle\Tilde{f},\Tilde{\varphi}\rangle = 0$, we see that the right-hand side of \eqref{Delta2} tends to zero as $t\to\infty$.

Then, applying Slutsky's Theorem, it is enough to show that
\begin{equation*}
\lim_{t \to \infty}\mathbb E_{\mu}\left[ {\rm exp}\left(-\frac{\theta}{t} \langle f, \Tilde{\varphi}\rangle\langle \varphi, X_t\rangle \right) \bigg| \zeta > t \right] = \frac{1}{1 + \tfrac12 \theta\langle f,\tilde\varphi\rangle \langle \mathbb V[\varphi],\tilde\varphi \rangle},
\end{equation*}
or equivalently,
\begin{equation}\label{eq:slutsky BMPI}
\lim_{t \to \infty}\frac{\mathbb E_{\mu}\left[1- {\rm exp}\left(-{\theta} \langle f, \Tilde{\varphi}\rangle\langle \varphi, X_t\rangle/{t} \right) \right]}{\mathbb{P}_\mu \left(\langle 1, X_t\rangle >0\right)} = \frac{\tfrac12 \theta\langle f,\tilde\varphi\rangle \langle \mathbb V[\varphi],\tilde\varphi \rangle}{1 + \tfrac12 \theta\langle f,\tilde\varphi\rangle \langle \mathbb V[\varphi],\tilde\varphi \rangle}.
\end{equation}

Fix $\Tilde{\theta}\in (0,\infty)$ and define
\[ g_t(x):={\rm e}^{-\frac{\Tilde{\theta} \varphi(x)}{t}}. \]
We note that for $0\le s\le t$
\begin{align*}
\su_s[g_t](x)&=\mathbb{E}_{\delta_x}[1-\exp(-{\Tilde{\theta} \langle {\varphi , X_s} \rangle}/{t})]\notag\\
&
=\varphi(x)\mathbb{E}_{\delta_x}^\varphi\left[\frac{1-\exp(-{\Tilde{\theta} \langle {\varphi , X_s} \rangle}/{t})}{\langle {\varphi , X_s} \rangle }\right]\notag\\
&
=\frac{\Tilde{\theta}\varphi(x)}{t}\mathbb{E}_{\delta_x}^\varphi\left[\frac{1-\exp(-{\Tilde{\theta} \langle {\varphi , X_s} \rangle}/{t})}{{\Tilde{\theta}}\langle {\varphi , X_s} \rangle/{t} }\right].
\end{align*}
Since 
$ x^{-1}(1-{\rm e}^{-x})=\int_0^1 {\rm e}^{-ux} \, \d u$
for $x>0$, this yields that 
\begin{equation}\label{E:usgupper}
\su_s[g_t](x) = \frac{\Tilde{\theta}\varphi(x)}{t}\int_0^1 \mathbb{E}_{\delta_x}^\varphi[
\exp(- u {\Tilde{\theta} \langle {\varphi , X_s} \rangle}/{t})] \d u \le \frac{\Tilde{\theta} \varphi(x)}{t}, \qquad \forall s\in [0,t].
\end{equation}

We have 
\begin{equation}
	\left|\frac{1}{ta_t[g_t]}-\frac{1}{ta_0[g_t]}-\langle \tfrac12  \mathbb{V}[\varphi],\tilde\varphi \rangle \right|  = \frac 1t \left|\int_{0}^{t} \frac{\langle {{ \sA }[\su_{s}[g_t]],  \tilde\varphi}\rangle}{a_s[g_t]^2} \d s - \int_0^t \langle \tfrac12  \mathbb{V}[\varphi], \tilde\varphi \rangle \d s \right|  
	\label{modinverse}
	\end{equation}
	which we can bound above by 
	\begin{equation}
	\frac1t \left| \int_0^t \frac{\tfrac12\langle \mathbb{V}[a_s[g_t]\varphi]- \mathbb{V}[\su_s[g_t]],\tilde\varphi\rangle}{a_s[g_t]^2}  \d s\right| 
	+ \frac1t \left| \int_0^t \frac{\langle \tfrac12 \mathbb{V}[\su_s[g_t]]- \sA[\su_s[g_t]],\tilde\varphi \rangle }{a_s[g_t]^2}  \d s\right|.
	\label{E:agtbounds BMPI}
	\end{equation}
The second term above can be identified as  $t^{-1} \int_0^t o(\|\su_s[g_t]\|^2) (a_s[g_t])^{-2}\dd s$ by Lemma \ref{GVsmall}, and so converges to $0$ as $t\to \infty$, using \eqref{E:usgupper} and Lemma \ref{lem: a-lowerbd}. For the first term in \eqref{E:agtbounds BMPI}, we note that 
	\[
 \left| \frac{\tfrac12\langle \mathbb{V}[a_s[g_t]\varphi]- \mathbb{V}[\su_s[g_t]],\tilde\varphi\rangle}{a_s[g_t]^2}  \right| 
= \left| \tfrac12\langle \mathbb{V}[\varphi]- \mathbb{V}[{\su_s[g_t]}/{a_s[g_t]}],\tilde\varphi\rangle \right|
\]
which, by Lemma \ref{Vcont}, is bounded above by a finite constant times
\begin{equation}
\Big\|\frac{\su_s[g_t]}{a_s[g_t]}-\varphi\Big\| \leq \frac{C}{a_s[g_t]}\left({\rm e}^{-\varepsilon s}\|1-g_t\| + \int_0^s {\rm e}^{-\varepsilon(s-r)} \|\su_r[g_t]\|\dd r\right).
\label{extrastar}
\end{equation}
Finally using the lower bound Lemma \ref{lem: a-lowerbd} on $a_s[g_t]$, the fact that $\|1-g_t\| \le C/t$ and the upper bound \eqref{E:usgupper}, we obtain that 
\[
\left| \frac{\tfrac12\langle \mathbb{V}[a_s[g_t]\varphi]- \mathbb{V}[\su_s[g_t]],\tilde\varphi\rangle}{a_s[g_t]^2}  \right| 
\le C\left({\rm e}^{-\varepsilon s}+ \frac{1}{t}\right)
\]
for some $C\in (0,\infty)$ and all $s\in (0,t)$. This means that the second term in \eqref{E:agtbounds BMPI} also converges to $0$ as $t\to \infty$. 

From \eqref{eq:at} we have 
\[ \frac{1}{ta_0[g_t]} \to \frac{1}{\Tilde{\theta}} \text { as } t\to \infty,\]
and, hence, we obtain from \eqref{modinverse} that 
\begin{equation} \frac{1}{ta_t[g_t]}\to \frac{\langle \mathbb{V}[\varphi],\Tilde{\varphi}\rangle}{2}+\frac{1}{\Tilde{\theta}}
\label{usedjustbelow}
\end{equation}
as $t\to \infty$.
 
But as we {\color{black}see  from \eqref{extrastar}, \eqref{E:usgupper} and \eqref{usedjustbelow}}, 
\begin{equation*}
    \Big\|\frac{\su_t[g_t]}{a_t[g_t]}-\varphi\Big\| \leq C\left({\rm e}^{-\varepsilon t}+ \frac{1}{t}\right)
\end{equation*}
and therefore 
\begin{equation*}
    \lim_{t\to\infty} \frac{t \langle\mathtt{u}_t[g_t],\mu\rangle}{\langle\varphi,\mu\rangle} = \frac{\Tilde{\theta}}{1+\frac{1}{2}\Tilde{\theta}\langle \mathbb{V}[\varphi],\Tilde{\varphi}\rangle}.
\end{equation*}
This is precisely the proof of \eqref{eq:slutsky BMPI} {\color{black} when we take} $\Tilde{\theta}=\theta\langle f,\Tilde{\varphi}\rangle$, where we have used also the statement of Theorem \ref{thm:survival}.
\hfill $\square$

\section{Proof of Theorem \ref{thm-gamma-dist}}

Once again, we give only the proof in the setting of non-local branching Markov processes, and leave the setting of non-local superprocesses with the assurance that the proof is almost verbatim and that the full proofs can be found in \cite{thesis}.

The proof in the non-local branching Markov process setting is quite long, and so we break it into several steps.

\medskip

\noindent{\bf Step 1:} We start by the considering  necessary and sufficient condition for the existence of a limiting distribution. 
Recalling \eqref{evol-eq-BMPI}, assuming that $\lim_{t\to\infty} \langle \sv_t[{\theta}{f}/{t} ],\mu\rangle = 0$, it is enough to prove that the limit
\begin{equation}\label{limit-gamma}
       \lim_{t\to \infty} \int_0^t {\mathtt{H}}[\sv_{s}[{\theta}f/{t} ]] \dd s
   \end{equation}
converges if and only if 
\begin{equation*}
{\mathtt I} [\varphi] = \alpha \tilde{\mathcal{E}} [\langle \varphi, \Tilde{\mathcal{Z}}\rangle]<\infty.
\end{equation*}

We start by looking for a functional upper bound  for $\sv_{s}[{\theta f}/{t} ]$, for any $s\leq t$. To this end, recall $\Delta_s$ and $\Delta: = \sup_{s\geq0}\Delta_s$ from {(H2)}. We have
    \begin{equation*}
        \sv_{s}[{\theta} f/ {t}](x) \leq \sT_{s}[{\theta}f/{t} ](x) \leq \left(\frac{\theta}{t}\langle  f,\tilde{\varphi}\rangle  + \frac{\theta}{t}\| f\|\Delta_s\right) \varphi(x) \leq \left(\langle 1,\tilde{\varphi}\rangle  + \Delta\right) \frac{\theta}{t}\| f\|\varphi(x),
    \end{equation*}
    where we used Jensen's inequality for the first inequality and {(H2)} for the second. This tells us in particular that 
    \begin{equation}
    \lim_{t\to\infty} \sup_{s\leq t}\langle \sv_s[{\theta}{f}/{t} ],\mu\rangle = 0.
    \label{vt/t}
    \end{equation}

To verify that ${\mathtt I} [\varphi] <\infty$ is a sufficient condition for \eqref{limit-gamma} to hold, notice that $\mathtt{H} [\cdot]$ and $\mathtt{v}_t [\cdot]$ are monotone in the sense that if $f,g\in B^+(E)$ with $f\leq g$, then $\mathtt{H} [f] \leq \mathtt{H} [g]$ and $\mathtt{v}_t [f] \leq \mathtt{v}_t [g]$ for all $t\geq 0$. Thus, by Monotone Convergence Theorem,
\begin{align*}
    \lim_{t\to\infty} \int_0^t {\mathtt{H}}[\sv_{s}[{\theta f}/{t} ]] \dd s &\leq \lim_{t\to\infty} t{\mathtt{H}}\left[\left(\langle 1,\tilde{\varphi}\rangle  + \Delta\right) {\theta}\| f\|\varphi/{t}\right]\notag\\
     &= \lim_{t\to\infty} t \alpha \Tilde{\mathcal{E}} \left[1-\exp\Big(-\frac{1}{t}\left(\langle 1,\tilde{\varphi}\rangle  + \Delta\right) \theta\| f\|\langle \varphi, \Tilde{\mathcal{Z}}\rangle\Big)\right] \\
    &=\left(\langle 1,\tilde{\varphi}\rangle  + \Delta\right) \theta\| f\|\mathtt{I}[\varphi],
\end{align*}
so $\mathtt{I} [\varphi]<\infty$ is a sufficient condition for the existence of the limit distribution.

We also claim that $\mathtt{I} [\varphi]<\infty$ is a necessary condition for the convergence of \eqref{limit-gamma}. Indeed, for all $s > 0$, $x\in E$ and $g\in B^+(E)$, using $\mathrm{e}^{-y}\leq 1-y+\frac{1}{2}y^2$ if $y\geq 0$, we have 
\begin{equation}\label{ineq_non-lin_moment-2}
   \mathrm{e}^{-\mathtt{v}_s [g] (x)} \leq 1 - \mathtt{T}_s [g] (x) + \frac{1}{2} \mathtt{T}_s^{(2)} [g] (x),
\end{equation}
where $\mathtt{T}_s^{(2)} [g] (x) = \mathbb{E}_{\delta_x} [\langle g , X_s \rangle^2]$ is well defined due to assumption {(H1)}; see \cite{GHK}. The asymptotic behaviour at criticality of the first two moments is known due to {(H2)} and Theorem 1 in \cite{GHK}. From those results, we can  state the following inequalities for the moments,
\begin{align}
   \mathtt{T}_s [g] (x) &\geq \left(\langle g,\tilde{\varphi}\rangle  - \|g\|\Delta_s\right) \varphi(x), \label{ineq-moment-1}\\
   \mathtt{T}_s^{(2)} [g] (x) &\leq s\left(\langle g,\tilde{\varphi}\rangle^2 \proint{\mathbb{V}[\varphi]}{\tilde\varphi} + \|g\|^2\Delta_s^{(2)}\right) \varphi(x).\label{ineq-moment-2}
\end{align}
where $\Delta_s^{(2)} = \sup_{x\in E, \, f\in B^+_1(E)} \left|s^{-1}\varphi(x)^{-1}\sT_s^{(2)}\bra{f}(x)-\proint{f}{\tilde{\varphi}}^2{\proint{\mathbb{V}[\varphi]}{\tilde\varphi}}\right|\to 0$ as $s\to \infty .$ Applying the three inequalities to $g=t^{-1}h$ with $t\geq 1$ and $h\in B^+(E)$,  we get
\begin{equation}
   \mathrm{e}^{-\mathtt{v}_s [t^{-1}h] (x)} \leq 1 - \frac{1}{t}\left(\langle h,\tilde{\varphi}\rangle  - \|h\|\Delta_s\right) \varphi(x) +\frac{s}{2t^2}\left(\langle h,\tilde{\varphi}\rangle^2 \proint{\mathbb{V}[\varphi]}{\tilde\varphi} + \|h\|^2\Delta_s^{(2)}\right)\varphi (x).
\end{equation}
For the terms involving $\Delta_s$ and $\Delta_s^{(2)}$, recall that $\lim_{s\to\infty} (\|h\|\Delta_s + \frac{1}{2} \|h\|^2 \Delta_s^{(2)})=0$, so for all $\varepsilon\in (0,\frac{1}{4})$ there exists $t_0\geq 1$ such that $\|h\|\Delta_s + \frac{1}{2} \|h\|^2 \Delta_s^{(2)} \leq  \varepsilon \langle h,\tilde{\varphi}\rangle$ if $s\geq t_0$. For the other terms, choose a constant $k\in (0,1)$ small enough such that $k\langle 1,\tilde{\varphi}\rangle\proint{\mathbb{V}[\varphi]}{\tilde\varphi}\leq 1.$ Then it is clear that $\langle h,\tilde{\varphi}\rangle^2 \proint{\mathbb{V}[\varphi]}{\tilde\varphi} \leq \langle h,\tilde{\varphi}\rangle$ if $\|h\|\leq k$. Combining this, we obtain
\begin{align*}
    \exp(-\mathtt{v}_s [t^{-1}h] (x)) &\leq
    1+\frac{\left(\frac{1}{2}\langle h,\tilde{\varphi}\rangle^2 \proint{\mathbb{V}[\varphi]}{\tilde\varphi}-\langle h,\tilde{\varphi}\rangle\right)+\left(\|h\|\Delta_s + \frac{1}{2} \|h\|^2 \Delta_s^{(2)}\right)}{t}\varphi(x) \\
    &\leq 1+\frac{-\frac{1}{2}\langle h,\tilde{\varphi}\rangle+\varepsilon \langle h,\tilde{\varphi}\rangle}{t}\varphi(x) \leq 1-\frac{\langle h,\tilde{\varphi}\rangle\varphi(x)}{4t},
\end{align*}
for $t\geq s\geq t_0$ and $h\in B^+(E)$ with $\|h\|\leq k$. Let us use this final expression. For all $t\geq s\geq  t_0$ and $f\in B^+(E)$, we have $t^{-1}(k\wedge \theta f)\leq t^{-1} \theta f $ and
\begin{equation*}
   \exp(-\mathtt{v}_s [t^{-1} \theta f] (x)) \leq    \exp(-\mathtt{v}_s [t^{-1}(k\wedge \theta f)] (x)) \leq 1-\frac{\langle k\wedge \theta f,\tilde{\varphi}\rangle\varphi(x)}{4t},
\end{equation*}
which implies
\begin{equation*}
\mathtt{v}_s [{\theta f}/{t}] (x) \geq \frac{\langle k\wedge \theta f,\tilde{\varphi}\rangle\varphi(x)}{4t}.
\end{equation*}
Therefore, 
$$
\int_0^t {\mathtt{H}}[\sv_{s}[{\theta f}/{t} ]] \dd s \geq \int_0^{t_0} {\mathtt{H}}[\sv_{s}[{\theta f}/{t} ]]\dd s + \int_{t_0}^t {\mathtt{H}}\left[\frac{\langle k\wedge \theta f,\tilde{\varphi}\rangle\varphi}{4t}\right] \dd s.
$$
The first term on the right-hand side converges to 0 and the second tends to $\langle k\wedge \theta f,\tilde{\varphi}\rangle \mathtt{I} [\varphi]/{4}$ again by monotone convergence. This shows that ${\mathtt{I}}[\varphi]<\infty$ is also a necessary condition.


\medskip

\noindent{\bf Step 2:}
Next, we compute the explicit limit distribution assuming $\mathtt{I}[\varphi]<\infty$.
Recalling \eqref{evol-eq-SPI}, our starting point is the expression
\begin{align*}
    \mathbb{E}_{\mu} \left[ \exp(-\langle {\theta}f, Y_t\rangle/{t})\right] &=  \mathbb{E}_{\mu} \left[ \exp(-\langle {\theta}f, X_t\rangle/{t})\right] \notag\\
    &\hspace{2cm}\times{\exp }\left(- \int_0^t \alpha \Tilde{\mathcal{E}}\left[1-\prod_{i=1}^{\tilde{N}} \left( 1-\su_s[\mathrm{e}^{-{\theta}f/{t}}](x_i)\right)\right] \dd s\right)
\end{align*}

   We are interested in the second term on the right hand side since, thanks to \eqref{vt/t},  $\mathbb{E}_{\mu} [ \mathrm{e}^{-\theta \langle f, X_t\rangle/t }]\to 1$ as $t\to \infty$. 
   Let us fix $f\in B^+(E)$ and $\theta \in (0, \infty)$ and, recalling \eqref{eq:at},  we choose $t_0>0 $ large enough such that $0\leq a_s[\mathrm{e}^{-{\theta f}/{t}}]\varphi(x) \leq \langle 1-\mathrm{e}^{-{\theta f}/{t}},\Tilde{\varphi}\rangle \|\varphi\| \leq {\theta}\langle f,\Tilde{\varphi}\rangle \|\varphi\|/{t}\leq 1$ for all $t\geq t_0$ and $s\in [0,t]$. Then
   \begin{align}
       \int_0^t \alpha \Tilde{\mathcal{E}}&\left[1-\prod_{i=1}^{\tilde{N}} \left( 1-\su_s[\mathrm{e}^{-{\theta f}/{t}}](x_i)\right)\right] \dd s \notag\\
       &=  \int_{0}^t \alpha \Tilde{\mathcal{E}}\left[\prod_{i=1}^{\tilde{N}} \left( 1-a_s[\mathrm{e}^{-{\theta f}/{t}}]\varphi(x_i)\right)-\prod_{i=1}^{\tilde{N}} \left( 1-\su_s[\mathrm{e}^{-{\theta f}/{t}}](x_i)\right)\right] \dd s \notag\\
       &\quad+ \int_{0}^t \alpha \Tilde{\mathcal{E}}\left[1-\prod_{i=1}^{\tilde{N}} \left( 1-a_s[\mathrm{e}^{-{\theta f}/{t}}]\varphi(x_i)\right)\right] \dd s.
       \label{twoterms}
   \end{align}
   The first term on the right hand side of \eqref{twoterms}  converges to 0 as $t\to\infty$. 
   Indeed, using the deterministic inequality
   \begin{equation*}
       \left|\prod_{i=1}^n(1-z_i) - \prod_{i=1}^n(1-y_i) \right|\leq \sum_{i=1}^n |z_i-y_i|, \qquad z_i,y_i \in [0,1],\, n\in \N,
   \end{equation*}
   we see that an upper bound for said term is
   \begin{align*}
       \int_{0}^t \alpha \Tilde{\mathcal{E}}\left[\langle \left| \su_s[\mathrm{e}^{-{\theta f}/{t}}]-a_s [\mathrm{e}^{-{\theta f}/{t}}] \varphi\right|,\Tilde{\mathcal{Z}}\rangle\right] \dd s &\leq \alpha \Tilde{\mathcal{E}}\left[\langle \varphi,\Tilde{\mathcal{Z}}\rangle\right]\int_{0}^t \left\| \frac{\su_s[\mathrm{e}^{-{\theta f}/{t}}]}{\varphi}-a_s [\mathrm{e}^{-{\theta f}/{t}}] \right\| \dd s.
   \end{align*}
   The statement follows from
   \begin{align}
       \left\| \frac{\su_s[\mathrm{e}^{-{\theta f}/{t}}]}{\varphi}-a_s [\mathrm{e}^{-{\theta f}/{t}}] \right\| &\leq C \left({\rm e}^{-\varepsilon s}\|\tfrac{\theta}{t}f\| + \int_0^s {\rm e}^{-\varepsilon(s-r)}\|\sT_r[\tfrac{\theta}{t}f]\|^2 \d r \right) \nonumber\\
       &\leq C  \left({\rm e}^{-\varepsilon s}\frac{\theta}{t}\|f\| + \frac{\theta^2}{t^2}\|f\|^2 \frac{1}{\varepsilon} \right),
       \label{messyinequality}
   \end{align}
where we used Lemma \ref{lem: iter} and remind the reader that the constant $C>0$ can change its value in the second inequality.

Now, for the second term on the right-hand side of \eqref{twoterms}, we will show that
\begin{align}
   & \lim_{t\to\infty}\int_{0}^t \alpha \Tilde{\mathcal{E}}\left[1-\prod_{i=1}^{\tilde{N}} \left( 1-a_s[\mathrm{e}^{-{\theta f}/{t}}]\varphi(x_i)\right)\right] \dd s\notag\\
   &\hspace{2cm} = \frac{2{\mathtt I}[\varphi]}{\langle  \mathbb V[\varphi], \tilde\varphi \rangle} \log \left(1 + \frac{1}{2} \theta\langle f, \tilde\varphi\rangle \langle  \mathbb V[\varphi], \tilde\varphi \rangle\right) ,
    \label{needto1}
\end{align}
which will then conclude the proof. 

\medskip
\noindent{\bf Step 3:} In this step we will convert the desired limit \eqref{needto1} to an easier to handle   form. 
It is not hard to see that
\begin{align*}
    \Tilde{\mathcal{E}}&\left[1-\prod_{i=1}^{\tilde{N}} \left( 1-a_s[\mathrm{e}^{-{\theta f}/{t}}]\varphi(x_i)\right)\right] \\ 
    &= \Tilde{\mathcal{E}}\left[\sum_{i=1}^{\tilde{N}} a_s[\mathrm{e}^{-{\theta f}/{t}}]\varphi(x_i)\int_0^1 \prod_{{\substack{j=1\\ j\neq i}}}^{\tilde{N}}\left( 1-z a_s[\mathrm{e}^{-{\theta f}/{t}}]\varphi(x_j)\right)\dd z\right] 
\end{align*}
by Taylor's remainder theorem, so that
\begin{align*}
&\Tilde{\mathcal{E}}\left[1-\prod_{i=1}^{\tilde{N}} \left( 1-a_s[\mathrm{e}^{-{\theta f}/{t}}]\varphi(x_i)\right)\right]\notag\\
& = \Tilde{\mathcal{E}}\left[\langle a_s[\mathrm{e}^{-{\theta f}/{t}}]\varphi, \Tilde{\mathcal{Z}}\rangle\right] \notag\\ 
&\hspace{1cm}    +\Tilde{\mathcal{E}}\left[\sum_{i=1}^{\tilde{N}} a_s[\mathrm{e}^{-{\theta f}/{t}}]\varphi(x_i)\int_0^1 \left(\prod_{{\substack{j=1\\ j\neq i}}}^{\tilde{N}}\left( 1-z a_s[\mathrm{e}^{-{\theta f}/{t}}]\varphi(x_j)\right)-1\right)\dd z\right].
\end{align*}
Taking into account  the deterministic inequalities
   \begin{equation}\label{ineq:prod}
       0\leq 1-\prod_{i=1}^n(1-z_i)\leq \sum_{i=1}^n z_i, \qquad z_i \in [0,1],\, n\in \N
   \end{equation}
and $0\leq a_s[\exp(-{\theta f}/{t})]  \leq \langle 1-\exp(-{\theta f}/{t}),\Tilde{\varphi}\rangle   \leq {\theta}\langle f,\Tilde{\varphi}\rangle  /{t}$, we get
\begin{align*}
    \Tilde{\mathcal{E}}&\left[\sum_{i=1}^{\tilde{N}} a_s[\mathrm{e}^{-{\theta f}/{t}}]\varphi(x_i)\int_0^1 \left|\prod_{{\substack{j=1\\ j\neq i}}}^{\tilde{N}}\left( 1-z a_s[\mathrm{e}^{-{\theta f}/{t}}]\varphi(x_j)\right)-1\right|\dd z\right] \\
    &\leq \Tilde{\mathcal{E}}\left[\sum_{i=1}^{\tilde{N}} a_s[\mathrm{e}^{-{\theta f}/{t}}]\varphi(x_i)\sum_{{\substack{j=1\\ j\neq i}}}^{\tilde{N}}a_s[\mathrm{e}^{-{\theta f}/{t}}]\varphi(x_j) \mathbf{1}_{\{\langle \varphi, \Tilde{\mathcal{Z}}\rangle <\sqrt{t}\}}\right] + a_s[\mathrm{e}^{-{\theta f}/{t}}]\Tilde{\mathcal{E}}\left[\langle \varphi, \Tilde{\mathcal{Z}}\rangle \mathbf{1}_{\{\langle \varphi, \Tilde{\mathcal{Z}}\rangle \geq \sqrt{t}\}}\right] \\
    &\leq\frac{\theta^2}{t^2} \langle f, \Tilde{\varphi}\rangle^2\sqrt{t} \Tilde{\mathcal{E}}\left[\langle \varphi, \Tilde{\mathcal{Z}}\rangle \mathbf{1}_{\{\langle \varphi, \Tilde{\mathcal{Z}}\rangle <\sqrt{t}\}}\right] + \frac{\theta^2}{t}\langle f, \Tilde{\varphi}\rangle\Tilde{\mathcal{E}}\left[\langle \varphi, \Tilde{\mathcal{Z}}\rangle \mathbf{1}_{\{\langle \varphi, \Tilde{\mathcal{Z}}\rangle \geq \sqrt{t}\}}\right]
\end{align*}
Integrating the previous expression over $s\in[0,t]$, we see that it converges to $0$. Consequently,
\begin{align*}
    \lim_{t\to\infty}\int_{0}^t \alpha \Tilde{\mathcal{E}}\left[1-\prod_{i=1}^{\tilde{N}} \left( 1-a_s[\mathrm{e}^{-{\theta f}/{t}}]\varphi(x_i)\right)\right] \dd s = 
    \lim_{t\to\infty}\int_{0}^t \alpha \Tilde{\mathcal{E}}\left[\langle a_s[\mathrm{e}^{-{\theta f}/{t}}]\varphi, \Tilde{\mathcal{Z}},\rangle\right] \dd s,
\end{align*}
which means that in order to prove \eqref{needto1}, it is sufficient to show that
\begin{equation}\label{lim-int-a-BPI}
       \lim_{t\to \infty} \int_{0}^t a_s[\mathrm{e}^{-{\theta f}/{t}}] \dd s = \frac{2}{\langle  \mathbb V[\varphi], \tilde\varphi \rangle} \log \left(1 + \frac{1}{2} \theta\langle f, \tilde\varphi\rangle \langle  \mathbb V[\varphi], \tilde\varphi \rangle\right).
\end{equation}

   \medskip

\noindent{\bf Step 4:} To prove \eqref{lim-int-a-BPI}, 
let us momentarily fix $f\in B^+(E)$ such that $\inf_{x\in E} f(x) >0$.
From the definition of $a_s[\cdot]$ in \eqref{eq:at},  in a similar spirit to \eqref{modinverse} and \eqref{E:agtbounds BMPI}, we have 
   \begin{equation*}
       \frac{1}{a_s[\mathrm{e}^{-{\theta f}/{t}}]}-\frac{1}{a_0[\mathrm{e}^{-{\theta f}/{t}}]} 
       =   \int_0^s \frac{\langle \mathtt{A}[\mathtt{u}_r[\mathrm{e}^{-{\theta f}/{t}}]],\tilde{\varphi}\rangle}{a_r[\mathrm{e}^{-{\theta f}/{t}}]^2}\dd r
       = \langle \tfrac12 \mathbb{V}[\varphi],\tilde\varphi \rangle s + F_1[f](s,t) + F_2[f](s,t)
   \end{equation*}
and using $a_0[\mathrm{e}^{-{\theta f}/{t}}] = \langle 1- \mathrm{e}^{-{\theta f}/{t} },\tilde\varphi\rangle$
 we easily obtain
\begin{equation}\label{eq-a_s BMPI}
       a_s[\mathrm{e}^{-{\theta f}/{t}}] = \frac{1}{ \langle 1- \mathrm{e}^{-{\theta f}/{t} },\tilde\varphi\rangle^{-1} + \langle \tfrac12 \mathbb{V}[\varphi],\tilde\varphi \rangle s + F_1[f](s,t) + F_2[f](s,t)},
\end{equation}
where
   \begin{equation*}
       F_1[f](s,t) = \int_0^s \frac{\langle  \mathtt{A}[\mathtt{u}_r[\mathrm{e}^{-{\theta f}/{t}}]]-\frac{1}{2}\mathbb{V}[\mathtt{u}_r[\mathrm{e}^{-{\theta f}/{t}}]],\tilde{\varphi}\rangle}{a_r[\mathrm{e}^{-{\theta f}/{t}}]^2} \dd r,
   \end{equation*}
   \begin{equation*}
       F_2[f](s,t) = \int_0^s \frac{\frac{1}{2}\langle \mathbb{V}[\mathtt{u}_r[\mathrm{e}^{-{\theta f}/{t}}]]-\mathbb{V}[a_r[\mathrm{e}^{-{\theta f}/{t}}]\varphi],\tilde{\varphi}\rangle}{a_r[\mathrm{e}^{-{\theta f}/{t}}]^2} \dd r.
   \end{equation*}

Next, we look at controlling $F_1[f](s,t)$  and $F_2[f](s,t)$. In what follows, the constant $C$ may vary in value from line to line and formula to formula, however its value is not important other than it is strictly positive and finite.
By Lemma \ref{Vcont},
\begin{equation*}
       |F_2[f](s,t)| \leq \int_0^s \frac{1}{2}\langle \left\|\mathbb{V}[{\mathtt{u}_r[\mathrm{e}^{-{\theta f}/{t}}]}/{a_r[\mathrm{e}^{-{\theta f}/{t}}]}]-\mathbb{V}[\varphi]\right\|,\tilde{\varphi}\rangle \dd r \leq C  
     \int_0^s \left\|\frac{\su_r[\mathrm{e}^{-{\theta f}/{t}}]}{a_r[\mathrm{e}^{-{\theta f}/{t}}]} - \varphi \right\| \dd r.
\end{equation*}
From \cite{GHK}, we know that  there exists $C_1>0$ such that $\mathbb{E}^{\varphi}_{\delta_x}[\langle \varphi, X_r \rangle] \leq C_1(1+r)$ (see \cite{GHK}) for all $r\in [0,t]$ and
hence, from 
Lemma \ref{lem: a-lowerbd} 
\[
a_r[\mathrm{e}^{-{\theta f}/{t}}]\ge \frac{C}{C_1(1 +r)+ {\|\varphi\| t}({\theta \inf_{y\in E} f(y)})^{-1} } \ge \frac{C/t}{1+({\theta \inf_{y\in E} f(y)})^{-1} }, 
\]
for some $C>0$.
Combining this with \eqref{messyinequality},
 we see that
\begin{equation}\label{aux-step}
    \left\|\frac{\su_r[\mathrm{e}^{-{\theta f}/{t}}]}{a_r[\mathrm{e}^{-{\theta f}/{t}}]} - \varphi \right\|  \leq  C\left(1+\frac{1}{\theta \inf_{y\in E} f(y)}\right)  \left({\rm e}^{-\varepsilon r}\theta\|f\| + \frac{\theta^2}{t}\|f\|^2 \frac{1}{\varepsilon} \right) \\
\end{equation}
and hence
\begin{equation}
    \int_0^s \left\|\frac{\su_r[\mathrm{e}^{-{\theta f}/{t}}]}{a_r[\mathrm{e}^{-{\theta f}/{t}}]} - \varphi \right\| \dd r \leq  \frac{C}{\varepsilon}\|f\|\left(\theta +\frac{1}{\inf_{y\in E} f(y)}\right)  \left(1 + \theta\|f\| \right),
    \label{intmess}
\end{equation}
where the constant $C>0$ may change from line to line. 
As a consequence we have the bound 
\begin{align}
    |F_2[f](s,t)|&\leq C_{F_2}, \label{bound-F2 BPI}
\end{align}
where $C_{F_2}>0$ is a  constant.

Applying Lemma \ref{GVsmall}, for each $\epsilon > 0$ there exists $t_0>1$ large enough such that
\begin{equation}
    |F_1[f](s,t)| \leq \int_0^s \frac{\langle \|\mathtt{A}[\mathtt{u}_r[\mathrm{e}^{-{\theta f}/{t}}]]-\frac{1}{2}\mathbb{V}[\mathtt{u}_r[\mathrm{e}^{-{\theta f}/{t}}]]\|,\tilde{\varphi}\rangle}{a_r[\mathrm{e}^{-{\theta f}/{t}}]^2} \dd r \leq  
    \epsilon \int_0^s  \left\|\frac{\su_r[\mathrm{e}^{-{\theta f}/{t}}]}{a_r[\mathrm{e}^{-{\theta f}/{t}}]} \right\|^2 \dd r 
    \label{F1est}
\end{equation}
for all $s\in [0,t]$ and $t>t_0$. Using the triangle inequality,
\[\left\|\frac{\su_r[\mathrm{e}^{-{\theta f}/{t}}]}{a_r[\mathrm{e}^{-{\theta f}/{t}}]} \right\|^2 \leq \left\|\frac{\su_r[\mathrm{e}^{-{\theta f}/{t}}]}{a_r[\mathrm{e}^{-{\theta f}/{t}}]} -\varphi\right\|^2 + \|\varphi\|^2\]
and hence, appealing to \eqref{intmess}, we have back in \eqref{F1est} that
\begin{align}
    |F_1[f](s,t)|&\leq \epsilon (C_{F_1} + s) \label{bound-F1 BPI}
\end{align}
for some constant $C_{F_1}>0$.

With \eqref{bound-F2 BPI} and \eqref{bound-F1 BPI} in hand,  the integral in \eqref{lim-int-a-BPI} is bounded below and above (respectively depending on the $\pm$ sign) by
\begin{align*}
   & \int_0^t \frac{\dd s}{\langle 1- \mathrm{e}^{-{\theta f}/{t} },\tilde\varphi\rangle^{-1} + \langle \tfrac12 \mathbb{V}[\varphi],\tilde\varphi \rangle s \pm \left(\epsilon (C_{F_1} + s) + C_{F_2} \right)}\notag\\
   & \hspace{2cm}= \frac{1}
   {\langle \tfrac12 \mathbb{V}[\varphi],\tilde\varphi \rangle \pm \epsilon}
   {\log \left(1 + \frac{\left(\langle \tfrac12 \mathbb{V}[\varphi],\tilde\varphi \rangle \pm \epsilon \right) t}{\langle 1- \mathrm{e}^{-{\theta f}/{t} },\tilde\varphi\rangle^{-1} \pm \left(\epsilon C_{F_1}  + C_{F_2} \right)}\right)}.
\end{align*}
Taking limit as $t \to \infty$, we get
\begin{equation*}
    \frac{\log \left(1 + \theta \langle f, \Tilde{\varphi}\rangle \left(\langle \tfrac12 \mathbb{V}[\varphi],\tilde\varphi \rangle \pm \epsilon \right)\right)}{\langle \tfrac12 \mathbb{V}[\varphi],\tilde\varphi \rangle \pm \epsilon}.
\end{equation*}
Since $\epsilon$ is as small as we want, we achieve the desired limit as in \eqref{lim-int-a-BPI}.

\medskip

\noindent{\bf Step 5:}
To complete the proof of \eqref{lim-int-a-BPI},  we want to remove the assumption that $f\in B^+(E)$ satisfies $\inf_{x\in E} f(x) >0$.
To do this, we define
  \begin{equation*}
      \sF[h] := \lim_{t\to \infty} \int_{0}^t a_s[\mathrm{e}^{-{h}/{t}}] \dd s, \qquad h\in B^+(E).
  \end{equation*}
  {The functional $\mathtt{F}$ is well defined because the integrand of the above integral is non-negative and $\int_{0}^t a_s[\mathrm{e}^{-{h}/{t}}] \dd s \leq \int_{0}^t \langle 1 - \mathrm{e}^{-{h}/{t}},\tilde\varphi\rangle \dd s \leq \langle h ,\tilde\varphi\rangle .$}
  We claim that  $\sF$ is a continuous functional. 
  Indeed,
given $h,g\in B^+(E)$, and taking into account the deterministic inequality $|\mathrm{e}^{-x}-\mathrm{e}^{-y}|\leq |x-y|$ for all $x,y\geq 0$, the claim follows from the fact that
\begin{align*}
    |\sF[h] - \sF[g]| &\leq \lim_{t\to\infty} \int_{0}^t \langle \mathbb{E}_{\delta_\cdot} \left[ |\mathrm{e}^{-\langle {h},X_s\rangle/{t} }-\mathrm{e}^{-\langle {g},X_s\rangle /{t}}|\right],\tilde\varphi\rangle \dd s \\
    &\leq \lim_{t\to\infty}\frac{1}{t} \int_{0}^t \langle \mathtt{T}_s[{\|h-g\|}],\tilde\varphi\rangle \dd s = \langle 1,  \Tilde{\varphi}\rangle \|h-g\|.
\end{align*}

{Now we show} that   \eqref{lim-int-a-BPI} holds for any function $f\in B^+(E)$ {satisfying $\inf_{x\in E} f(x)  = 0$}. By considering a  sequence of functions $f_n=\max (n^{-1},f)$, $n\in\N$, noting that  $\inf_{x\in E} f_n(x)>0$,  $\lim_{n\to\infty} \|f_n-f\| = 0$ and 
\begin{equation*}
      \sF[\theta f_n] = \frac{2}{\langle  \mathbb V[\varphi], \tilde\varphi \rangle} \log \left(1 + \frac{1}{2} \theta\langle f_n, \tilde\varphi\rangle \langle  \mathbb V[\varphi], \tilde\varphi \rangle\right), \qquad n\in\N,
  \end{equation*}
 continuity gives us that $\sF[\theta f] = \lim_{n\to\infty} \sF[\theta f_n]$ and that  \eqref{lim-int-a-BPI} holds for $f\in B^+(E)$ with $\inf_{x\in E} f(x) =0$.

\bigskip

To conclude Steps 2-5, Step 5 ensures that \eqref{lim-int-a-BPI} holds, which ensures that \eqref{needto1}  holds, which, in turn, from \eqref{twoterms} gives us the limiting result
\begin{align*}
    \mathbb{E}_{\mu} \left[ \exp(- {\theta}\langle f, Y_t\rangle/{t})\right]
    = \left(1 +  \theta\frac{1}{2}\langle f, \tilde\varphi\rangle \langle  \mathbb V[\varphi], \tilde\varphi \rangle\right)^{-{2{\mathtt I}[\varphi]}/{\langle  \mathbb V[\varphi], \tilde\varphi \rangle}}
    \end{align*}
    as required.\hfill$\square$

\section{Proof of Theorem \ref{subcrit}}
As usual, we restrict ourselves to the setting of non-local branching Markov processes, noting that  the non-local superprocess setting is almost verbatim {(with $\sV_t$ playing the role of $\sv_t$)}, with a full proof available in \cite{thesis}.


From equations \eqref{eq: semigroup BPI} and \eqref{evol-eq-BMPI}, it is clear that $Y_t \to Y_\infty$ weakly as $t\to\infty$ if, and only if, for all $f\in B^+(E)$, we have $\lim_{t\to\infty} \|\mathtt{v}_t[f]\|<\infty $ and
\begin{equation}\label{cond-stat-dist-BPI}
    \int_0^\infty \mathtt{H}[\mathtt{v}_{s}[f]] \mathrm{d}s < \infty.
\end{equation}

The strategy now is to prove the equivalence between \eqref{cond-stat-dist-BPI} and the integral  condition,
\begin{equation}\label{integral-test-subcrit}
    \int_0^{z_0} \frac{\mathtt{H}[z\varphi]}{z}\mathrm{d}z <\infty \text{ for some } z_0>0.
\end{equation}
We proceed by finding two functions, one that bounds $\mathtt{v}_t[\cdot]$ above and one below. The necessesity and sufficiency   of \eqref{integral-test-subcrit} follows, respectively, from the monotonicity of the immigration mechanism $\mathtt{H}[\cdot]$ applied to said bounding functions.

Let us start by showing that \eqref{integral-test-subcrit} is a sufficient condition. 
For all $t\geq 0$, $f\in B^+(E)$ and $x\in E$, we have
\begin{equation}\label{upper-bound-expectation-semi}
    \mathtt{v}_t[f](x) \leq \mathtt{T}_t[f](x)  \leq \left(\langle 1,\Tilde{\varphi}\rangle +\Delta\right)\|f\|\mathrm{e}^{\lambda t}\varphi(x),
\end{equation}
where the first inequality is due to Jensen's inequality and the second is thanks to {(H2)}.
As a consequence, $\lim_{t\to\infty} \|\mathtt{v}_t[f]\| = 0$ because $\lambda  <0$ by assumption. Moreover, with the change of variables $z = \left(\langle 1,\Tilde{\varphi}\rangle +\Delta\right)\|f\|\mathrm{e}^{\lambda t}$, we get
\[
\int_{0}^\infty \mathtt{H}[\mathtt{v}_t[f]]\mathrm{d}t \leq -\frac{1}{\lambda}\int_0^{\left(\langle 1,\Tilde{\varphi}\rangle +\Delta\right)\|f\|} \frac{\mathtt{H}[z\varphi]}{ z }\mathrm{d}z,
\]
that is, \eqref{integral-test-subcrit}  is a sufficient condition.

Let us now show that \eqref{integral-test-subcrit} is necessary. The counterpart inequalities of \eqref{ineq-moment-1} and \eqref{ineq-moment-2} at subcriticality (see \cite{GHKcorr}) are
\begin{align}
    \mathtt{T}_t [g] (x) &\geq \left(\langle g,\tilde{\varphi}\rangle  - \|g\|\Delta_t\right) \mathrm{e}^{\lambda t}\varphi(x), \label{ineq-moment-1-subcrit}\\
    \mathtt{T}_t^{(2)} [g] (x) &\leq \left( L_2 (g) + \|g\|^2\Delta_t^{(2)}\right)\mathrm{e}^{\lambda t} \varphi(x),\label{ineq-moment-2-subcrit}
\end{align}
for all $t\geq 0$, $g\in B^+(E)$ and $x\in E$, where $\mathtt{T}_s^{(2)} [g] (x) = \mathbb{E}_{\delta_x} [\langle g , X_s \rangle^2]$, $\Delta_t$ is defined in {(H2)} and
$\Delta_t^{(2)} = \sup_{x\in E, \, f\in B^+_1(E)} |\mathrm{e}^{-\lambda t}\varphi(x)^{-1}\sT_t^{(2)}\bra{f}(x)-L_2(f)|$ with 
\begin{equation}\label{def-L_2}
    L_2 (g) = \langle g^2, \Tilde{\varphi}\rangle+\int_0^\infty \mathrm{e}^{-\lambda s}\langle \mathbb{V}[\mathtt{T}_s[g]],\Tilde{\varphi}\rangle \mathrm{d}s.
\end{equation}
Using the upper bound  for the expectation semigroup in \eqref{upper-bound-expectation-semi}, we  obtain $L_2(g) \leq [\langle 1, \Tilde{\varphi}\rangle - \lambda^{-1}(\langle 1, \Tilde{\varphi}\rangle+\Delta)^2 \langle \mathbb{V}[\varphi],\Tilde{\varphi}\rangle]\|g\|^2$. That is, there exists a constant $C>0$ such that 
\begin{equation}\label{upper-bound-L_2}
    L_2 (g) \leq C \|g\|^2, \qquad \text{for all } g\in B^+(E).
\end{equation}

Combining \eqref{ineq_non-lin_moment-2}, \eqref{ineq-moment-1-subcrit}, \eqref{ineq-moment-2-subcrit} and \eqref{upper-bound-L_2}, we have
$$
\mathrm{e}^{-\mathtt{v}_t[g](x)} \leq 1 - \left(\langle g,\Tilde{\varphi}\rangle-\|g\|\Delta_t\right)\mathrm{e}^{\lambda t}\varphi(x) + \frac{1}{2}\left(C + \Delta^{(2)}\right) \|g\|^2\mathrm{e}^{\lambda t}\varphi(x),
$$
where  $\Delta^{(2)} = \sup_{t\geq 0} \Delta_t^{(2)}$, which is finite under {(H1)}; see \cite{GHKcorr}. Let us apply this expression to $g = \mathrm{e}^{\lambda t}f\leq f$, with $f\in B^+(E)$, then
$$
\mathrm{e}^{-\mathtt{v}_t[f](x)} \leq \mathrm{e}^{-\mathtt{v}_t[\mathrm{e}^{\lambda t}f](x)} \leq 1 - \left(\langle f,\Tilde{\varphi}\rangle-\|f\|\Delta_t\right)\mathrm{e}^{2\lambda t}\varphi(x) + \frac{C + \Delta^{(2)}}{2} \|f\|^2\mathrm{e}^{3\lambda t}\varphi(x).
$$
As $\Delta_t\to0$ as $t\to\infty$, we know that for each $f\in B^+(E)$ there exists $t_0>0$ large enough such that for each $t\geq t_0$, 
\[
\|f\|\Delta_t \leq \frac{1}{3}\langle f,\Tilde{\varphi}\rangle \qquad \text{and} \qquad \frac{C + \Delta^{(2)}}{2} \|f\|^2\mathrm{e}^{3\lambda t} \leq \frac{1}{3}\langle f,\Tilde{\varphi}\rangle\mathrm{e}^{2\lambda t}.
\]
Finally, for all $t\geq t_0$, we achieve our desired lower bound, 
$$
\mathrm{e}^{-\mathtt{v}_t[f](x)} \leq 1 - \frac{1}{3}\langle f,\Tilde{\varphi}\rangle\mathrm{e}^{2\lambda t}\varphi(x) \quad \Longrightarrow \quad \mathtt{v}_t[f](x) \geq \frac{1}{3}\langle f,\Tilde{\varphi}\rangle\mathrm{e}^{2\lambda t}\varphi(x).
$$
Under the change of variables  $z = \frac{1}{3}\langle f, \Tilde{\varphi}\rangle \mathrm{e}^{2\lambda t}$, we see that \eqref{integral-test-subcrit} is a necessary condition for \eqref{cond-stat-dist-BPI} due to the fact that
\[
\int_{t_0}^\infty \mathtt{H}[\mathtt{v}_t[f]]\mathrm{d}t \geq -\frac{1}{2\lambda}\int_0^{\frac{1}{3}\langle f,\Tilde{\varphi}\rangle\mathrm{e}^{2\lambda t_0}} \frac{\mathtt{H}[z\varphi]}{ z }\mathrm{d}z.
\]
The last step is the equivalence between \eqref{integral-test-subcrit} and the log moment condition \eqref{log-immig-BMPI}. This follows by  Tonelli's theorem,
\begin{equation*}
    \int_0^{z_0} \frac{\mathtt{H}[z\varphi]}{ z } \mathrm{d}z = \alpha \Tilde{\mathcal{E}} \left[\int_0^{z_0 \langle\varphi, \tilde{\mathcal Z}\rangle} \frac{1-\mathrm{e}^{-y}}{y} \mathrm{d}y\right]< \infty,
\end{equation*}
for some $z_0>0$. The latter holds iff $\Tilde{\mathcal{E}}[\log  (1 + \langle\varphi, \tilde{\mathcal Z}\rangle)]<\infty$. \qed

\begin{remark}\rm
Roughly speaking, paraphrasing Corollary 9.53 in \cite{ZL11} in the setting of non-local Markov branching processes, it states that, for $\sv_t[1](x)$,  uniform (in $x\in E$) exponential bounds (in time) are needed   to ensure that $\Tilde{\mathcal{E}}[\log  (1 + \langle\varphi, \tilde{\mathcal Z}\rangle)]<\infty$ is a necessary and sufficient condition for a stationary distribution to exist. 
We note that the proof above   essentially verifies such exponential temporal bounds {starting at a certain instant $t_0>0$}, albeit they are not uniform in $x\in E$. 
\end{remark}

\section*{Acknowledgments} AEK and EH acknowledge the support of EPSRC programme grant EP/W026899/2. EP acknowledges the support of UKRI Future Leaders Fellowship MR/W008513/1.  VR is grateful for additional financial support  from  CONAHCyT-Mexico, grant nr. 852367. PM-C acknowledges grants FPU20/06588 and EST23/00817, funded by the Spanish Ministry of Universities, and grant PID2019-108211GB-I00, funded by MCIN/AEI/10.13039/501100011033. Part  of this work was undertaken whilst PM-C and VR were visiting the University of Warwick for an extended period. They would like to thank their hosts for partial financial support and for their kindness and hospitality.

\bibliography{references}{}
\bibliographystyle{abbrv}

\end{document}